\title{Weak Proregularity,  Derived Completion, Adic Flatness, and Prisms}
\date{13 April 2021} 
\author{Amnon Yekutieli}
\address{Department of  Mathematics,
Ben Gurion University, Be'er Sheva 84105, Israel}
\email{amyekut@math.bgu.ac.il}
\newtheorem{thm}[equation]{Theorem}
\newtheorem{cor}[equation]{Corollary}
\newtheorem{prop}[equation]{Proposition}
\newtheorem{lem}[equation]{Lemma}
\theoremstyle{definition}
\newtheorem{dfn}[equation]{Definition}
\newtheorem{rem}[equation]{Remark}
\newtheorem{exa}[equation]{Example}
\numberwithin{equation}{section}
\newcommand{\iso}{\xrightarrow{%
\smash{\raisebox{-0.5ex}{\ensuremath{\scriptstyle \simeq  \mspace{2mu}}}}}}
\newcommand{\xar}{\xrightarrow}
\newcommand{\lto}{\leftarrow}
\newcommand{\opn}{\operatorname}
\newcommand{\cat}[1]{\operatorname{\mathsf{#1}}}
\newcommand{\cd}{\mspace{1.3mu}{\cdot}\mspace{1.3mu}}
\newcommand{\rmitem}[1]{\item[\text{\textup{(#1)}}]}
\newcommand{\mfrak}[1]{\mathfrak{#1}}
\newcommand{\mrm}[1]{\mathrm{#1}}
\newcommand{\Ga}{\Gamma}
\newcommand{\La}{\Lambda}
\newcommand{\si}{\sigma}
\newcommand{\de}{\delta}
\newcommand{\ep}{\epsilon}
\newcommand{\p}{\mfrak{p}}
\newcommand{\m}{\mfrak{m}}
\renewcommand{\a}{\mfrak{a}}
\renewcommand{\b}{\mfrak{b}}
\renewcommand{\c}{\mfrak{c}}
\renewcommand{\aa}{\bsym{a}}
\newcommand{\bb}{\bsym{b}}
\renewcommand{\tt}{\bsym{t}}
\renewcommand{\ss}{\bsym{s}}
\newcommand{\K}{\mathbb{K}}
\newcommand{\Q}{\mathbb{Q}}
\newcommand{\Z}{\mathbb{Z}}
\newcommand{\N}{\mathbb{N}}
\newcommand{\tup}[1]{\textup{#1}}
\newcommand{\bsym}[1]{\boldsymbol{#1}}
\newcommand{\ot}{\otimes}
\newcommand{\what}[1]{\widehat{#1}}
\renewcommand{\d}{\mathrm{d}}
\newcommand{\sub}{\subseteq}
\newcommand{\lsp}{\mspace{1.5mu}}
\newcommand{\mbar}[1]{\skew{3.5}\bar{#1}} 
\begin{document}

\begin{abstract}
We begin by recalling the role that {\em weak proregularity} of an 
ideal in a commutative ring has in {\em derived completion} and {\em adic 
flatness}. We also introduce the new concepts of {\em idealistic} and {\em 
sequential} {\em derived completion}, and prove a few results about them, 
including the fact that these two concepts agree iff the ideal is weakly 
proregular. Next we study the {\em local nature of weak proregularity}, and its 
behavior w.r.t.\ {\em ring quotients}. These results allow us to prove our main 
theorem, which states that {\em weak proregularity occurs in the context of 
bounded prisms}. Prisms belong to the new groundbreaking theory of 
{\em perfectoid rings}, developed by Scholze and his collaborators. Since  
perfectoid ring theory makes heavy use of derived completion and adic flatness, 
we anticipate that our results shall help simplify and improve some of the more 
technical aspects of this theory.
\end{abstract}

\maketitle

\tableofcontents

\setcounter{section}{-1}
\section{Introduction}

Let $A$ be a commutative ring. Various authors have previously considered two 
kinds of derived completions of complexes of $A$-modules, without a careful 
distinction between them, leading to some confusion. We begin our paper by 
recalling these two kinds of derived completions, and giving them names. 

The first kind of derived completion is determined by a finitely 
generated ideal $\a \sub A$, and accordingly we call it {\em idealistic derived 
completion}. The idealistic derived completion of a complex of $A$-modules $M$ 
is the complex $\mrm{L} \La_{\a}(M)$, where $\mrm{L} \La_{\a}$ is the left 
derived functor of the $\a$-adic completion functor $\La_{\a}$. 
Idealistic derived completion appeared, perhaps for the first time, in the 
paper \cite{AJL}. It was also studied in \cite{PSY}, among other 
papers. The functor $\mrm{L} \La_{\a}$ is straightforward in its 
definition, yet its properties are not so easy to understand, and often it is  
not as nice as could be expected. See Section 1. 

In section 2 we define {\em sequential derived completion}, associated 
to a finite sequence $\aa = (a_1, \ldots, a_n)$ of elements in $A$. 
The sequence $\aa$ gives rise to a complex of $A$-modules 
$\opn{K}^{\vee}_{\infty}(A; \aa)$,
called the {\em infinite dual Koszul complex}, or the {\em augmented \v{C}ech 
complex}. For a complex of $A$-modules $M$, its 
sequential derived $\aa$-adic completion is the complex 
$\opn{RHom}_{A} \bigl( \opn{K}^{\vee}_{\infty}(A; \aa), M \bigr)$. 
This kind of derived completion was studied in the papers 
\cite{AJL}, \cite{DG}, \cite{KS2} \cite{PSY}, \cite{BS1}, \cite{BS2} and many 
more, as well in Chapter 4 of the book \cite{Lu}, and in  
\href{https://stacks.math.columbia.edu/tag/091N}{Section tag=\texttt{091N}}
of the online resource \cite{SP}. The sequential derived completion is quite 
puzzling; it is not immediately clear how this operation is related to derived 
$\a$-adic completion, where $\a \sub A$ is the ideal generated by $\aa$. Indeed 
the relation is indirect: according to Proposition \ref{prop:316}
(a result that is implicit in \cite{PSY}) there is a 
canonical morphism of triangulated functors 
\begin{equation} \label{eqn:250}
\opn{RHom}_{A} \bigl( \opn{K}^{\vee}_{\infty}(A; \aa), - \bigr)
\to \mrm{L} \La_{\a}  
\end{equation}
from $\cat{D}(A)$ to itself. 
Despite the not-so-obvious definition, the sequential derived completion
functor is quite easy to analyze, and it always has nice properties. 

Besides the idealistic and sequential derived completion functors, there 
are also the {\em idealistic} and {\em sequential derived torsion functors}. 
These are also discussed in Sections 1-2. 

Section 3 contains the definition of {\em weak proregularity} of a
finite sequence $\aa$ of elements in $A$ (it is Definition \ref{dfn:45}). An 
ideal $\a \sub A$ is called weakly proregular (WPR) if it is generated by some 
WPR sequence; and in this case, every finite sequence that generates $\a$ is 
WPR. Weak proregularity of the ideal $\a$ is a subtle weakening of the 
noetherian condition on the ring $A$ (in the sense that when $A$ is noetherian, 
every ideal $\a$ in it is WPR). Though first studied by Grothendieck \cite{LC} 
in the 1960's, the name weak proregularity was coined by Lipman 
\cite[Correction]{AJL} around 1999, and most of the work on it is more recent. 

The next theorem gives a first indication of the importance of weak 
proregularity. Note that our {\em rings are not assumed to be noetherian};
indeed, most results here are uninteresting for noetherian rings. 

\begin{thm} \label{thm:400}
Let $A$ be a commutative ring, let $\aa$ be a finite sequence in $A$, and let 
$\a \sub A$ be the ideal generated by $\aa$. The following two conditions are 
equivalent. 
\begin{itemize}
\rmitem{i} The sequence $\aa$ is weakly proregular. 

\rmitem{ii} The morphism of functors \tup{(\ref{eqn:250})} is an isomorphism. 
\end{itemize}
\end{thm}

In plain words, the ideal $\a$ is WPR iff the idealistic derived $\a$-adic 
completion coincides with the sequential derived $\aa$-adic completion.
This is part of Theorem \ref{thm:165} in the body of the paper. 
The implication (i) $\Rightarrow$ (ii) was proved in \cite{PSY}, and the 
implication (ii) $\Rightarrow$ (i) was very recently proved by Positselski 
\cite{Po}.

In Section 4 we look at the concept of {\em $\a$-adic flatness}. This is a 
variant of the usual notion of flatness, and some texts (e.g.\ \cite{BS2}) call 
it {\em $\a$-complete flatness}. 

Here are two useful theorems about $\a$-adic flatness.

\begin{thm} \label{thm:401}
Let $A$ be a commutative ring, let $\a$ be a weakly proregular ideal in $A$, 
and let $M$ be an $\a$-adically flat $A$-module, with $\a$-adic completion 
$\what{M} = \La_{\a}(M)$. Then the $A$-module $\what{M}$ is $\a$-adically flat. 
\end{thm}

This is Theorem \ref{thm:227} in the body of the paper, and it was proved in 
\cite{Ye2}.

Recall that when the ring $A$ is noetherian, all ideals in it are WPR. 
In this case the two notions of flatness coincide:

\begin{thm} \label{thm:402}
If $A$ is a commutative noetherian ring, $\a$ is an ideal in $A$, and 
$\what{M}$ is an $\a$-adically flat $\a$-adically complete $A$-module,
then $\what{M}$ is a flat $A$-module. 
\end{thm}

This is Theorem \ref{thm:228} in the body of the paper, and it was also proved 
in \cite{Ye2}. This result was used in a recent paper of Cesnavicius and Scholze 
-- see \cite[Lemma 6.2.1]{CS}. 

The following theorem studies the behavior of WPR when passing to quotient 
rings. An element $a \in A$ is called WPR if it is so as a length $1$ sequence. 

\begin{thm} \label{thm:404}
Let $A$ be a commutative ring, and let $a, b \in A$. Assume that $a$ is 
a regular element of $A$, and the image $\bar{b}$ of $b$ in the ring 
$\mbar{A} := A / (a)$ is a weakly proregular element. Then the length $2$ 
sequence $\aa := (a, b)$ in $A$ is weakly proregular.
\end{thm}

This is Theorem \ref{thm:105} in the body of the paper, and its proof is 
quite long. See Remark \ref{rem:320} regarding possible generalizations of this 
theorem. 

The next theorem (it is Theorem \ref{thm:306} in the body of the paper) shows 
that weak proregularity of an ideal $\a \sub A$ is local on $\opn{Spec}(A)$. By 
a covering sequence $\ss = (s_1, \ldots, s_n)$ of $A$ we mean a sequence of 
elements such 
that $\opn{Spec}(A) = \bigcup\nolimits_{i} \opn{Spec}(A_{s_i})$.

\begin{thm} \label{thm:405}
Let $A$ be a commutative ring and let $\a \sub A$ be an ideal. The following 
two conditions are equivalent\tup{:}
\begin{itemize}
\rmitem{i} The ideal $\a$ is weakly proregular.  

\rmitem{ii} There is a covering sequence $\ss = (s_1, \ldots, s_n)$  of $A$, 
such that for every $i$ the ideal $\a_{s_i} \sub A_{s_i}$
is weakly proregular. 
\end{itemize}
\end{thm}

The concept of {\em $p$-adic prism} was defined in the recent paper \cite{BS2} 
by Bhatt and Scholze, and we reproduce it in Definition \ref{dfn:135}.
They also defined {\em bounded prisms} (see Definition \ref{dfn:136}) and 
{\em perfect prisms}.  

Prisms are closely related to {\em perfectoid rings}.
Indeed, if $(A, I)$ is a perfect prism, then the ring $A / I$ is 
perfectoid, and this operation is an equivalence from the category 
of perfect prisms to the category of perfectoid rings. 
There is also a {\em perfection functor} from prisms to perfect prisms. See 
\cite[{Example 1.3$\mrm{(2)}$}]{BS2}. 

The algebraic theory of {\em perfectoid rings}, and its geometric companion 
theory of {\em perfectoid spaces}, have revolutionized arithmetic geometry in 
recent years. Much of the technical part of perfectoid theory involves derived 
completion (in the sequential sense, as defined here) and adic flatness (also 
known as complete flatness). 
Scholze and his coauthors have discovered a variant of our Theorem \ref{thm:401}
for the case of a principal ideal $\a = (a)$, see e.g.\ \cite[Lemma 4.4]{BMS}; 
but the more general result is also needed for applications in this area. 
As already mentioned, our Theorem \ref{thm:402} was already used by Scholze et 
al. 

Here is our main theorem (it is Theorem \ref{thm:135} in the body of the 
paper). It allows the application of all existing results on weakly proregular 
ideals to bounded prisms. 

\begin{thm} \label{thm:144}
Suppose $(A, I)$ is a bounded $p$-adic prism. Then the ideal 
$\a := I + (p) \sub A$ is weakly proregular.
\end{thm}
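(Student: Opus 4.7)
The plan is to verify weak proregularity of $\a = I + (p)$ by exhibiting an explicit weakly proregular generating sequence. Reducing to the orientable case where $I = (d)$ is principal for some distinguished element $d \in A$, so that $\a = (d, p)$, it suffices to check that the two-element sequence $(d, p)$ is weakly proregular via the Koszul criterion: for each $k \ge 1$ one must find $\ell \ge k$ such that every transition map $H_i(K_\bullet(A; d^\ell, p^\ell)) \to H_i(K_\bullet(A; d^k, p^k))$ in positive degrees vanishes.

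The two inputs from the bounded prism hypothesis I would use are: (i) $d$ is a non-zero-divisor in $A$, since $I$ defines a Cartier divisor; and (ii) $A/(d^n)$ has bounded $p^\infty$-torsion for every $n \ge 1$. Property (ii) follows inductively from the boundedness of $A/(d)$: the short exact sequences
\[
0 \to A/(d) \xrightarrow{\,\cdot\, d^{n-1}\,} A/(d^n) \to A/(d^{n-1}) \to 0,
\]
whose left map is injective by (i), yield inductively a bound on $(A/(d^n))[p^\infty]$ for each $n$, via the long exact sequence obtained by taking $p$-power torsion.

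A direct computation with the Koszul complex of length two then gives $H_2(K_\bullet(A; d^k, p^k)) = 0$ (from (i)), together with a natural isomorphism
\[
H_1(K_\bullet(A; d^k, p^k)) \;\cong\; (A/(d^k))[p^k],
\]
under which the transition map to level $k$ from level $\ell$ becomes $\bar y \mapsto p^{\ell - k} \bar y$, viewed as a map into $A/(d^k)$. Given $k$, choose by (ii) an integer $c = c_k$ such that $p^c$ annihilates $(A/(d^k))[p^\infty]$, and set $\ell := k + c$. For $\bar y \in (A/(d^\ell))[p^\ell]$ represented by $y \in A$, the relation $p^\ell y \in (d^\ell) \sub (d^k)$ implies that the image of $\bar y$ in $A/(d^k)$ is $p$-power torsion, hence annihilated by $p^c$; thus $p^{\ell - k} \bar y = 0$ in $A/(d^k)$, verifying the Koszul criterion and establishing weak proregularity of $(d, p)$.

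The main obstacle is property (ii), namely the propagation of bounded $p^\infty$-torsion from $A/(d)$ to every $A/(d^n)$ — this is the essential technical input made available by the prism being bounded, rather than by any formal Koszul manipulation. The initial reduction to the orientable case is a more minor point, handled by passing to a faithfully flat cover of prisms on which $I$ becomes principal and using that weak proregularity behaves well under such base changes.
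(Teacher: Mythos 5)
Your proposal is correct and follows essentially the paper's own route: your propagation of bounded $p$-power torsion from $A/(d)$ to every $A/(d^n)$ via the exact sequences $0 \to A/(d) \to A/(d^n) \to A/(d^{n-1}) \to 0$ is Lemma \ref{lem:105}, and your Koszul computation -- vanishing of $H_2$ by regularity of $d$, the identification $H_1\bigl(\opn{K}(A; d^k, p^k)\bigr) \cong \opn{Ann}_{A/(d^k)}(p^k)$ with transition maps given by reduction modulo $(d^k)$ followed by multiplication by $p^{\ell-k}$, killed by the torsion bound -- is exactly Steps 2--3 of the proof of Theorem \ref{thm:105} (with Proposition \ref{prop:100} translating between bounded torsion and weak proregularity of an element). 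The only divergence is the globalization step: the paper reduces to the principal case via the Zariski covering sequence furnished by the Cartier-divisor condition together with Theorem \ref{thm:306}, whereas you invoke descent of weak proregularity along a faithfully flat cover of prisms -- this is true and provable by the same mechanism (Koszul cohomology commutes with flat base change, a faithfully flat map detects zero morphisms, and Corollary \ref{cor:140} lets you pass from the ideal $\a B$ to the image of a chosen finite generating sequence of $\a$), but you assert it rather than prove it, and the Zariski-local statement is all that the definition of a prism actually requires.
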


To end the Introduction, let us mention three papers that have cited earlier 
versions of our paper. The paper \cite{Po} by Positselski was written, to some 
extent, in response to our present work. It looks at $\a$-adic completion 
(plain and derived) from another perspective, and we recommend reading it.  
Some of the results stated in our paper are either proved, or
explained with precise references, in \cite{Po} (cf.\ Theorem \ref{thm:400} 
above). 

The second reference citing an earlier version of this paper is \cite{Ch} by 
Chatzistamatiou. Proposition 3.1.5 of \cite{Ch} is a variation of our Theorem 
\ref{thm:401}. Roughly speaking, Chatzistamatiou replaces our assumption 
that the ideal $\a$ is WPR by the assumption that $\a = I + (p)$  for a 
$p$-adic prism $(A, I)$, this prism is not necessarily bounded, but the 
ideal $I$ is principal. For a bounded prism (this is the main application in 
\cite{Ch}), his result is a consequence of our Theorems \ref{thm:401} and 
\ref{thm:144}.

The recent paper \cite{Sn} by Schenzel has a large overlap with our  present 
paper. In particular, Schenzel appears to reprove our Theorems \ref{thm:404}, 
\ref{thm:405} and \ref{thm:144}, possibly under slightly modified conditions. 
Schenzel had been exposed to earlier versions of our paper several months ago, 
and we even had some email exchanges about this material.
But it seems Schenzel does not give proper attribution to our work 
in \cite{Sn}.

\medskip 
\noindent
{\em Acknowledgments}. 
The author wishes to thank Leonid Positselski for providing him with many 
helpful suggestions, and for sharing an early version of his paper \cite{Po}. 
Thanks also to Bhargav Bhatt, Johan de Jong, Pierre Schapira, Liran Shaul,
Peter Scholze and Andre Chatzistamatiou for useful comments, to Asaf Yekutieli 
for mentioning the paper \cite{BS2}, and to Daniel Disegni for asking a 
good question at just the right time. Lastly the author thanks one of the 
anonymous referees of an earlier version of the paper, and the anonymous referee 
of the current version, for reading our paper with care and proposing some 
improvements. This work was supported by the Israel Science Foundation grant 
no.\ 824/18.

\section{Idealistic Derived Completion and Torsion} 
\label{sec:idealistic}

Throughout the paper $A$ is a commutative ring.
It is {\em not} assumed that $A$ is a noetherian ring.
We denote by $\cat{M}(A)$ the category of $A$-modules, and by 
$\cat{D}(A)$ its unbounded derived category. 
Our reference for derived categories is the book \cite{Ye3}. 

In this section, $\a$ is a finitely generated ideal of the ring $A$. 

For an $A$-module $M$, its $\a$-adic completion is
\[ \La_{\a}(M) := 
\lim_{\leftarrow k} \, (M \lsp / \lsp \a^{k} \cd M) . \]
Completion is an $A$-linear functor 
\begin{equation} \label{eqn:180}
\La_{\a} : \cat{M}(A) \to \cat{M}(A) , 
\end{equation}
which is neither left nor right exact. 
It is an idempotent functor, in the sense that the two canonical morphisms 
$\La_{\a} \to \La_{\a} \circ \La_{\a}$
are isomorphism (cf.\ \cite[Definition 2.8]{VY}, and Remark \ref{rem:180}).
There is a functorial homomorphism 
$\tau_{\a, M} : M \to \La_{\a}(M)$,
and we call $M$ an {\em $\a$-adically complete module} if $\tau_{\a, M}$ is an 
isomorphism. (Note that older texts used the adjective ``complete and 
separated'' for what we call {complete.)

For an $A$-module $M$ we also have its $\a$-torsion submodule
\[ \Ga_{\a}(M) := \lim_{k \to} \, \opn{Hom}_A(A / \a^{k}, M) . \]
Torsion is a left exact $A$-linear functor 
\begin{equation} \label{eqn:181}
\Ga_{\a} : \cat{M}(A) \to \cat{M}(A) ,
\end{equation}
and it too is idempotent. 
There is a functorial homomorphism 
$\si_{\a, M} : \Ga_{\a}(M) \to M$,
and we call $M$ an {\em $\a$-torsion module} if $\si_{\a, M}$ is an 
isomorphism. 

\begin{dfn} \label{dfn:180}
The {\em idealistic derived $\a$-adic completion functor}
is the functor 
\[ \mrm{L} \La_{\a} : \cat{D}(A) \to \cat{D}(A) , \]
the left derived functor of the functor $\La_{\a}$ from (\ref{eqn:180}).
\end{dfn}

See Remark \ref{rem:210} regarding terminology. 
The left derived functor $\mrm{L} \La_{\a}$ is calculated by K-flat 
resolutions. It's first appearance seems to have been in the paper \cite{AJL}. 
Further study of the functor $\mrm{L} \La_{\a}$ was done in the paper 
\cite{PSY}. Several earlier papers (including \cite{Ma} and \cite{GM}) had 
considered the partial derived functors 
$\mrm{L}_q \La_{\a} = \mrm{H}^{-q} \circ \mrm{L} \La_{\a}$ 
from $\cat{M}(A)$ to itself. 

There is a functorial morphism 
\begin{equation} \label{eqn:182}
\tau_{\a, M}^{\mrm{L}} : M \to \mrm{L} \La_{\a}(M)
\end{equation}
in $\cat{D}(A)$; see \cite[Proposition 3.7]{PSY}.

\begin{dfn} \label{dfn:181} 
A complex $M \in \cat{D}(A)$ is called {\em derived $\a$-adically  
complete in the idealistic sense} if the morphism  
$\tau_{\a, M}^{\mrm{L}}$ in (\ref{eqn:182}) is an isomorphism. 
\end{dfn}

Here are the analogous definitions for $\a$-torsion. 

\begin{dfn} \label{dfn:185}
The {\em idealistic derived $\a$-torsion functor}
is the functor 
\[ \mrm{R} \Ga_{\a} : \cat{D}(A) \to \cat{D}(A) , \]
the right derived functor of the functor $\Ga_{\a}$ from 
(\ref{eqn:181}).
\end{dfn}

The derived functor $\mrm{R} \Ga_{\a}$ is calculated using K-injective 
resolutions. 

The idealistic derived torsion functor $\mrm{R} \Ga_{\a}$ has a long history, 
especially when the ring $A$ is local and $\a = \m$ is its maximal ideal. Then 
the cohomology modules 
$\opn{H}^q(\mrm{R} \Ga_{\m}(M)) = \mrm{R}^q \Ga_{\m}(M)$ are called 
the {\em local cohomologies} of $M$. See \cite{RD} and \cite{LC}. 

There is a functorial morphism 
\begin{equation} \label{eqn:185} 
\si_{\a, M}^{\mrm{R}} : \mrm{R} \Ga_{\a}(M) \to M 
\end{equation}
in $\cat{D}(A)$; see \cite[Proposition 3.10]{PSY}.

\begin{dfn} \label{dfn:186} 
A complex $M \in \cat{D}(A)$ is called {\em derived $\a$-torsion  
in the idealistic sense} if the morphism  
$\si_{\a, M}^{\mrm{R}}$ in (\ref{eqn:185}) is an isomorphism. 
\end{dfn}

The next proposition says that the idealistic derived completion and 
torsion functors depend not on the ideal $\a$, but rather on the closed subset 
in $\opn{Spec}(A)$ that the ideal $\a$ defines. 

\begin{prop} \label{prop:210}
Let $\a$ and $\b$ be finitely generated ideals in $A$ such that 
$\sqrt{\a} = \sqrt{\b}$. Then there are canonical isomorphisms 
$\mrm{L} \La_{\a} \iso \mrm{L} \La_{\b}$ and 
$\mrm{R} \Ga_{\a} \iso \mrm{R} \Ga_{\b}$
of triangulated functors from $\cat{D}(A)$ to itself, such that the diagrams 
\[ \UseTips \xymatrix @C=8ex @R=6ex {
\opn{Id} 
\ar[d]_{\tau^{\mrm{L}}_{\a}}
\ar[dr]^{\tau^{\mrm{L}}_{\b}}
\\
\mrm{L} \La_{\a}
\ar[r]^{\simeq}
&
\mrm{L} \La_{\b}
} 
\qquad \qquad 
\UseTips \xymatrix @C=8ex @R=6ex {
\mrm{R} \Ga_{\a}
\ar[r]^{\simeq}
\ar[dr]_{\si^{\mrm{R}}_{\a}}
&
\mrm{R} \Ga_{\b}
\ar[d]^{\si^{\mrm{R}}_{\b}}
\\
&
\opn{Id} 
} \]
are commutative.  
\end{prop}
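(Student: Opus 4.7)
The plan is to reduce everything to the observation that for finitely generated ideals $\a$ and $\b$ with $\sqrt{\a} = \sqrt{\b}$, the filtrations $\{\a^k\}_{k \geq 1}$ and $\{\b^k\}_{k \geq 1}$ are mutually cofinal as descending chains of ideals in $A$. Indeed, since $\a$ is finitely generated and contained in $\sqrt{\b}$, some power $\a^k$ lies in $\b$; symmetrically some $\b^l$ lies in $\a$. Therefore for every $k \geq 1$ there exist $k', k'' \geq 1$ with $\a^{k'} \sub \b^{k}$ and $\b^{k''} \sub \a^{k}$.

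From this cofinality I would first derive a canonical isomorphism of the underived functors. For any $A$-module $M$, the inverse systems $\{M/\a^k \cd M\}_k$ and $\{M/\b^k \cd M\}_k$ are cofinal, so the projection maps induce a canonical $A$-linear isomorphism $\La_{\a}(M) \iso \La_{\b}(M)$; this is functorial in $M$ and compatible with the canonical maps $\tau_{\a, M}$ and $\tau_{\b, M}$ from $M$. Similarly, the direct systems $\{\opn{Hom}_A(A/\a^{k}, M)\}_k$ and $\{\opn{Hom}_A(A/\b^{k}, M)\}_k$ are cofinal (restriction along $A/\b^{k} \surj A/\a^{k'}$ and vice versa), yielding a canonical functorial isomorphism $\Ga_{\a}(M) \iso \Ga_{\b}(M)$ compatible with $\si_{\a, M}$ and $\si_{\b, M}$. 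So I obtain natural isomorphisms of $A$-linear functors $\La_{\a} \iso \La_{\b}$ on $\cat{M}(A)$, compatible with $\tau$, and $\Ga_{\a} \iso \Ga_{\b}$ on $\cat{M}(A)$, compatible with $\si$.

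The second step is to pass to derived categories by the universal property of derived functors. Since $\mrm{L} \La_{\a}(M)$ is computed by choosing any K-flat resolution $P \to M$ and applying $\La_{\a}$, and likewise for $\mrm{L} \La_{\b}$, the natural isomorphism $\La_{\a} \iso \La_{\b}$ on $\cat{M}(A)$ extends termwise to an isomorphism of complexes $\La_{\a}(P) \iso \La_{\b}(P)$, independent of the choice of $P$ up to unique isomorphism in $\cat{D}(A)$. This yields a triangulated isomorphism $\mrm{L} \La_{\a} \iso \mrm{L} \La_{\b}$. The analogous argument with K-injective resolutions, together with the natural isomorphism $\Ga_{\a} \iso \Ga_{\b}$, gives a triangulated isomorphism $\mrm{R} \Ga_{\a} \iso \mrm{R} \Ga_{\b}$.

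Finally, I would verify the commutativity of the two diagrams. This should be essentially formal: the natural transformations $\tau_{\a}^{\mrm{L}}$ and $\tau_{\b}^{\mrm{L}}$ are induced (via the universal property stated in \cite[Proposition 3.7]{PSY}) from the underived maps $\tau_{\a}$ and $\tau_{\b}$, and we have already checked compatibility of these with the underived isomorphism $\La_{\a} \iso \La_{\b}$. The same applies to $\si_{\a}^{\mrm{R}}$ and $\si_{\b}^{\mrm{R}}$ using \cite[Proposition 3.10]{PSY}. The only mildly delicate point — and the place I would be most careful — is to check that the cofinality isomorphisms at the pre-derived level are truly canonical (do not depend on the choice of exponents $k', k''$ witnessing the inclusions), so that passing to the derived level produces a genuinely canonical isomorphism of functors; this follows because any two cofinal maps of inverse/direct systems induce the same map on the limit/colimit.
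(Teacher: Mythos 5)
Your argument is correct, and its overall skeleton is the same as the paper's: establish an isomorphism of the underived functors on $\cat{M}(A)$, pass to derived functors, and get the commutativity of the triangles from the fact that $\tau^{\mrm{L}}$ and $\si^{\mrm{R}}$ are built from the underived $\tau$ and $\si$ as in \cite[Propositions 3.7 and 3.10]{PSY}. The one place where you genuinely diverge is the first step. The paper does not compare $\La_{\a}$ and $\La_{\b}$ directly; it inserts the auxiliary ideal $\a + \b$ and observes that the \emph{obvious} natural transformations $\La_{\a} \to \La_{\a + \b}$ and $\La_{\b} \to \La_{\a + \b}$ (which exist for any pair of ideals, with no choices made) are isomorphisms when $\sqrt{\a} = \sqrt{\b}$; the canonical isomorphism $\La_{\a} \iso \La_{\b}$ is then the resulting zig-zag, and canonicity is automatic. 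You instead build the comparison directly from cofinality of the filtrations $\{\a^k\}$ and $\{\b^k\}$, which forces you to choose exponents $k'$ with $\a^{k'} \sub \b^{k}$ and then argue independence of the choice; your justification for this (all comparison maps are induced by the identity of $M$ on quotients, so the induced map on the limit, respectively colimit, is choice-free -- and for torsion both $\Ga_{\a}(M)$ and $\Ga_{\b}(M)$ are in fact the same submodule of $M$) is exactly the right point and closes the only delicate gap. The underlying input is identical in both routes (finite generation plus $\sqrt{\a} = \sqrt{\b}$ gives mutual cofinality of the adic filtrations, which is also what makes the paper's ``easy calculation'' work); the paper's device buys canonicity for free at the cost of introducing $\a + \b$, while your direct construction is more explicit but needs the well-definedness check you supplied.
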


\begin{proof}
An easy calculation shows that the obvious morphisms of functors 
$\La_{\a} \to \La_{\a + \b}$ and 
$\La_{\b} \to \La_{\a + \b}$ 
are both isomorphisms. It follows that there are isomorphisms
$\mrm{L} \La_{\a} \iso \mrm{L} \La_{\a + \b}$ 
and
$\mrm{L} \La_{\b} \iso \mrm{L} \La_{\a + \b}$ 
between the left derived functors. Hence there is a canonical isomorphism 
$\mrm{L} \La_{\a} \iso \mrm{L} \La_{\b}$.
The construction of $\tau^{\mrm{L}}_{\a}$ in \cite[Proposition 3.7]{PSY} shows 
that the first diagram above is commutative. 

The proof for derived torsion is similar.
\end{proof}

\begin{rem} \label{rem:210}
In the paper \cite{PSY} the names ``cohomologically $\a$-adically complete 
complex'' and ``cohomologically $\a$-torsion complex'' were used for the 
complexes in Definitions \ref{dfn:181} and \ref{dfn:186} respectively.  
The word ``cohomologically'' was replaced here with ``derived'', because it 
seems to describe the mathematical situation better: the condition in both 
cases is whether a certain morphism in the derived category $\cat{D}(A)$ is an 
isomorphism -- and not about the the cohomology $\opn{H}(M)$.  

The adjective ``idealistic'' was introduced in order to create a semantic 
distinction between the the definitions in this section, and those in the 
next section, which will be called ``sequential''. 
\end{rem}

\begin{rem} \label{rem:180}
We mentioned in passing that when $\a \sub A$ is a finitely generated ideal,
and $M$ is an arbitrary $A$-module, the $\a$-adic completion 
$\what{M} = \La_{\a}(M)$ is an $\a$-adically complete $A$-module. (This is 
included in the assertion that the functor $\La_{\a}$ is idempotent.)
See \cite[Corollary 3.6]{Ye1} for a proof. 

If the ideal $\a$ is not finitely generated, then the completion functor 
$\La_{\a}$ and the torsion functor $\Ga_{\a}$ are ill-behaved. Most disturbing 
is that fact that the functor $\La_{\a}$ could fail to be idempotent -- namely 
there are examples where the completion $\La_{\a}(M)$ is not 
$\a$-adically complete; see \cite[Example 1.8]{Ye1}.
\end{rem}

\begin{rem} \label{rem:250}
If the ideal $\a$ is weakly proregular, then the functor $\mrm{L} \La_{\a}$ is 
idempotent (see \cite[Lemma 7.9 and Proposition 7.10]{PSY}). Conversely, 
Positselski \cite[Proposition 5.3]{Po} shows that 
when $\a$ is not weakly proregular, and $P$ is a free $A$-module of infinite 
rank, then  $\mrm{L} \La_{\a}(P) = \La_{\a}(P)$ is not derived 
$\a$-adically complete in the idealistic sense. This means that without weak 
proregularity the functor $\mrm{L} \La_{\a}$ is not idempotent. 
\end{rem}

\section{Sequential Derived Completion and Torsion}
\label{sec:sequential}

Here again $A$ is a commutative ring. A finite sequence 
$\aa = (a_1, \ldots, a_n)$ of elements of $A$ gives rise to several complexes 
of $A$-modules. 

We start with a single element $a \in A$. For an $A$-module $M$
we denote by 
$\opn{mult}_M(a) : M  \to M$ the endomorphism
$\opn{mult}_M(a)(m) := a \cd m$. 

Recall that the {\em Koszul complex} associated to the element $a$ is 
\begin{equation} \label{eqn:101}
\opn{K}(A; a) := \bigl( \cdots \to 0 \to A \xar{\d} A \to 0 \to  
\cdots \bigr) , 
\end{equation}
concentrated in degrees $-1$ and $0$. 
The differential is $\d := \opn{mult}_A(a)$.
For $j \geq i$ in $\N$ there is a homomorphism of complexes 
\begin{equation} \label{eqn:197}
\mu_{j, i} : \opn{K}(A; a^{j}) \to \opn{K}(A; a^{i}) , 
\end{equation}
which is the identity in degree $0$, and $\opn{mult}_A(a^{j - i})$ in degree 
$-1$. 

Now consider a sequence of elements $\bsym{a} = (a_1, \ldots, a_n)$.
The associated Koszul complex is 
\begin{equation} \label{eqn:190}
\opn{K}(A; \bsym{a}) := 
\opn{K}(A; a_1) \ot_A \cdots \ot_A \opn{K}(A; a_n) . 
\end{equation}
This is a complex of finite rank free $A$-modules, concentrated in degrees 
$-n, \ldots, 0$. For $i \in \N$ let 
$\bsym{a}^i := (a^i_1, \ldots, a^i_n)$.
The homomorphism (\ref{eqn:197}) induces a homomorphism of complexes
$\mu_{j, i} : \opn{K}(A; \bsym{a}^j) \to \opn{K}(A; \bsym{a}^i)$,
making the collection of Koszul complexes 
$\bigl\{  \opn{K}(A; \bsym{a}^i) \bigr\}_{i \in \N}$
into an inverse system.

As an aside, let us mention that the Koszul complex $\opn{K}(A; \aa)$ is a 
{\em commutative DG ring} (in the sense of \cite[Definition 3.3.4]{Ye3}), and 
there is a DG ring homomorphism $A \to  \opn{K}(A; \aa)$. 
The inverse system 
$\bigl\{  \opn{K}(A; \bsym{a}^i) \bigr\}_{i \in \N}$
is of DG $A$-rings. 

The second complex we associate to the sequence $\aa$ is the {\em infinite 
dual Koszul complex}. It is
\begin{equation} \label{eqn:151}
\opn{K}^{\vee}_{\infty}(A; \aa) := 
\lim_{i \to} \, \opn{Hom}_{A} \bigl( \opn{K}(A; \aa^i), A \bigr) .
\end{equation}
This is a complex of flat $A$-modules, concentrated in degrees 
$0, \ldots, n$. 

The DG ring homomorphisms $A \to \opn{K}(A; \aa^i)$ induce, upon dualizing and 
passage to the limit, a homomorphism of complexes 
\begin{equation} \label{eqn:205}
\opn{K}^{\vee}_{\infty}(A; \aa) \to A . 
\end{equation}
We sometimes refer to this homomorphism as the {\em augmentation} of
$\opn{K}^{\vee}_{\infty}(A; \aa)$.

The complex $\opn{K}^{\vee}_{\infty}(A; \aa)$ has an alternative description. 
For a single element $a \in A$, its infinite dual Koszul complex admits this 
canonical isomorphism 
\begin{equation} \label{eqn:191}
\opn{K}^{\vee}_{\infty}(A; a) \cong 
\bigl( \cdots \to 0 \to A \xar{\d} A_a \to 0 \to  
\cdots \bigr) , 
\end{equation}
with $A$ in degree $0$, the localized ring $A_a = A[a^{-1}]$ in degree $1$, 
and the differential $\d$ is the ring homomorphism. For a sequence 
$\bsym{a} = (a_1, \ldots, a_n)$ there is an isomorphism 
\begin{equation} \label{eqn:195}
\opn{K}^{\vee}_{\infty}(A; \aa) \cong 
\opn{K}^{\vee}_{\infty}(A; a_1) \ot_A \cdots \ot_A 
\opn{K}^{\vee}_{\infty}(A; a_n) . 
\end{equation}

The third complex associated to the sequence $\aa$ is the {\em \v{C}ech complex}
$\opn{C}(A; \aa)$. This is also a bounded complex of flat $A$-modules.
This complex is more familiar in its algebro-geometric formulation, as follows.
Let $X := \opn{Spec}(A)$, let $U_i := \opn{Spec}(A[a_i^{-1}])$ be the principal 
affine open set in $X$ defined by the element $a_i$, and let 
$U := \bigcup_i U_i \sub X$. Then $\opn{C}(A; \aa)$ is the 
\v{C}ech complex associated to the affine open covering 
$\{ U_i \}_{i = 1, \ldots, n}$ of $U$. 
So $\opn{C}(A; \aa)$ is a complex of flat $A$-modules, concentrated in 
degrees $0, \ldots, n-1$.   
Actually, $\opn{C}(A; \aa)$ has a canonical structure of noncommutative central 
DG $A$-ring, see \cite[Section 8]{PSY}, and as such it is called the {\em 
derived localization of $A$ w.r.t.\ $\aa$}. 
When $n = 1$ and $a_1 = a$ this is a familiar commutative ring:
$\opn{C}(A; a) = A[a^{-1}]$. 

There is a canonical short exact sequence of complexes of $A$-modules 
\begin{equation} \label{eqn:40}
0 \to \opn{C}(A; \aa)[-1] \to \opn{K}^{\vee}_{\infty}(A; \aa) \to A \to 0 , 
\end{equation}
in which the augmentation (\ref{eqn:205}) occurs.
For this reason, the complex $\opn{K}^{\vee}_{\infty}(A; \aa)$
can also be called the {\em augmented \v{C}ech complex}. 

There is a fourth complex of $A$-modules that's canonically associated to $\aa$.
It is the {\em telescope complex} $\opn{Tel}(A; \aa)$, which is a complex of 
countable rank free $A$-modules, concentrated in degrees $0, \ldots, n$. 
The formula for $\opn{Tel}(A; \aa)$ is elementary but a bit messy; see 
\cite[Definition 5.1]{PSY}.
There is a canonical quasi-isomorphism 
\begin{equation} \label{eqn:166}
w_{\aa} : \opn{Tel}(A; \aa) \to \opn{K}^{\vee}_{\infty}(A; \aa) . 
\end{equation}
See \cite[Lemma 5.7]{PSY}.

The complexes $\opn{K}(A; \aa)$, $\opn{K}^{\vee}_{\infty}(A; \aa)$,
$\opn{C}(A; \aa)$ and $\opn{Tel}(A; \aa)$ are all {\em defined over $\Z$}, in 
the following sense. Consider the polynomial ring $\Z[\tt]$ in the sequence of 
variables $\tt = (t_1, \ldots, t_n)$. As a special case of the constructions 
above, there are complexes of $\Z[\tt]$-modules
$\opn{K}(\Z[\tt]; \tt)$, $\opn{K}^{\vee}_{\infty}(\Z[\tt]; \tt)$,
$\opn{C}(\Z[\tt]; \tt)$ and $\opn{Tel}(\Z[\tt]; \tt)$. The sequence 
$\aa =  (a_1, \ldots, a_n)$ in $A$ determines a ring 
homomorphism $\Z[\tt] \to A$ sending $t_i \mapsto a_i$, and under this 
homomorphism we 
obtain isomorphisms 
\begin{equation} \label{eqn:164}
\begin{aligned}
A \ot_{\Z[\tt]} \opn{K}(\Z[\tt]; \tt) & \iso \opn{K}(A; \aa) 
\\
A \ot_{\Z[\tt]} \opn{K}^{\vee}_{\infty}(\Z[\tt]; \tt) & \iso 
\opn{K}^{\vee}_{\infty}(A; \aa) 
\\
A \ot_{\Z[\tt]} \opn{C}(\Z[\tt]; \tt) & \iso \opn{C}(A; \aa) 
\\
A \ot_{\Z[\tt]} \opn{Tel}(\Z[\tt]; \tt) & \iso \opn{Tel}(A; \aa)
\end{aligned}
\end{equation}
of complexes of $A$-modules.

The next definitions are based on material from several papers and the book 
[LC]. A priori they appear to have no relation to their parallels in the 
previous section, beyond the hint we have in formula (\ref{eqn:215}). 
Understanding that they are indeed what their names suggest (e.g.\ derived 
torsion in Definition \ref{dfn:195}) requires some work. Moreover, these 
definitions agree with those in Section \ref{sec:idealistic} precisely when 
{\em weak proregularity} holds. 

On the other hand, the definitions in this section have the distinct advantage 
of being rather easy to manipulate, and their ``adjunction'',  
``universality'' and ``idempotence'' features are not hard to verify, 
as explained in Remarks \ref{rem:216}, \ref{rem:217} and \ref{rem:251}.

\begin{dfn} \label{dfn:195}
Let $\aa = (a_1, \ldots, a_n)$ be a finite sequence in the ring $A$. 
The {\em sequential derived $\aa$-torsion functor}
is the functor 
\[ \opn{K}^{\vee}_{\infty}(A; \aa) \ot_A (-) : \cat{D}(A) \to \cat{D}(A) . \]
\end{dfn}

The definition makes sense: the complex 
$\opn{K}^{\vee}_{\infty}(A; \aa)$ K-flat, and hence tensoring with it respects 
quasi-isomorphisms.

A hint that this is related to torsion is this: let $\a$ be the ideal in $A$ 
generated by the sequence $\aa$. From formulas (\ref{eqn:191}) and 
(\ref{eqn:195}) it is clear that 
for an $A$-module $M$ there is a canonical isomorphism 
\begin{equation} \label{eqn:215}
\Ga_{\a}(M) \cong \opn{H}^0 \bigl( \opn{K}^{\vee}_{\infty}(A; \aa) \ot_A 
M \bigr) .
\end{equation}

Given a complex of $A$-modules $M$, let 
\begin{equation} \label{eqn:196} 
\si_{\aa, M}^{\mrm{R}} : 
\opn{K}^{\vee}_{\infty}(A; \aa) \ot_A M \to  M 
\end{equation}
be the morphism in $\cat{D}(A)$ arising from the augmentation homomorphism
$\opn{K}^{\vee}_{\infty}(A; \aa) \to A$ in (\ref{eqn:205}). 

\begin{dfn} \label{dfn:196} 
A complex $M \in \cat{D}(A)$ is called {\em derived $\aa$-torsion  
in the sequential sense} if the morphism  
$\si_{\aa, M}^{\mrm{R}}$ in (\ref{eqn:196}) is an isomorphism. 
\end{dfn}

\begin{dfn} \label{dfn:197} 
Let $\aa = (a_1, \ldots, a_n)$ be a finite sequence in the ring $A$. 
The {\em sequential derived $\aa$-adic completion functor}
is the functor 
\[ \opn{RHom}_{A} \bigl( \opn{K}^{\vee}_{\infty}(A; \aa), - \bigr) : 
\cat{D}(A) \to \cat{D}(A) . \]
\end{dfn}

The augmentation of $\opn{K}^{\vee}_{\infty}(A; \aa)$ induces, for every 
$M \in \cat{D}(A)$, a morphism 
\begin{equation} \label{eqn:200} 
\tau_{\aa, M}^{\mrm{L}} : M \to 
\opn{RHom}_{A} \bigl( \opn{K}^{\vee}_{\infty}(A; \aa), M \bigr) . 
\end{equation}

\begin{dfn} \label{dfn:201}  
A complex $M \in \cat{D}(A)$ is called {\em derived $\aa$-adically  
complete in the sequential sense} if the morphism  
$\tau_{\aa, M}^{\mrm{L}}$ in (\ref{eqn:200}) is an isomorphism. 
\end{dfn}

\begin{prop} \label{prop:200} 
Let $\aa = (a_1, \ldots, a_n)$ be a sequence in the ring $A$. 
The following are equivalent for $M \in \cat{D}(A)$.
\begin{itemize}
\rmitem{i} $M$ is derived $\aa$-adically complete in the sequential sense
(Definition \ref{dfn:201}). 

\rmitem{ii} The object 
$\opn{RHom}_A \bigl( \opn{C}(A; \aa), M \bigr)$ 
in $\cat{D}(A)$ is zero. 
\end{itemize}
\end{prop}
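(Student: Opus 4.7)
The plan is to deduce the equivalence from the short exact sequence \tup{(\ref{eqn:40})} by applying $\opn{RHom}_A(-, M)$ and examining the resulting distinguished triangle.

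First, I would observe that all three complexes in \tup{(\ref{eqn:40})} are bounded complexes of flat $A$-modules, so the sequence is termwise split and in particular yields a distinguished triangle
\[ \opn{C}(A; \aa)[-1] \to \opn{K}^{\vee}_{\infty}(A; \aa) \to A \to \opn{C}(A; \aa) \]
in $\cat{D}(A)$, in which the second arrow is the augmentation \tup{(\ref{eqn:205})}.

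Next, I would apply the contravariant triangulated functor $\opn{RHom}_A(-, M)$ to this triangle. Using the canonical identification $\opn{RHom}_A(A, M) \cong M$, one obtains a distinguished triangle
\[ \opn{RHom}_A \bigl( \opn{C}(A; \aa), M \bigr) \to M \xar{\th}
\opn{RHom}_A \bigl( \opn{K}^{\vee}_{\infty}(A; \aa), M \bigr)
\to \opn{RHom}_A \bigl( \opn{C}(A; \aa), M \bigr) [1] \]
in $\cat{D}(A)$. The step I would take most care over is identifying the middle morphism $\th$ with $\tau_{\aa, M}^{\mrm{L}}$ of \tup{(\ref{eqn:200})}; but this is immediate by construction, since both arise by applying $\opn{RHom}_A(-, M)$ to the augmentation \tup{(\ref{eqn:205})}.

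Once $\th = \tau_{\aa, M}^{\mrm{L}}$ is established, the equivalence is a general fact about distinguished triangles: the morphism $\tau_{\aa, M}^{\mrm{L}}$ is an isomorphism if and only if its cone vanishes, and here the cone is (up to shift) $\opn{RHom}_A(\opn{C}(A; \aa), M)$. This gives (i) $\Leftrightarrow$ (ii). The only mildly subtle point is the compatibility check identifying $\th$ with $\tau_{\aa, M}^{\mrm{L}}$; the rest is a routine rotation of a triangle.
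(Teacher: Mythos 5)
Your proof is correct and follows essentially the same route as the paper: apply $\opn{RHom}_A(-,M)$ to the distinguished triangle coming from the short exact sequence \tup{(\ref{eqn:40})}, identify the resulting map out of $M$ with $\tau_{\aa,M}^{\mrm{L}}$, and conclude by the standard fact that a morphism is an isomorphism iff its cone vanishes. The only cosmetic quibble is that invoking flatness to get a termwise split sequence is unnecessary (and flatness alone would not give splitness); any short exact sequence of complexes already yields a distinguished triangle in $\cat{D}(A)$.
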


\begin{proof}
The exact sequence (\ref{eqn:40}) gives a distinguished triangle 
\[  \opn{C}(A; \aa)[-1] \to \opn{K}^{\vee}_{\infty}(A; \aa) \to A 
\xar{\, \triangle \, } \]
in $\cat{D}(A)$. Applying the functor 
$\opn{RHom}_A(-, M)$ to this triangle, we get a new distinguished triangle 
\[ M \xar{\tau_{\aa, M}^{\mrm{L}}}
\opn{RHom}_{A} \bigl( \opn{K}^{\vee}_{\infty}(A; \aa), M \bigr) \to 
\opn{RHom}_A \bigl( \opn{C}(A; \aa), M \bigr) \xar{\, \triangle \, } \]
in $\cat{D}(A)$. By a standard fact on distinguished triangles, the morphism 
$\tau_{\aa, M}^{\mrm{L}}$ is an isomorphism iff the object 
$\opn{RHom}_A \bigl( \opn{C}(A; \aa), M \bigr)$ is zero. 
\end{proof}

\begin{rem} \label{rem:215}
Condition (ii) in Proposition \ref{prop:200} was the definition of a {\em 
cohomologically complete complex} in \cite{KS2}, in the special case when 
$n = 1$, so that, writing $a := a_1$, we have 
$\opn{C}(A; \aa) = A[a^{-1}]$.
\end{rem}

Here is the sequential variant of Proposition \ref{prop:210}.

\begin{prop} \label{prop:215}
Let $\aa$ and $\bb$ be finite sequences of elements of $A$, and let 
$\a$ and $\b$ the ideals of $A$ generated by the sequences $\aa$ and $\bb$ 
respectively. Assume that $\sqrt{\a} = \sqrt{\b}$. Then there is a canonical 
isomorphism 
$\opn{K}^{\vee}_{\infty}(A; \aa) \cong \opn{K}^{\vee}_{\infty}(A; \bb)$
in $\cat{D}(A)$, which respects the augmentations to $A$.

Therefore the sequential derived $\aa$-torsion and $\aa$-adic completion 
functors are isomorphic to the  sequential derived $\bb$-torsion and 
$\bb$-adic completion functors, respectively. 
\end{prop}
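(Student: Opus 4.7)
The plan is to compare both $\opn{K}^{\vee}_{\infty}(A; \aa)$ and $\opn{K}^{\vee}_{\infty}(A; \bb)$ to a common intermediary, namely the concatenated sequence $\cc := (a_1, \ldots, a_n, b_1, \ldots, b_m)$. From the tensor decomposition \eqref{eqn:195} we have $\opn{K}^{\vee}_{\infty}(A; \cc) \cong \opn{K}^{\vee}_{\infty}(A; \aa) \ot_A \opn{K}^{\vee}_{\infty}(A; \bb)$, and tensoring each factor with the augmentation \eqref{eqn:205} produces canonical morphisms
\[ \opn{K}^{\vee}_{\infty}(A; \aa) \ \xleftarrow{\, \mrm{id} \, \ot \, \mrm{aug}_{\bb} \,} \ \opn{K}^{\vee}_{\infty}(A; \cc) \ \xrightarrow{\, \mrm{aug}_{\aa} \, \ot \, \mrm{id} \,} \ \opn{K}^{\vee}_{\infty}(A; \bb) \]
that are both compatible with the augmentations to $A$ (the two compositions to $A$ both equal $\mrm{aug}_{\aa} \ot \mrm{aug}_{\bb}$). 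If I can show both of these morphisms are quasi-isomorphisms, then inverting one in $\cat{D}(A)$ and composing with the other produces the asserted canonical augmentation-preserving isomorphism, and the statement about the sequential derived torsion and completion functors follows immediately from Definitions \ref{dfn:195} and \ref{dfn:197}. By symmetry I need only treat the left arrow.

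To handle the left arrow, I exploit that $\sqrt{\a} = \sqrt{\b}$ forces $\bb \sub \sqrt{\a}$. Applying the (K-flat) functor $\opn{K}^{\vee}_{\infty}(A; \aa) \ot_A (-)$ to the distinguished triangle
$\opn{C}(A; \bb)[-1] \to \opn{K}^{\vee}_{\infty}(A; \bb) \to A \xar{\, \triangle \,}$
arising from \eqref{eqn:40}, the left arrow becomes a quasi-isomorphism if and only if
\[ \opn{K}^{\vee}_{\infty}(A; \aa) \ot_A \opn{C}(A; \bb) \ \simeq \ 0 \quad \text{in } \cat{D}(A). \]
The complex $\opn{C}(A; \bb)$ is bounded, and each of its terms is a finite direct sum of localizations of the form $A_S := A[b_{j_1}^{-1}, \ldots, b_{j_k}^{-1}]$ with $b_{j_\ell} \in \bb$. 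A standard spectral-sequence argument on the resulting bounded bicomplex then reduces the task to showing $\opn{K}^{\vee}_{\infty}(A; \aa) \ot_A A_S \simeq 0$ for each such localization. By the base-change isomorphism \eqref{eqn:164} this complex is $\opn{K}^{\vee}_{\infty}(A_S; \aa)$; and because each inverted element $b_{j_\ell} \in \sqrt{\a}$ becomes a unit in $A_S$, the image of $\a$ in $A_S$ is the unit ideal.

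The whole problem thus reduces to the lemma: \emph{if a finite sequence $\aa = (a_1, \ldots, a_n)$ in a ring $R$ generates the unit ideal, then $\opn{K}^{\vee}_{\infty}(R; \aa) \simeq 0$ in $\cat{D}(R)$}. For this, write $1 = \sum_i c_i a_i$; raising to the power $n(k-1) + 1$ and applying pigeonhole to the multinomial expansion yields $1 \in (a_1^k, \ldots, a_n^k)$ for every $k \geq 1$. The Koszul complex $\opn{K}(R; \aa^k)$ is a bounded complex of finite free $R$-modules whose cohomology is annihilated by the (unit) ideal $(\aa^k) = R$ and hence vanishes; being a bounded acyclic complex of projectives, it is contractible, and therefore so is its $R$-linear dual $\opn{Hom}_R(\opn{K}(R; \aa^k), R)$. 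Passing to the filtered colimit in \eqref{eqn:151} and using exactness of filtered colimits, the cohomology of $\opn{K}^{\vee}_{\infty}(R; \aa)$ is the colimit of zero modules, and the lemma is proved. The step I expect to require the most care is the spectral-sequence argument passing from termwise acyclicity of the bicomplex $\opn{K}^{\vee}_{\infty}(A; \aa) \ot_A \opn{C}(A; \bb)$ to acyclicity of its total complex; this is routine but uses the boundedness of $\opn{C}(A; \bb)$ essentially.
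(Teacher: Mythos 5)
Your proposal is correct, but it follows a genuinely different route from the paper. The paper simply cites \cite[Theorem 6.2]{PSY}, which asserts that the telescope complexes $\opn{Tel}(A; \aa)$ and $\opn{Tel}(A; \bb)$ are (canonically, augmentation-compatibly) homotopy equivalent, and then transports this to $\opn{K}^{\vee}_{\infty}$ via the quasi-isomorphism $w_{\aa}$ of (\ref{eqn:166}); the content is thus outsourced, and what one gets is a statement already at the level of homotopy equivalences of K-projective complexes. You instead give a self-contained derived-category argument: pass through the concatenation $\cc = \aa \smallsmile \bb$, use the tensor decomposition (\ref{eqn:195}) to produce the zigzag $\opn{K}^{\vee}_{\infty}(A;\aa) \leftarrow \opn{K}^{\vee}_{\infty}(A;\cc) \to \opn{K}^{\vee}_{\infty}(A;\bb)$, and reduce the quasi-isomorphism claim, via the triangle coming from (\ref{eqn:40}) and the base-change formula (\ref{eqn:164}), to the vanishing of $\opn{K}^{\vee}_{\infty}(R;\aa)$ when $\aa$ generates the unit ideal, which you prove correctly (each $a_i^k$ acts null-homotopically on $\opn{K}(R;\aa^k)$, so that complex is acyclic, hence contractible, hence so is its dual, and filtered colimits are exact); the radical hypothesis enters exactly where it should, to make the inverted Čech elements land in $\sqrt{\a}$. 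This is essentially the standard ``Čech complex depends only on the radical'' argument, and it buys self-containedness and transparency about where $\sqrt{\a}=\sqrt{\b}$ is used, at the cost of producing only a zigzag isomorphism in $\cat{D}(A)$ rather than the sharper chain-level homotopy equivalence (canonical up to homotopy) that the telescope-complex route provides; for the stated corollary about the sequential derived torsion and completion functors, your weaker conclusion suffices, since both functors only depend on the K-flat complex $\opn{K}^{\vee}_{\infty}$ up to isomorphism in $\cat{D}(A)$, with augmentation compatibility checked exactly as you do via $\mathrm{aug}_{\aa} \ot \mathrm{aug}_{\bb}$.
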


\begin{proof}
In \cite[Theorem 6.1]{PSY} it is proved that the complexes 
$\opn{Tel}(A; \aa)$ and $\opn{Tel}(A; \bb)$
are homotopy equivalent. An inspection of the proof of 
\cite[Theorem 6.1]{PSY} shows that this homotopy equivalence is canonical, and 
it respects the augmentations of the telescope complexes to $A$ (up to 
homotopy). The quasi-isomorphism (\ref{eqn:166}) lets us translate these facts 
to isomorphisms in $\cat{D}(A)$. 
\end{proof}

\begin{rem} \label{rem:216}
The sequential derived functors are insensitive to the ring $A$, or are 
universal, in the following sense. Consider derived completion. Since there is 
the canonical quasi-isomorphism (\ref{eqn:166}), and since $\opn{Tel}(A; \aa)$ 
is a K-projective complex, the sequential derived completion of a complex $M$ 
is 
$\opn{Hom}_A \bigl( \opn{Tel}(A; \aa), M \bigr) \in \cat{D}(A)$.
However, the telescope complex is defined over $\Z$, as shown in the last 
formula in (\ref{eqn:164}). This means that when we pass to the derived 
category $\cat{D}(\Z[\tt])$ by the restriction functor, we get 
\[ \opn{Hom}_A \bigl( \opn{Tel}(A; \aa), M \bigr) \cong 
\opn{Hom}_{\Z[\tt]} \bigl( \opn{Tel}(\Z[\tt]; \tt), M \bigr) \in 
\cat{D}(\Z[\tt]) . \]
\end{rem}

\begin{rem} \label{rem:217}
The sequential derived completion and torsion functors are adjoint to each 
other, for a trivial reason. This is an easy version of {\em Greenlees-May 
Duality}. 

Once more, we present these functors using the 
telescope complex $T := \opn{Tel}(A; \aa)$, which is a bounded complex of 
free $A$-modules, canonically quasi-isomorphic to 
$\opn{K}^{\vee}_{\infty}(A; \aa)$. 
Then the sequential derived $\aa$-adic completion of a complex $M$ is 
$\opn{Hom}_A(T, M)$, and the sequential derived $\aa$-torsion of $M$ is 
$T \ot_A M$.  

An easy calculation using Hom-tensor adjunction shows that 
for for arbitrary $M, N \in \cat{D}(A)$ there is a 
canonical isomorphism 
\[ \opn{Hom}_{\cat{D}(A)} (T \ot_A M, N) \cong 
\opn{Hom}_{\cat{D}(A)}(M, \opn{Hom}_A(T,  N)) . \]
\end{rem}

\begin{rem} \label{rem:251}
Another nice property of the sequential derived completion functor is that it 
is always {\em idempotent}. Using the notation $T := \opn{Tel}(A; \aa)$
of the previous remark, the augmentation homomorphism $T \to A$ 
induces two homomorphisms of complexes 
$T \ot_A T \to T$; and according to \cite[Lemma 7.9]{PSY} these are homotopy 
equivalences. Therefore for every $M \in \cat{D}(A)$ the two morphisms 
\[ \opn{Hom}_A(T, M) \to \opn{Hom}_A(T, \opn{Hom}_A(T, M)) \]
in $\cat{D}(A)$ are isomorphisms.
\end{rem}

\begin{rem} \label{rem:220}
Positselski has a much deeper understanding of various aspects of completion 
and  derived completion, including some intermediate operations between what we 
call the idealistic and the sequential derived completions. These are 
summarized in his paper \cite{Po}, where precise references can be found.  

For instance, fix a finite sequence $\aa = (a_1, \ldots, a_n)$ in $A$.
An $A$-module $M$ that is derived $\aa$-adically complete as a 
complex (in the sequential sense) is called an {\em $\aa$-contramodule}. 
For $M \in \cat{M}(A)$ to be an $\aa$-contramodule it is necessary and 
sufficient that $\opn{Ext}^q_A(A[a_i^{-1}], M) = 0$ for all 
$q = 0, 1$ and $i = 1, \ldots, n$. 

Let $\cat{M}_{\aa \tup{-ctra}}(A)$
be the full subcategory of $\cat{M}(A)$ on the $\aa$-contramodules. 
Positselski proves that $\cat{M}_{\aa \tup{-ctra}}(A)$
is a full abelian subcategory, closed under extensions. 
Another feature of contramodules is this: a complex $M \in \cat{D}(A)$ is 
derived $\aa$-adically complete (in the sequential sense) iff all its 
cohomology modules $\opn{H}^q(M)$ are $\aa$-contramodules. 

Here is one more consequence of Positselski's methods. Let $\a$ be the ideal 
generated by $\aa$. He proves that every complex $M$ that is derived 
$\a$-adically complete in the idealistic sense is also derived 
$\aa$-adically complete in the sequential sense; but not vice versa. See 
\cite[Lemma 5.1 and Proposition 5.3]{Po}.
\end{rem}

\section{Weak Proregularity: When Idealistic and Sequential Agree}
 
We continue with the commutative ring $A$.
In this section we recall the definition of {\em weak
proregularity}, and explain some of its useful properties. 

An inverse system of modules $\{ N_i \}_{i \in \N}$ is called
{\em pro-zero} if for every $i$ there is some $j \geq i$ such that the 
homomorphism $N_j \to N_i$ is zero. See Remark \ref{rem:300} regarding this 
notion. 

Given a sequence $\aa = (a_1, \ldots, a_n)$ in $A$, and 
a natural number $i$, we let 
$\bsym{a}^i := (a^i_1, \ldots, a^i_n)$.
As explained in Section \ref{sec:sequential}, the collection of Koszul 
complexes 
$\bigl\{ \opn{K}(A; \bsym{a}^i)) \bigr\}_{i \in \N}$
is an inverse system. 

\begin{dfn} \label{dfn:45}
A finite sequence $\bsym{a}$ in the ring $A$ is called {\em weakly proregular} 
(WPR) if for every $q < 0$ the inverse system of $A$-modules 
$\bigl\{  \opn{H}^q( \opn{K}(A; \bsym{a}^i)) \bigr\}_{i \in \N}$
is pro-zero.
\end{dfn}

The condition in Definition \ref{dfn:45} had already appeared in \cite{LC}, but 
the name was given much later, by Lipman, see \cite[Correction]{AJL}. 

\begin{dfn} \label{dfn:115}
An ideal $\a \sub A$ is called a {\em weakly proregular ideal} 
if it is generated by some weakly proregular sequence $\aa$.
\end{dfn}

In particular, a WPR ideal $\a$ is finitely generated. 
The next fact was already observed by Grothendieck:

\begin{thm}[{\cite[Lemma 2.4]{LC}}] \label{thm:10}
If the ring $A$ is noetherian, then every finite sequence in it is WPR. 
\end{thm}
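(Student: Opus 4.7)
I would prove Theorem \ref{thm:10} by induction on the length $n$ of the sequence $\aa = (a_1, \ldots, a_n)$, reducing everything to the single-element case, which is a direct consequence of the ascending chain condition for annihilator ideals.

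For the base case $n = 1$, write $a := a_1$. The Koszul complex $\opn{K}(A; a^i)$ is concentrated in degrees $-1, 0$, with the only nontrivial cohomology in negative degree being $\opn{H}^{-1}(\opn{K}(A; a^i)) = \opn{ann}_A(a^i)$. Moreover, the transition map $\mu_{j,i}$ in (\ref{eqn:197}) induces on $\opn{H}^{-1}$ the homomorphism $\opn{ann}_A(a^j) \to \opn{ann}_A(a^i)$ given by $x \mapsto a^{j-i} \cd x$. Since $A$ is noetherian, the ascending chain $\opn{ann}_A(a) \sub \opn{ann}_A(a^2) \sub \cdots$ stabilizes at some $N \in \N$. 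For any $i$, taking $j := i + N$, every $x \in \opn{ann}_A(a^j) = \opn{ann}_A(a^N)$ satisfies $a^N \cd x = 0$, so the induced map $\opn{ann}_A(a^j) \to \opn{ann}_A(a^i)$ is zero. This proves pro-zeroness for $n = 1$.

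For the inductive step, write $\aa = (\aa', a_n)$ with $\aa' := (a_1, \ldots, a_{n-1})$. Using the tensor factorization (\ref{eqn:190}) we have
\[ \opn{K}(A; \aa^i) \cong \opn{K}(A; {\aa'}^i) \ot_A \opn{K}(A; a_n^i) , \]
which gives, for each $i$, a short exact sequence of complexes
\[ 0 \to \opn{K}(A; {\aa'}^i)[1] \xar{a_n^i} \opn{K}(A; {\aa'}^i)[1] \to \opn{K}(A; \aa^i) \to 0 , \]
and thereby a long exact cohomology sequence expressing $\opn{H}^q(\opn{K}(A; \aa^i))$ as an extension between pieces of $\opn{H}^{q+1}(\opn{K}(A; {\aa'}^i))$ and $\opn{H}^q(\opn{K}(A; {\aa'}^i))$ modulo multiplication by $a_n^i$. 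Since pro-zeroness is preserved under taking sub- and quotient systems (with varying indices) and under extensions within short exact sequences of inverse systems, the inductive hypothesis applied to $\aa'$ reduces the problem to showing that the kernel/cokernel systems coming from multiplication by $a_n^i$ on $\{ \opn{H}^q(\opn{K}(A; {\aa'}^i)) \}$ form pro-zero systems in $q < 0$.

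The main technical step, and the place that needs the most care, is precisely this last bookkeeping: one must verify that the transition maps $\mu_{j, i}$ in the combined inverse system are compatible with the long exact sequence in such a way that a sufficiently high re-indexing simultaneously kills the contributions from both ends. I would handle this by reindexing $j$ large enough to absorb (i) the stabilization index $N$ from the base case applied to $a_n$, and (ii) the pro-zero index supplied by the inductive hypothesis for $\aa'$, and then chase a diagram of long exact sequences to conclude that $\opn{H}^q(\opn{K}(A; \aa^j)) \to \opn{H}^q(\opn{K}(A; \aa^i))$ vanishes for all $q < 0$. A cleaner packaging, which I would ultimately prefer, is to observe that since $A$ is noetherian each $\opn{H}^q(\opn{K}(A; \aa^i))$ is a finitely generated $A$-module annihilated by a power of the ideal $(\aa)$; combining this with the description of $\mu_{j,i}$ as (tensor products of) multiplications by $a_k^{j-i}$ lets one produce, for any prescribed $i$, an index $j$ large enough that $\mu_{j,i}$ vanishes on $\opn{H}^q$ for all $q<0$.
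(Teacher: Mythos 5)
The paper itself gives no proof of this theorem -- it is quoted from Grothendieck's seminar \cite{LC} -- so I am measuring your plan against the classical argument (cf.\ \cite{LC}, or \cite[Theorem 4.34]{PSY}). Your skeleton (induction on $n$; base case from the stabilizing chain $\opn{Ann}_A(a) \sub \opn{Ann}_A(a^2) \sub \cdots$; inductive step via the long exact sequence coming from $\opn{K}(A; \aa^i) = \opn{K}(A; {\aa'}^{i}) \ot_A \opn{K}(A; a_n^i)$) is indeed the skeleton of that proof, but the inductive step has a genuine gap at exactly the point where the classical proof has its real content. First, a repairable slip: your displayed sequence is not exact; the correct one is the cone sequence $0 \to \opn{K}(A; {\aa'}^{i}) \to \opn{K}(A; \aa^i) \to \opn{K}(A; {\aa'}^{i})[1] \to 0$, whose long exact sequence gives, for each $i$, a short exact sequence with left term $\opn{H}^q \bigl( \opn{K}(A; {\aa'}^{i}) \bigr) / a_n^i$ and right term $\opn{Ann}_{\opn{H}^{q+1}( \opn{K}(A; {\aa'}^{i}))}(a_n^i)$. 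For $q \leq -2$ both outer systems are handled by the inductive hypothesis, as you say. But for $q = -1$ the right-hand term is $\opn{Ann}_{A / (a_1^i, \ldots, a_{n-1}^i)}(a_n^i)$, because $\opn{H}^0 \bigl( \opn{K}(A; {\aa'}^{i}) \bigr) = A/(a_1^i, \ldots, a_{n-1}^i)$ is \emph{not} pro-zero, and this system -- whose transition maps are ``project, then multiply by $a_n^{j-i}$'', exactly as in diagram (\ref{eqn:127}) -- is covered neither by the inductive hypothesis nor by your base case applied to $a_n$: stabilization of $\opn{Ann}_A(a_n^i)$ in the fixed ring $A$ says nothing about the annihilators of $a_n^i$ in the quotients $A/(a_1^i, \ldots, a_{n-1}^i)$, which vary with $i$. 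Showing that this particular system is pro-zero, i.e.\ that for every $i$ there is $j \geq i$ with $\bigl( (a_1^j, \ldots, a_{n-1}^j) : a_n^j \bigr) \sub \bigl( (a_1^i, \ldots, a_{n-1}^i) : a_n^{j-i} \bigr)$, is where the Artin--Rees lemma (combined with the ACC-stabilization of $\bigl( (a_1^i, \ldots, a_{n-1}^i) : a_n^{k} \bigr)$ in $k$) enters the classical proof; the effort the paper spends in Lemma \ref{lem:105}, on torsion bounds in the quotients $A/(a^{k+1})$ in a far more special situation, is a measure of what is being skipped here.

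Your proposed ``cleaner packaging'' does not close this gap. Over \emph{any} commutative ring the module $\opn{H}^q \bigl( \opn{K}(A; \aa^i) \bigr)$ is annihilated by $(a_1^i, \ldots, a_n^i)$, and $\mu_{j,i}$ is always given degreewise by multiplication by monomials in the $a_k^{j-i}$; since weak proregularity genuinely fails for non-noetherian rings already for $n = 1$, these two facts alone cannot force vanishing. Concretely, for $n = 1$ the image of $\opn{H}^{-1}(\mu_{j,i})$ is $a^{j-i} \cd \opn{Ann}_A(a^j) \sub \opn{Ann}_A(a^i)$, and only the chain condition makes it zero; for $n \geq 2$ the image cycle $\sum_k a_k^{j-i} z_k e_k$ is not visibly an element of a high power of $\a$ times a cycle of $\opn{K}(A; \aa^i)$, so ``finitely generated and annihilated by a power of $\a$'' yields nothing by itself, and the finite generation you invoke is never actually used. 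To have a complete proof you must add the Artin--Rees step above (or quote the classical fact that every finite sequence in a noetherian ring is proregular in the elementwise sense, which implies weak proregularity).
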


The moral is that weak proregularity of an ideal is a generalization of 
the noetherian property of the ring. 
The next results show that WPR is a robust property. 

\begin{thm}[{\cite[Corollary 6.2]{PSY}}] \label{thm:141}
Let $\aa$ and $\bb$ be finite sequences of elements of $A$,
and let $\a$ and $\b$ be the ideals generated by $\aa$ and $\bb$ respectively. 
Assume that $\sqrt{\a} = \sqrt{\b}$. Then the sequence $\aa$ is WPR iff the 
sequence $\bb$ is WPR.
\end{thm}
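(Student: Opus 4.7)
The plan is to establish an intrinsic characterization of weak proregularity of $\aa$ in terms of the quasi-isomorphism type of the infinite dual Koszul complex $\opn{K}^{\vee}_{\infty}(A; \aa)$ viewed as an object of $\cat{D}(A)$, and then to invoke Proposition \ref{prop:215}, which already provides a canonical isomorphism $\opn{K}^{\vee}_{\infty}(A; \aa) \cong \opn{K}^{\vee}_{\infty}(A; \bb)$ in $\cat{D}(A)$ whenever $\sqrt{\a} = \sqrt{\b}$. Once WPR is expressed as a property of that derived object, the theorem becomes a direct transfer.

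The key lemma I would establish is the following: $\aa$ is WPR if and only if for every injective $A$-module $J$ and every $q > 0$, $\opn{H}^q \bigl( \opn{K}^{\vee}_{\infty}(A; \aa) \ot_A J \bigr) = 0$. To prove it I would unwind the left-hand side using the defining formula $\opn{K}^{\vee}_{\infty}(A; \aa) = \lim_{i \to} \opn{Hom}_A \bigl( \opn{K}(A; \aa^i), A \bigr)$, the fact that the tensor product and cohomology both commute with filtered colimits, the identification $\opn{Hom}_A(\opn{K}(A; \aa^i), A) \ot_A J \cong \opn{Hom}_A(\opn{K}(A; \aa^i), J)$ which is valid because each $\opn{K}(A; \aa^i)$ is a bounded complex of finite free $A$-modules, and the exactness of $\opn{Hom}_A(-, J)$ on $\cat{M}(A)$. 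The outcome is the identification
\[ \opn{H}^q \bigl( \opn{K}^{\vee}_{\infty}(A; \aa) \ot_A J \bigr) \cong \lim_{i \to} \, \opn{Hom}_A \bigl( \opn{H}^{-q}(\opn{K}(A; \aa^i)), J \bigr) . \]
The forward implication is immediate: a pro-zero inverse system becomes, after applying $\opn{Hom}_A(-, J)$, an ind-zero direct system, whose filtered colimit vanishes. For the converse, given $q > 0$ and an index $i_0$, I would take $J$ to be the injective hull of $\opn{H}^{-q}(\opn{K}(A; \aa^{i_0}))$; the canonical inclusion is an element of $\opn{Hom}_A(\opn{H}^{-q}(\opn{K}(A; \aa^{i_0})), J)$ that must vanish in the colimit, and injectivity of that inclusion forces some transition map $\opn{H}^{-q}(\opn{K}(A; \aa^{j})) \to \opn{H}^{-q}(\opn{K}(A; \aa^{i_0}))$ to be zero, which is the pro-zero condition at $i_0$.

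Granted the lemma, the theorem follows at once: Proposition \ref{prop:215} identifies $\opn{K}^{\vee}_{\infty}(A; \aa)$ with $\opn{K}^{\vee}_{\infty}(A; \bb)$ in $\cat{D}(A)$, and since both complexes consist of flat $A$-modules they are K-flat, so tensoring either with an injective $J$ yields isomorphic objects of $\cat{D}(A)$ and hence isomorphic cohomology modules. The vanishing criterion in the lemma is therefore equivalent for $\aa$ and for $\bb$. The main obstacle in this proof is the converse direction of the lemma — promoting vanishing of the colimit over every injective module to pro-zeroness of the original inverse system; everything else is a formal transfer through the already-established Proposition \ref{prop:215}. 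A possible alternative, avoiding the injective-hull argument, would be to prove the theorem directly by adjoining one element of $\sqrt{\a}$ to $\aa$ at a time and using the mapping cone long exact sequence for multiplication by the new element to compare Koszul cohomologies; however, this route introduces a delicate treatment of the degree-zero contribution, where the relevant kernel term $\bigl( (\aa^i) : b^i \bigr) / (\aa^i)$ need not be pro-zero without additional input.
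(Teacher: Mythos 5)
Your argument is correct, and it is essentially the proof of the cited source: the paper itself gives no proof beyond referring to \cite[Corollary 6.2]{PSY}, whose argument runs exactly through your two ingredients, namely the characterization of weak proregularity of $\aa$ by the vanishing of $\opn{H}^q\bigl(\opn{K}^{\vee}_{\infty}(A;\aa)\ot_A J\bigr)$ for all $q>0$ and all injective $J$ (this is \cite[Theorem 4.24]{PSY}, reflected in this paper in the equivalence (i)$\Leftrightarrow$(ii) of Theorem \ref{thm:165} applied to injective modules), combined with the radical-invariance of $\opn{K}^{\vee}_{\infty}(A;\aa)$ in $\cat{D}(A)$ from Proposition \ref{prop:215}. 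Your verification of the key lemma (the colimit identification using finite freeness of the Koszul complexes and exactness of $\opn{Hom}_A(-,J)$, and the injective-hull trick for the converse) is sound, and there is no circularity, since Proposition \ref{prop:215} rests only on the telescope homotopy equivalence, which is established in \cite{PSY} without any weak proregularity hypothesis.
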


\begin{cor}[{\cite[Corollary 6.3]{PSY}}] \label{cor:140}
Let $\a$ be a WPR ideal in $A$, and let $\aa$ be a finite sequence of 
elements that generates $\a$. Then $\aa$ is a WPR sequence. 
\end{cor}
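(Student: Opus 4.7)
The plan is to reduce this immediately to Theorem \ref{thm:141} by unpacking Definition \ref{dfn:115}. By definition, saying that the ideal $\a$ is WPR means that there exists \emph{some} finite sequence $\bb = (b_1, \ldots, b_m)$ generating $\a$ which is itself WPR as a sequence. The given sequence $\aa$ also generates $\a$ by hypothesis, so both $\aa$ and $\bb$ generate the \emph{same} ideal $\a$ in $A$.

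Consequently the ideals generated by the two sequences trivially satisfy
\[ \sqrt{(\aa)} \;=\; \sqrt{\a} \;=\; \sqrt{(\bb)} . \]
Theorem \ref{thm:141} asserts precisely that, under this equality of radicals, the WPR property of one sequence is equivalent to the WPR property of the other. Applying this equivalence to our chosen $\bb$ (which is WPR) and to $\aa$, we conclude that $\aa$ is WPR as well.

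There is no real obstacle here: the corollary is just the specialization of Theorem \ref{thm:141} to the case where the two ideals coincide, combined with the existential quantifier in Definition \ref{dfn:115}. All of the work has already been invested in proving that weak proregularity of a sequence depends only on the radical of the ideal it generates; this corollary merely records the resulting independence of the choice of generating sequence.
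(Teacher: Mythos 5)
Your proposal is correct, and it is exactly the intended argument: the paper imports this corollary from \cite[Corollary 6.3]{PSY}, where it is deduced from the radical-invariance statement (Theorem \ref{thm:141}, i.e.\ \cite[Corollary 6.2]{PSY}) in precisely the way you describe, by comparing the given sequence $\aa$ with a WPR generating sequence $\bb$ of $\a$ supplied by Definition \ref{dfn:115}.
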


\begin{prop} \label{prop:310}
Let $g : A \to B$ be a flat ring homomorphism, and let 
$\aa = (a_1, \ldots, a_n)$ be a WPR sequence in $A$.
Then the sequence 
$\bb := (g(a_1), \ldots, g(a_n))$ in $B$ is WPR. 
\end{prop}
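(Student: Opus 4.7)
The plan is to reduce the weak proregularity of $\bb$ in $B$ to that of $\aa$ in $A$ by flat base change of Koszul complexes.

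First I would observe that Koszul complexes behave well under base change: the construction of $\opn{K}(A; \aa^i)$ from $\opn{K}(\Z[\tt]; \tt^i)$ via the first isomorphism in (\ref{eqn:164}), combined with the base change $\Z[\tt] \to A \to B$, yields a canonical isomorphism of complexes of $B$-modules
\[ \opn{K}(B; \bb^i) \cong B \ot_A \opn{K}(A; \aa^i) \]
for every $i \in \N$, compatible with the inverse system structure given by $\mu_{j,i}$.

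Next, since $g : A \to B$ is flat, the functor $B \ot_A (-)$ is exact and therefore commutes with cohomology. Consequently, for each $q < 0$ and each $i$, there is a natural isomorphism
\[ \opn{H}^q \bigl( \opn{K}(B; \bb^i) \bigr) \cong B \ot_A \opn{H}^q \bigl( \opn{K}(A; \aa^i) \bigr), \]
and these isomorphisms are compatible with the transition maps, so they give an isomorphism of inverse systems of $B$-modules
\[ \bigl\{ \opn{H}^q(\opn{K}(B; \bb^i)) \bigr\}_{i \in \N} \cong B \ot_A \bigl\{ \opn{H}^q(\opn{K}(A; \aa^i)) \bigr\}_{i \in \N}. \]

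Finally, I would verify that the pro-zero property is preserved under tensoring. This is immediate from the definition: if for each $i$ there exists $j \geq i$ with transition map $\opn{H}^q(\opn{K}(A; \aa^j)) \to \opn{H}^q(\opn{K}(A; \aa^i))$ equal to zero, then applying $B \ot_A (-)$ (which is a functor) sends this zero map to the zero map on the tensored modules. Since $\aa$ is WPR, the system on the $A$-side is pro-zero for every $q < 0$, hence so is the system on the $B$-side, which proves that $\bb$ is WPR.

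The argument is essentially routine; the only point requiring care is the verification of the base change isomorphism for Koszul complexes and its compatibility with the inverse system maps $\mu_{j,i}$, but this follows at once from the description of $\opn{K}(A;\aa)$ over $\Z[\tt]$ in (\ref{eqn:164}). No delicate obstacle arises, because flatness is a strong enough hypothesis to move cohomology through the tensor product and to preserve the pro-zero condition directly.
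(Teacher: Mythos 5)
Your proof is correct and follows essentially the same route as the paper: the base change isomorphism $\opn{K}(B;\bb^i) \cong B \ot_A \opn{K}(A;\aa^i)$, flatness of $g$ to move cohomology through the tensor product, and the observation that a functor sends zero transition maps to zero, so the pro-zero property passes to the $B$-side. No discrepancies to note.
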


\begin{proof}
For every $i$ there is an isomorphism of complexes 
$\opn{K}(B; \bb^i) \cong B \ot_A \opn{K}(A; \aa^i)$; cf.\ equation 
(\ref{eqn:164}). Thus for every $q$ and $i$ we get canonical isomorphisms
\[ \opn{H}^q( \opn{K}(A; \bb^i)) \cong 
\opn{H}^q( B \ot_A \opn{K}(A; \aa^i)) \cong^{(\dag)} 
B \ot_A \opn{H}^q( \opn{K}(A; \aa^i)) , \]
where $\cong^{(\dag)}$ is due to the flatness of $g$.
Fixing $q < 0$, these are isomorphisms of inverse systems indexed by 
$i \in \N$. Therefore inverse system of $B$-modules 
$\bigl\{  \opn{H}^q( \opn{K}(B; \bsym{b}^i)) \bigr\}_{i \in \N}$
is pro-zero.
\end{proof}

There is also a categorical characterization of the WPR property. 
Following \cite{VY} we make the next definition. Its origins can be traced back 
to  texts on abstract torsion classes. 

\begin{dfn} \label{dfn:215}
Let $F : \cat{M}(A) \to \cat{M}(A)$ be an additive functor. 
\begin{enumerate}
\item An $A$-module $I$ is called {\em right $F$-acyclic} if 
$\mrm{R}^q F(I) = 0$ for all $q > 0$. 

\item The functor $F$ is called {\em stable} if for every injective $A$-module 
$I$, the module $F(I)$ is injective. 

\item The functor $F$ is called {\em weakly stable} if for every injective 
$A$-module $I$, the module $F(I)$ is  right $F$-acyclic. 
\end{enumerate}
\end{dfn}

\begin{thm}[{\cite[Theorem 0.3]{VY}}]  \label{thm:216}
Let $\a$ be a finitely generated ideal in the ring $A$. The conditions below 
are equivalent. 
\begin{itemize}
\rmitem{i} The ideal $\a$ is weakly proregular.

\rmitem{ii} The torsion functor $\Ga_{\a}$ is weakly stable.
\end{itemize}
\end{thm}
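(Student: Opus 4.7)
The plan is to characterize weak proregularity of $\a$ through properties of $\mrm{R} \Ga_{\a}$, and to extract the Koszul pro-zero condition from the behavior of $\mrm{R}^q \Ga_{\a}$ on injective modules and their torsion submodules.

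For the implication (i) $\Rightarrow$ (ii), I would first note that for any injective $A$-module $I$ and every $k$, the groups $\opn{Ext}^q_A(A / \a^k, I)$ vanish for $q > 0$, so passing to the direct limit over $k$ yields $\mrm{R}^q \Ga_{\a}(I) = 0$ for $q > 0$; equivalently, the canonical morphism $\mrm{R} \Ga_{\a}(I) \to \Ga_{\a}(I)$ is an isomorphism in $\cat{D}(A)$. Under the assumption of weak proregularity, the idealistic derived torsion $\mrm{R} \Ga_{\a}$ coincides with the sequential derived torsion $\opn{K}^{\vee}_{\infty}(A; \aa) \ot_A (-)$, and the latter is idempotent by the torsion analogue of Remark \ref{rem:251} (cf.\ \cite[Lemma 7.9]{PSY}). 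Therefore
\[ \mrm{R} \Ga_{\a}(\Ga_{\a}(I)) \cong \mrm{R} \Ga_{\a}(\mrm{R} \Ga_{\a}(I)) \cong \mrm{R} \Ga_{\a}(I) \cong \Ga_{\a}(I) , \]
concentrated in degree zero, so $\Ga_{\a}(I)$ is right $\Ga_{\a}$-acyclic and $\Ga_{\a}$ is weakly stable.

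For the converse (ii) $\Rightarrow$ (i), the strategy is to test weak stability against arbitrary injective modules $J$ and to convert the vanishing of $\mrm{R}^q \Ga_{\a}(\Ga_{\a}(J))$ in positive degrees into the pro-zero condition for $\bigl\{ \mrm{H}^q(\opn{K}(A; \aa^i)) \bigr\}_{i \in \N}$ in negative degrees, where $\aa$ is any finite generating sequence for $\a$. The key tool is the classical equivalence between pro-zero of an inverse system $\{ N_i \}$ and the vanishing of $\varinjlim_i \opn{Hom}_A(N_i, J)$ for every injective $A$-module $J$ (verified by testing against the injective hull of $N_i$). Combining the sequential description of derived torsion via $\opn{K}^{\vee}_{\infty}(A; \aa)$ (available unconditionally) with the direct-limit formula (\ref{eqn:151}) defining this complex, one rewrites the positive-degree cohomology of $\mrm{R} \Ga_{\a}(\Ga_{\a}(J))$ as a direct limit of $\opn{Hom}$-groups built out of the Koszul cohomology modules $\mrm{H}^{q}(\opn{K}(A; \aa^i))$ with $q < 0$. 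Weak stability forces these direct limits to vanish for every injective $J$, and by the cited equivalence the Koszul cohomology system is pro-zero, i.e.\ $\aa$ is WPR.

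The main obstacle is the second implication, whose technical heart is the duality identification linking $\mrm{R}^q \Ga_{\a}(\Ga_{\a}(J))$ to the pro-structure of Koszul cohomology. The delicate issue is to set up a clean comparison between the idealistic computation of $\mrm{R} \Ga_{\a}$ on $\Ga_{\a}(J)$ (via an injective resolution) and the sequential computation (via tensoring with $\opn{K}^{\vee}_{\infty}(A; \aa)$) in such a way that weak stability translates into the pro-zero vanishing; this is where the WPR property emerges as the precise obstruction and what makes the equivalence of \cite[Theorem 0.3]{VY} nontrivial.
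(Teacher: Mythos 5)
Two preliminary remarks. First, the paper itself contains no proof of Theorem \ref{thm:216}: it is quoted verbatim from \cite[Theorem 0.3]{VY}, so your proposal must be judged on its own merits. Second, your direction (i) $\Rightarrow$ (ii) is essentially correct given the results quoted in the paper: for injective $I$ one has $\mrm{R} \Ga_{\a}(I) \cong \Ga_{\a}(I)$ in $\cat{D}(A)$, weak proregularity identifies $\mrm{R} \Ga_{\a}$ with $\opn{K}^{\vee}_{\infty}(A; \aa) \ot_A (-)$ by Theorem \ref{thm:165}, and that functor is idempotent (\cite[Lemma 7.9]{PSY}); in fact under WPR any $\a$-torsion module is right $\Ga_{\a}$-acyclic, since tensoring a torsion module with $\opn{K}^{\vee}_{\infty}(A; \aa)$ kills every positive-degree term.

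The genuine gap is in (ii) $\Rightarrow$ (i). Weak stability is a statement about the \emph{idealistic} side: $\mrm{R}^q \Ga_{\a}(\Ga_{\a}(J)) = \lim_{k \to} \opn{Ext}^q_A(A / \a^{k}, \Ga_{\a}(J))$. The pro-zero criterion you invoke (which is correct) requires instead the vanishing, for every injective $J$ and $q > 0$, of $\lim_{i \to} \opn{Hom}_A \bigl( \opn{H}^{-q}(\opn{K}(A; \aa^i)), J \bigr) \cong \opn{H}^q \bigl( \opn{K}^{\vee}_{\infty}(A; \aa) \ot_A J \bigr)$, i.e.\ a statement about the \emph{sequential} side. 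Your bridging step -- that the positive-degree cohomology of $\mrm{R} \Ga_{\a}(\Ga_{\a}(J))$ ``rewrites'' as that colimit because the sequential description of derived torsion is ``available unconditionally'' -- is exactly where the argument fails: the sequential functor is defined unconditionally, but its agreement with $\mrm{R} \Ga_{\a}$ is not; by Theorem \ref{thm:165} that agreement is \emph{equivalent} to weak proregularity, so using it here is circular. Unconditionally one only has the comparison morphism of Proposition \ref{prop:315}, and no argument is supplied that weak stability forces the colimit of $\opn{Ext}^q_A(A / \a^k, \Ga_{\a}(J))$ and the colimit of $\opn{Hom}_A(\opn{H}^{-q}(\opn{K}(A; \aa^i)), J)$ to vanish together. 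You yourself flag this identification as the ``technical heart'', but that heart is the whole content of the hard implication; in \cite{VY} it is established by a different, genuinely nontrivial analysis of weakly stable idempotent copointed functors, not by the limit interchange sketched here. As written, (ii) $\Rightarrow$ (i) is unproven.
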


The next two propositions describe the relationship between the idealistic and 
sequential derived completion and torsion functors. These results are implicit 
in \cite[Sections 4-5]{PSY}.

\begin{prop} \label{prop:315}
Let $\aa$ be a finite sequence of elements of $A$, and let $\a \sub A$
be the ideal generated by $\aa$. For every complex $M \in \cat{D}(A)$
there is a morphism
\[ v^{\mrm{R}}_{\aa, M} : \mrm{R} \Ga_{\a}(M) \to 
\opn{K}^{\vee}_{\infty}(A; \aa) \ot_A M \]
in $\cat{D}(A)$, which makes the diagram
\[ \UseTips \xymatrix @C=8ex @R=6ex {
\mrm{R} \Ga_{\a}(M)
\ar[r]^(0.4){ v^{\mrm{R}}_{\aa, M} }_(0.4){}
\ar[dr]_{ \si^{\mrm{R}}_{\a, M} }
&
\opn{K}^{\vee}_{\infty}(A; \aa) \ot_A M 
\ar[d]^{ \si^{\mrm{R}}_{\aa, M} }
\\
&
M
} \]
in $\cat{D}(A)$ commutative. This diagram is functorial in $M$. 
\end{prop}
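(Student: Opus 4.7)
The plan is to construct $v^{\mrm{R}}_{\aa, M}$ by producing a natural transformation at the level of modules and then invoking the universal property of $\mrm{R}\Ga_{\a}$ as a right derived functor. The commutativity of the triangle will then reduce to a degree-zero check.

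First I would define, for every $A$-module $N$, a chain map
\[ \eta_N : \Ga_{\a}(N) \to \opn{K}^{\vee}_{\infty}(A; \aa) \ot_A N , \]
with $\Ga_{\a}(N)$ in degree $0$ and the target in degrees $0, \ldots, n$. Since $A$ sits in degree $0$ of $\opn{K}^{\vee}_{\infty}(A; \aa)$, the target's degree-$0$ component is $N$ itself. Any element $m \in \Ga_{\a}(N)$ is killed by a power of each $a_i$, so $m$ maps to zero in each localization $N[a_i^{-1}]$; hence the inclusion $\Ga_{\a}(N) \hookrightarrow N$ lands in the kernel of the degree-$0$ differential. This defines $\eta_N$ functorially in $N$, and by formula (\ref{eqn:215}) it induces an isomorphism on $\opn{H}^0$.

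Next, since $\opn{K}^{\vee}_{\infty}(A; \aa)$ is a bounded complex of flat $A$-modules, it is K-flat, so $\opn{K}^{\vee}_{\infty}(A; \aa) \ot_A (-)$ already descends to an exact endofunctor of $\cat{D}(A)$ without further derivation. Choosing a K-injective resolution $\iota_M : M \to J$, I would identify $\mrm{R} \Ga_{\a}(M) = \Ga_{\a}(J)$ in $\cat{D}(A)$, and $\opn{K}^{\vee}_{\infty}(A; \aa) \ot_A M \cong \opn{K}^{\vee}_{\infty}(A; \aa) \ot_A J$. Then I would define $v^{\mrm{R}}_{\aa, M}$ as the morphism represented by $\eta_J$. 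Independence of the resolution, and functoriality in $M$, follow from the homotopy-uniqueness of K-injective resolutions in the usual way.

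For the triangle, I would unwind \cite[Propositions 3.7 and 3.10]{PSY}: the morphism $\si^{\mrm{R}}_{\a, M}$ is represented by the inclusion $\Ga_{\a}(J) \hookrightarrow J$ followed by the quasi-isomorphism $J \iso M$, while $\si^{\mrm{R}}_{\aa, M}$ is represented by tensoring the augmentation (\ref{eqn:205}) with $J$, which in degree $0$ is the identity on $J$. Composing $v^{\mrm{R}}_{\aa, M}$ with $\si^{\mrm{R}}_{\aa, M}$ therefore gives $\Ga_{\a}(J) \hookrightarrow J \xar{\opn{id}} J$, matching $\si^{\mrm{R}}_{\a, M}$ on the nose. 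The main obstacle is really just bookkeeping: checking that the constructions of $\si^{\mrm{R}}_{\a, M}$ and $\si^{\mrm{R}}_{\aa, M}$ in the cited PSY propositions line up transparently with the chosen resolution $J$, so that the degree-zero computation goes through cleanly.
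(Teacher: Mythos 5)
Your construction is correct and follows essentially the same route as the paper: the paper simply cites \cite[Equation (4.19)]{PSY}, which is exactly your chain map $\eta$ (the inclusion $\Ga_{\a}(J) \to J$ into the degree-$0$ part of $\opn{K}^{\vee}_{\infty}(A; \aa) \ot_A J$ for a K-injective resolution $J$), with the commutativity of the triangle checked at the chain level just as you do. The only point you gloss over is extending $\eta$ from modules to complexes by termwise application and totalization, which is routine.
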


\begin{proof}
This is part of the proof of \cite[Corollary 4.26]{PSY}, so we only recall the 
key points. It is enough to consider a K-injective complex $M = I \in 
\cat{D}(A)$. Then $v^{\mrm{R}}_{\aa, I}$ is represented by the homomorphism of 
complexes
\[  v_{\aa, I} : 
\Ga_{\a}(I) \to \opn{K}^{\vee}_{\infty}(A; \aa) \ot_A I \]
from \cite[Equation (4.19)]{PSY}.
The commutativity of the diagram is explained in loc.\ cit.
\end{proof}

\begin{prop} \label{prop:316}
Let $\aa$ be a finite sequence of elements of $A$, and let $\a \sub A$
be the ideal generated by $\aa$. For every complex $M \in \cat{D}(A)$
there is a morphism
\[ u^{\mrm{L}}_{\aa, M} : 
\opn{RHom}_{A} \bigl( \opn{K}^{\vee}_{\infty}(A; \aa), M \bigr)
\to \mrm{L} \La_{\a}(M) \]
in $\cat{D}(A)$, which makes the diagram
\[ \UseTips \xymatrix @C=8ex @R=6ex {
M
\ar[dr]^{ \tau^{\mrm{L}}_{\a, M} }
\ar[d]_{ \tau^{\mrm{L}}_{\aa, M} }
\\
\opn{RHom}_{A} \bigl( \opn{K}^{\vee}_{\infty}(A; \aa), M \bigr)
\ar[r]_(0.6){ u^{\mrm{L}}_{\aa, M}}^(0.6){}
&
\mrm{L} \La_{\a}(M)
} \]
in $\cat{D}(A)$ commutative. This diagram is functorial in $M$. 
\end{prop}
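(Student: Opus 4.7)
The plan is to build $u^{\mrm{L}}_{\aa, M}$ at the chain level, using a K-flat resolution of $M$ together with the telescope model of $\opn{K}^{\vee}_{\infty}(A; \aa)$, following the construction implicit in \cite[Sections 4-5]{PSY}. First, I would fix a K-flat resolution $\pi : P \to M$, so that $\mrm{L} \La_{\a}(M)$ is canonically represented by $\La_{\a}(P) = \lim_{k} P / \aa^{k} P$ in $\cat{D}(A)$, with $\tau^{\mrm{L}}_{\a, M}$ represented by the canonical chain map $P \to \lim_{k} P / \aa^{k} P$. For the sequential side, I would use the telescope complex $T := \opn{Tel}(A; \aa)$ together with the canonical quasi-isomorphism $w_{\aa}$ of (\ref{eqn:166}); since $T$ is a bounded complex of free modules, hence K-projective, the sequential derived completion $\opn{RHom}_A(\opn{K}^{\vee}_{\infty}(A; \aa), M)$ is represented by $\opn{Hom}_A(T, P)$, and $\tau^{\mrm{L}}_{\aa, M}$ is realized at the chain level by precomposition with the augmentation $T \to A$.

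Next, I would construct $u^{\mrm{L}}_{\aa, M}$ explicitly. The telescope complex $T$ is manufactured from the Koszul inverse system $\{ \opn{K}(A; \aa^{k}) \}_{k \in \N}$, and for each $k$ there is the standard Koszul augmentation $\opn{K}(A; \aa^{k}) \to A / \aa^{k}$, which is the identity in degree $0$ and zero in other degrees. After the appropriate dualization and application to $P$, these augmentations assemble into a compatible family of chain maps whose inverse limit is a morphism $\opn{Hom}_A(T, P) \to \lim_{k} P / \aa^{k} P$, representing the desired $u^{\mrm{L}}_{\aa, M}$ in $\cat{D}(A)$. Surjectivity of the transition maps in $\{ P / \aa^{k} P \}$ ensures that no $\mrm{R}^{1} \lim$ correction enters, so that the target is literally $\La_{\a}(P)$. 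Functoriality in $M$ is inherited from the functoriality of K-flat resolutions up to homotopy.

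Finally, the triangle commutes by a direct chain-level unwinding: both $\tau^{\mrm{L}}_{\a, M}$ and the composition $u^{\mrm{L}}_{\aa, M} \circ \tau^{\mrm{L}}_{\aa, M}$ are represented by the structural completion map $P \to \lim_{k} P / \aa^{k} P$, because precomposing the family-of-augmentations chain map $\opn{Hom}_A(T, P) \to \lim_{k} P / \aa^{k} P$ with $P = \opn{Hom}_A(A, P) \to \opn{Hom}_A(T, P)$ (induced by $T \to A$) simply recovers the structural map of $P$. The main obstacle is the explicit matching between the telescope model of $\opn{K}^{\vee}_{\infty}(A; \aa)$ and the Koszul-quotient inverse system: one must align the finite pieces of $T$ with the $\opn{K}(A; \aa^{k})$ and verify compatibility of the induced maps into $P / \aa^{k} P$ across $k$. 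This bookkeeping is carried out in \cite[Sections 4-5]{PSY}, and the present statement is the completion analogue of Proposition \ref{prop:315}, whose proof was extracted from \cite[Corollary 4.26]{PSY}.
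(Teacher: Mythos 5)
Your proposal is correct and follows essentially the same route as the paper: both reduce to a suitable resolution (you take a K-flat resolution $P \to M$, the paper takes $M = P$ K-projective), replace $\opn{K}^{\vee}_{\infty}(A; \aa)$ by the telescope complex via $w_{\aa}$, and define $u^{\mrm{L}}_{\aa, M}$ by the chain map $\opn{Hom}_A(T, P) \to \La_{\a}(P)$ built from the Koszul augmentations -- which is exactly the map $\opn{tel}_{\aa, P}$ of \cite[Definition 5.16]{PSY} that the paper cites, with the commutativity of the triangle verified by the same chain-level unwinding done in loc.\ cit. The only cosmetic differences are that you reconstruct that telescope-to-completion map and the commutativity check explicitly (modulo the cofinality of the ideals $(a_1^k, \ldots, a_n^k)$ with the powers $\a^k$, which you implicitly use), while the paper simply cites \cite{PSY} for them.
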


\begin{proof}
This is implicit in \cite[Section 5]{PSY}. It is enough to consider a 
K-projective complex $M = P \in \cat{D}(A)$. Recall that the telescope complex 
$T := \opn{Tel}(A; \aa)$ is a K-projective complex. Then 
$u^{\mrm{L}}_{\aa, P}$ is the composition of the isomorphism 
\[ \opn{RHom}_{A}(w_{\aa}, \opn{id}_P) : 
\opn{RHom}_{A} \bigl( \opn{K}^{\vee}_{\infty}(A; \aa), P \bigr) \iso
\opn{RHom}_{A}(T,  P) \cong  \opn{Hom}_{A}(T, P) , \]
where $w_{\aa}$ is the quasi-isomorphism from equation (\ref{eqn:166}), with 
the homomorphism of complexes
\[ \opn{tel}_{\lsp \aa, P} : \opn{Hom}_{A}(T, P) \to \La_{\a}(P)
\cong \mrm{L} \La_{\a}(P) \]
from \cite[Definition 5.16]{PSY}. 
The commutativity of the diagram is proved in loc.\ cit.
\end{proof}

\begin{thm} \label{thm:165} 
Let $\aa$ be a finite sequence in $A$, and let $\a \sub A$ be the ideal 
generated by $\aa$. The following conditions are equivalent. 
\begin{itemize}
\rmitem{i} The sequence $\aa$ is weakly proregular. 

\rmitem{ii} For every complex $M \in \cat{D}(A)$ the morphism
\[ v^{\mrm{R}}_{\aa, M} : \mrm{R} \Ga_{\a}(M) \to 
\opn{K}^{\vee}_{\infty}(A; \aa) \ot_A M  \]
from Proposition \ref{prop:315} is an isomorphism.

\rmitem{iii} For every complex $M \in \cat{D}(A)$ the morphism
\[ u^{\mrm{L}}_{\aa, M} : 
\opn{RHom}_{A} \bigl( \opn{K}^{\vee}_{\infty}(A; \aa), M \bigr)
\to \mrm{L} \La_{\a}(M) \]
from Proposition \ref{prop:316} is an isomorphism.
\end{itemize}
\end{thm}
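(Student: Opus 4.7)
The plan is to prove the three-way equivalence in two stages: first establish (ii) $\Leftrightarrow$ (iii) by a mates-of-adjunctions argument, then prove (i) $\Leftrightarrow$ (ii) by a direct spectral-sequence analysis on K-injective representatives together with an appeal to Theorem \ref{thm:216}. For (ii) $\Leftrightarrow$ (iii), set $T := \opn{Tel}(A; \aa)$, a bounded K-projective complex of free $A$-modules canonically quasi-isomorphic to $\opn{K}^{\vee}_{\infty}(A; \aa)$ via $w_{\aa}$ in (\ref{eqn:166}). By Remark \ref{rem:217} the pair $\bigl( T \ot_A (-),\, \opn{Hom}_A(T, -) \bigr)$ is adjoint on $\cat{D}(A)$; classically $(\mrm{R} \Ga_{\a},\, \mrm{L} \La_{\a})$ is adjoint as well (Greenlees--May duality, cf.\ \cite[Section 7]{PSY}). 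Both $v^{\mrm{R}}_{\aa, M}$ and $u^{\mrm{L}}_{\aa, M}$ arise from the single augmentation $\opn{K}^{\vee}_{\infty}(A; \aa) \to A$ of (\ref{eqn:205}), and the triangle factorizations in Propositions \ref{prop:315} and \ref{prop:316} exhibit them as mates of one another under the two adjunctions. The formal fact that mates are simultaneously natural isomorphisms yields (ii) $\Leftrightarrow$ (iii).

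For (i) $\Rightarrow$ (ii), I replace $M$ by a K-injective representative $I$ and show directly that the homomorphism of complexes $v_{\aa, I} : \Ga_{\a}(I) \to \opn{K}^{\vee}_{\infty}(A; \aa) \ot_A I$ from \cite[Equation (4.19)]{PSY} is a quasi-isomorphism. Since each $\opn{K}(A; \aa^i)$ is a bounded complex of finite free $A$-modules, the target rewrites as $\lim_{i \to} \opn{Hom}_A(\opn{K}(A; \aa^i), I)$. Injectivity of $I$ collapses the spectral sequence $\opn{Ext}^p_A(\opn{H}^{-q}(\opn{K}(A; \aa^i)), I) \Rightarrow \opn{H}^{p+q}(\opn{Hom}_A(\opn{K}(A; \aa^i), I))$, giving $\opn{H}^q(\opn{Hom}_A(\opn{K}(A; \aa^i), I)) \cong \opn{Hom}_A(\opn{H}^{-q}(\opn{K}(A; \aa^i)), I)$. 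The WPR hypothesis says $\{\opn{H}^{-q}(\opn{K}(A; \aa^i))\}_i$ is pro-zero for $q > 0$, so the induced direct system of Homs into $I$ has vanishing transition maps and zero limit; only $q = 0$ contributes, and $\lim_{i \to} \opn{Hom}_A(A/(\aa^i), I) = \Ga_{\a}(I)$ matches the source of $v_{\aa, I}$.

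For (ii) $\Rightarrow$ (i), I appeal to Theorem \ref{thm:216} and verify that (ii) implies weak stability of $\Ga_{\a}$. Fix an injective $A$-module $I$. Since $\Ga_{\a}(I)$ is $\a$-torsion, every element is annihilated by some power of each $a_i$, whence $A[a_i^{-1}] \ot_A \Ga_{\a}(I) = 0$ for each $i$, and therefore $\opn{C}(A; \aa) \ot_A \Ga_{\a}(I) = 0$. The exact sequence (\ref{eqn:40}) then gives $\opn{K}^{\vee}_{\infty}(A; \aa) \ot_A \Ga_{\a}(I) \cong \Ga_{\a}(I)$ concentrated in degree $0$. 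By (ii) this represents $\mrm{R} \Ga_{\a}(\Ga_{\a}(I))$, so $\mrm{R}^q \Ga_{\a}(\Ga_{\a}(I)) = 0$ for $q > 0$; thus $\Ga_{\a}$ is weakly stable, and Theorem \ref{thm:216} yields WPR.

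The main technical obstacle is the (i) $\Rightarrow$ (ii) direction: the delicate interchange between the inverse limit implicit in pro-zero-ness of the Koszul cohomologies and the direct limit defining $\opn{K}^{\vee}_{\infty}(A; \aa)$, together with the use of K-injectivity of $I$ to collapse the Ext spectral sequence, is the technical heart of the argument. This bookkeeping is essentially the content of \cite[Sections 4--5]{PSY}, and is precisely where weak proregularity enters in an essential way.
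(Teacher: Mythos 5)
Your step (ii) $\Leftrightarrow$ (iii) contains the essential gap. The mates argument needs \emph{both} adjunctions, and while $\bigl( T \ot_A (-), \opn{Hom}_A(T,-) \bigr)$ is indeed adjoint for trivial reasons (Remark \ref{rem:217}), the adjunction between the \emph{idealistic} functors $\mrm{R}\Ga_{\a}$ and $\mrm{L}\La_{\a}$ is not a classical fact for an arbitrary finitely generated ideal in an arbitrary commutative ring: Greenlees--May duality in this idealistic form, including \cite[Section 7]{PSY} which you cite, is proved \emph{under the weak proregularity hypothesis}, i.e.\ under condition (i) of the very theorem you are proving. Without WPR the idealistic functors are genuinely pathological (e.g.\ $\mrm{L}\La_{\a}$ fails to be idempotent, Remark \ref{rem:250}), and no such adjunction is available. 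So your (ii) $\Leftrightarrow$ (iii) is circular, and in particular the implication (iii) $\Rightarrow$ (i) is not established by your proposal. This is exactly the direction the paper flags as deep: it rests on a recent theorem of Positselski \cite[Theorem 3.6]{Po}, applied to a free module $P$ of infinite rank (where $\mrm{L}\La_{\a}(P) = \La_{\a}(P)$), and it cannot be obtained by a formal mates/adjunction argument. (Even granting both adjunctions, you would still have to verify that $u^{\mrm{L}}_{\aa,M}$ is literally the mate of $v^{\mrm{R}}_{\aa,M}$, which you only assert.)

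The other two steps are essentially sound but need repair in one place. Your (ii) $\Rightarrow$ (i) via weak stability of $\Ga_{\a}$ and Theorem \ref{thm:216} is correct and is a legitimate variant of the paper's appeal to \cite[Theorem 4.24]{PSY} with $M$ an injective module (add Corollary \ref{cor:140} to pass from the WPR ideal to the given sequence $\aa$). In (i) $\Rightarrow$ (ii), however, the claimed collapse $\opn{H}^q \bigl( \opn{Hom}_A(\opn{K}(A;\aa^i), I) \bigr) \cong \opn{Hom}_A \bigl( \opn{H}^{-q}(\opn{K}(A;\aa^i)), I \bigr)$ is only valid when $I$ is a single injective module; for an arbitrary K-injective complex $I$ the second page is $\opn{Ext}^p_A \bigl( \opn{H}^{-q}(\opn{K}(A;\aa^i)), M \bigr)$ and does not collapse. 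The argument can be repaired by taking $I$ to be K-injective and consisting of injective modules, using the finite filtration coming from the bounded Koszul direction, and passing to the colimit over $i$, where pro-zero-ness kills every row $q>0$ for all $p$ and the $q=0$ row identifies (by cofinality of the ideals $(a_1^i,\ldots,a_n^i)$ with the powers $\a^k$, and compatibly with $v_{\aa,I}$) with $\opn{H}^{\bdot}(\Ga_{\a}(I))$; this is essentially \cite[Sections 4--5]{PSY}, as you note. But even with that fixed, (i) $\Rightarrow$ (iii) and (iii) $\Rightarrow$ (i) remain unproved in your scheme, so the proposal does not yield the full theorem.
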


\begin{proof}
The implication (i) $\Rightarrow$ (ii) is \cite[Corollary 4.26]{PSY}.
The implication (i) $\Rightarrow$ (iii) is \cite[Corollary 5.25]{PSY},
combined with the quasi-isomorphism (\ref{eqn:166}). 

The implication (ii) $\Rightarrow$ (i) is \cite[Theorem 4.24]{PSY}, applied to 
$M := I$, an arbitrary injective $A$-module. 

Finally, the implication (iii) $\Rightarrow$ (i) is a recent result of 
Positselski. One takes $M := P$, a free $A$-module of infinite rank.
Then $\mrm{L} \La_{\a}(P) = \La_{\a}(P)$. 
According to \cite[Theorem 3.6]{Po}, if $u^{\mrm{L}}_{\aa, P}$ is an 
isomorphism in $\cat{D}(A)$, then $\a$ is WPR. 
\end{proof}

\begin{cor} \label{cor:152}
Assume that $\a \sub A$ is a WPR ideal, and $\aa$ is some finite sequence 
that generates $\a$. Let $M \in \cat{D}(A)$. The following two conditions 
are equivalent. 
\begin{itemize}
\rmitem{i} $M$ is  derived $\a$-adically complete in the idealistic sense 
(Definition \ref{dfn:181}). 

\rmitem{ii} $M$ is  derived $\aa$-adically complete in the sequential sense 
(Definition \ref{dfn:201}). 
\end{itemize}
\end{cor}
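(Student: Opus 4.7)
The plan is to reduce the corollary directly to Theorem \ref{thm:165} via the commutative triangle of Proposition \ref{prop:316}. The two derived completions of $M$ are compared through the morphism
\[ u^{\mrm{L}}_{\aa, M} : \opn{RHom}_{A} \bigl( \opn{K}^{\vee}_{\infty}(A; \aa), M \bigr) \to \mrm{L} \La_{\a}(M) , \]
and by Proposition \ref{prop:316} this morphism sits in a commutative triangle together with $\tau^{\mrm{L}}_{\a, M}$ and $\tau^{\mrm{L}}_{\aa, M}$. Conditions (i) and (ii) assert that $\tau^{\mrm{L}}_{\a, M}$ and $\tau^{\mrm{L}}_{\aa, M}$, respectively, are isomorphisms. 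So the entire statement boils down to showing that $u^{\mrm{L}}_{\aa, M}$ is an isomorphism: once this is known, the two-out-of-three property applied to the triangle yields the equivalence.

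The hypothesis provides only that the ideal $\a$ is WPR, but WPR is defined in terms of a specific generating sequence, not the ideal. So the first step is to invoke Corollary \ref{cor:140}, which guarantees that \emph{any} finite sequence $\aa$ generating a WPR ideal is itself a WPR sequence. This upgrades the WPR hypothesis from the ideal to the chosen generating sequence $\aa$.

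With $\aa$ known to be WPR, I would then apply the implication (i) $\Rightarrow$ (iii) of Theorem \ref{thm:165}: for every $M \in \cat{D}(A)$, the morphism $u^{\mrm{L}}_{\aa, M}$ is an isomorphism. Plugging this into the commutative triangle of Proposition \ref{prop:316} gives the desired equivalence of (i) and (ii), and completes the proof.

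There is essentially no obstacle in this argument — it is a pure bookkeeping consequence of the earlier results. The only subtle point worth flagging is the passage from ``the ideal $\a$ is WPR'' to ``the specific generating sequence $\aa$ is WPR'', which requires Corollary \ref{cor:140} and is not automatic from the definitions.
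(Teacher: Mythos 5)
Your argument is correct and is exactly the paper's proof: the paper also combines Corollary \ref{cor:140} (to pass from the WPR ideal to the WPR sequence), Theorem \ref{thm:165}(i)$\Rightarrow$(iii) (to see that $u^{\mrm{L}}_{\aa, M}$ is an isomorphism), and the commutative triangle of Proposition \ref{prop:316}. Nothing is missing.
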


\begin{proof}
Combine Corollary \ref{cor:140}, the theorem above, and the commutativity of 
the diagram in Proposition \ref{prop:316}.
\end{proof}

There is a corresponding result for derived torsion, proved using 
Proposition \ref{prop:315}. 

Given a finitely generated ideal $\a \sub A$, we denote by 
$\cat{D}(A)_{\a \tup{-tor}}$ and $\cat{D}(A)_{\a \tup{-com}}$
the full subcategories of $\cat{D}(A)$ on the complexes that are idealistically
derived $\a$-torsion and idealistically derived $\a$-adically complete, 
respectively. These are triangulated subcategories. 

\begin{thm}[MGM Equivalence, {\cite[Theorem 1.1]{PSY}}] \label{thm:220}
Let $\a$ be a weakly proregular ideal in the ring $A$. Then:
\begin{enumerate}
\item For every $M \in \cat{D}(A)$ one has
$\mrm{R} \Gamma_{\a} (M) \in \cat{D}(A)_{\a \tup{-tor}}$
and 
$\mrm{L} \Lambda_{\a} (M) \in \cat{D}(A)_{\a \tup{-com}}$.

\item The functor 
\[ \mrm{R} \Gamma_{\a} : 
\cat{D}(A)_{\a \tup{-com}} \to 
\cat{D}(A)_{\a \tup{-tor}} \]
is an equivalence, with quasi-inverse $\mrm{L} \Lambda_{\a}$. 
\end{enumerate}
\end{thm}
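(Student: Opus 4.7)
The plan is to reduce everything to the sequential model of Section \ref{sec:sequential}, where the completion and torsion functors have explicit adjunction and idempotence properties, and then to push through an argument driven by the triangle coming from the short exact sequence (\ref{eqn:40}). First I would pick a finite sequence $\aa = (a_1, \ldots, a_n)$ generating $\a$; by Corollary \ref{cor:140} it is WPR. Write $T := \opn{K}^{\vee}_{\infty}(A; \aa)$, $C := \opn{C}(A; \aa)$, $F := T \ot_A (-)$ and $G := \opn{RHom}_A(T, -)$. By Theorem \ref{thm:165} together with Propositions \ref{prop:315} and \ref{prop:316}, there are canonical natural isomorphisms $\mrm{R}\Ga_\a \simeq F$ and $\mrm{L}\La_\a \simeq G$ that intertwine the structural morphisms $\si^{\mrm{R}}$ and $\tau^{\mrm{L}}$, so the idealistic and sequential notions of torsion and completeness coincide (Corollary \ref{cor:152}). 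It thus suffices to prove both parts for the pair $(F,G)$ and the sequential subcategories.

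For part (1), the key input is the idempotence $T \ot_A T \simeq T$ from Remark \ref{rem:251}, induced by the augmentation $T \to A$. Then $F(FM) = T \ot_A T \ot_A M \simeq T \ot_A M = FM$, and an unraveling of the definitions shows that the equivalence is precisely the structural morphism $\si^{\mrm{R}}_{\aa, FM}$; hence $FM$ is sequentially $\aa$-torsion. Dually, by Hom-tensor adjunction (Remark \ref{rem:217}), the identification $G(GM) \cong \opn{Hom}_A(T \ot_A T, M) \simeq \opn{Hom}_A(T, M) = GM$ presents $\tau^{\mrm{L}}_{\aa, GM}$ as the identity, so $GM$ is sequentially complete.

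For part (2), I would exploit the adjunction $(F, G)$ to prove that, as endofunctors of $\cat{D}(A)$, there are canonical natural isomorphisms $FG \iso F$ and $GF \iso G$. Granting these, for torsion $N$ one gets $FGN \simeq FN \simeq N$ (using that $\si^{\mrm{R}}_{\aa, N}$ is an isomorphism), and for complete $M$ one gets $GFM \simeq GM \simeq M$; a formal check shows these agree with the adjunction counit and unit, yielding the desired quasi-inverse equivalence between $\cat{D}(A)_{\a \tup{-com}}$ and $\cat{D}(A)_{\a \tup{-tor}}$.

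The hard part will be establishing the natural equivalences $FG \simeq F$ and $GF \simeq G$. Applying $F$ to the distinguished triangle $\opn{RHom}_A(C, M) \to M \to GM \xar{\, \triangle \, }$, obtained from (\ref{eqn:40}) by applying $\opn{RHom}_A(-, M)$, the statement $FG \simeq F$ reduces to the vanishing $T \ot_A \opn{RHom}_A(C, M) \simeq 0$ for all $M$; dually, $GF \simeq G$ reduces to $\opn{RHom}_A(T, C \ot_A M) \simeq 0$ for all $M$. Both vanishings rest on the elementary observation $T \ot_A C \simeq 0$, obtained by tensoring the triangle $C[-1] \to T \to A \xar{\, \triangle \, }$ with $T$ and invoking the idempotence $T \ot_A T \simeq T$. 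The main technical obstacle is bootstrapping $T \ot_A C \simeq 0$ to the stated vanishings against an arbitrary $M$; I would handle this using the K-projective telescope model $\opn{Tel}(A;\aa) \simeq T$ from (\ref{eqn:166}) and the K-flatness of $C$, together with the fact that each summand $A[a_I^{-1}]$ of a term of $C$ is annihilated by derived $\aa$-adic completion.
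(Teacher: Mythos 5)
The paper does not actually prove Theorem \ref{thm:220}: it is imported verbatim from \cite[Theorem 1.1]{PSY}. Your proposal is, in substance, a reconstruction of the proof given there: weak proregularity enters only once, through Theorem \ref{thm:165} (plus Corollary \ref{cor:140}, Propositions \ref{prop:315}--\ref{prop:316} and Corollary \ref{cor:152}) to replace $\mrm{R}\Ga_{\a}$ and $\mrm{L}\La_{\a}$ by the sequential functors $F = T \ot_A (-)$ and $G = \opn{RHom}_A(T,-)$ with $T = \opn{K}^{\vee}_{\infty}(A;\aa)$; after that everything is an unconditional statement about the Koszul/telescope model, driven by idempotence ($T \ot_A T \simeq T$, Remark \ref{rem:251}), adjunction (Remark \ref{rem:217}), and the triangle coming from (\ref{eqn:40}). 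Your part (1) and the reduction of part (2) to the vanishings $T \ot_A \opn{RHom}_A(C,M) \simeq 0$ and $\opn{RHom}_A(T, C \ot_A M) \simeq 0$ are correct, and note that for the equivalence you only need the natural isomorphisms $GF \cong \opn{Id}$ on complete objects and $FG \cong \opn{Id}$ on torsion objects; no unit/counit compatibility check is required.

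Two points where your sketch should be tightened. First, the two vanishings do not follow from $T \ot_A C \simeq 0$ alone (the terms of $C$ are not finitely generated, so there is no projection formula); the correct route, which you only gesture at, is the finite stupid filtration of $C$, reducing to: for each nonempty $I \sub \{1,\dots,n\}$, both $T \ot_A N \simeq 0$ and $\opn{RHom}_A(T,N) \simeq 0$ whenever $N$ is a complex of $A[a_I^{-1}]$-modules. You will need \emph{both} statements, not only the completion-annihilation fact you cite: for $T \ot_A \opn{RHom}_A(C,M)$ the relevant graded pieces are $\opn{RHom}_A(A[a_I^{-1}],M)$, complexes of $A[a_I^{-1}]$-modules whose sequential derived \emph{torsion} must vanish, while for $\opn{RHom}_A(T, C \ot_A M)$ it is $A[a_I^{-1}] \ot_A M$ whose sequential derived \emph{completion} must vanish. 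Second, both facts have a uniform one-line proof you may prefer to your ``decaying'' argument: since $N$ comes from $\cat{D}(A[a_I^{-1}])$, base change gives $T \ot_A N \simeq (A[a_I^{-1}] \ot_A T) \ot_{A[a_I^{-1}]} N$ and $\opn{RHom}_A(T,N) \simeq \opn{RHom}_{A[a_I^{-1}]}(A[a_I^{-1}] \ot_A T, N)$, and $A[a_I^{-1}] \ot_A T \cong \opn{K}^{\vee}_{\infty}(A[a_I^{-1}];\aa)$ is acyclic because some $a_i$, $i \in I$, is invertible there (its two-term factor (\ref{eqn:191}) is then acyclic). With these repairs your argument is complete and agrees in substance with the proof in \cite{PSY}.
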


A noncommutative version of weak proregularity (condition (ii) in Theorem 
\ref{thm:216} above), and the corresponding noncommutative MGM equivalence, can 
be found in the paper \cite{VY}.

\begin{rem} \label{rem:300}
Let $\{ N_i \}_{i \in \N}$ be an inverse system in $\cat{M}(A)$. 
Often the condition of being pro-zero is called the 
{\em trivial Mittag-Leffler condition}.
In \cite{LC} this is called {\em essentially zero}. 

Recall  the category $\cat{Pro}(\cat{M}(A))$ of 
pro-objects of $\cat{M}(A)$. It is the full subcategory of 
$\cat{Fun}(\cat{M}(A), \cat{Set})^{\mrm{op}}$ on the filtered 
colimits, in $\cat{Fun}(\cat{M}(A), \cat{Set})$, of corepresentable functors.
The category $\cat{Pro}(\cat{M}(A))$ is abelian.
See \cite[Section 1.7]{Ye3}, \cite[Remark 1.8.8]{Ye3}, 
\cite[Section 1.11]{KS1}, \cite[Section 6.1]{KS2} and 
\cite[Section 8.6]{KS2}. 

Given an inverse system $\{ N_i \}_{i \in \N}$ in $\cat{M}(A)$, let
$F_i := \opn{Hom}_A(N_i, -) : \cat{M}(A) \to \cat{Set}$, 
and let  
$F := {''{\underset{\lto i}{\mrm{lim}}}''} \lsp N_i =  
\underset{i \to}{\mrm{lim}} \, F_i$
be the pro-object obtained from the direct system of functors
$ \{ F_i \}_{i \in \N}$. 
For every $M \in \cat{M}(A)$ we have 
$F(M) = \underset{i \to}{\mrm{lim}} \, F_i(M) = 
\underset{i \to}{\mrm{lim}} \, \opn{Hom}_A(N_i, M)$.
We see that the inverse system $\{ N_i \}_{i \in \N}$ is pro-zero iff the 
pro-object 
${''{\underset{\lto i}{\mrm{lim}}}''} \lsp N_i$ 
is the zero object in $\cat{Pro}(\cat{M}(A))$.
\end{rem}

\section{Weak Proregularity and Adic Flatness}

Again, $A$ is some commutative ring. 

\begin{dfn} \label{dfn:225}
Let $\a$ be a finitely generated ideal in $A$, and let $M$ be an $A$-module. We 
say that $M$ is {\em $\a$-adically flat} if $\opn{Tor}^{A}_q(N, M) = 0$
for every $\a$-torsion $A$-module $N$ and every $q > 0$. 
\end{dfn}

This definition is copied from \cite{Ye2}. In \cite{BS2}, \cite{CS} 
the term used is {\em $\a$-completely flat}.
Here is a useful characterization of this property. 

\begin{thm}[{\cite[Theorem 1.3]{Ye2}}] \label{thm:225}
Let $\a$ be a finitely generated ideal in $A$, and for every $k \geq 0$ let 
$A_k := A / \a^{k + 1}$. The following three conditions are equivalent for an 
$A$-module $M$.
\begin{enumerate}
\rmitem{i} The $A$-module $M$ is $\a$-adically flat.

\rmitem{ii} For every $q > 0$ and $k \geq 0$ the module 
$\opn{Tor}^{A}_q(A_{k}, M)$ vanishes, and   
$A_{k} \ot_A M$ is a flat $A_k$-module.

\rmitem{iii} For every $q > 0$ the module 
$\opn{Tor}^{A}_q(A_{0}, M)$ vanishes, and   
$A_{0} \ot_A M$ is a flat $A_0$-module.
\end{enumerate}
\end{thm}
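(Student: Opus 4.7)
The natural route is the cycle (i) $\Rightarrow$ (ii) $\Rightarrow$ (iii) $\Rightarrow$ (i). The implication (ii) $\Rightarrow$ (iii) is trivial, since condition (ii) specialized to $k = 0$ is condition (iii). The real work is concentrated in (i) $\Rightarrow$ (ii) and, especially, in (iii) $\Rightarrow$ (i).

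For (i) $\Rightarrow$ (ii): each ring $A_k = A / \a^{k+1}$ is itself an $\a$-torsion $A$-module, so the vanishing of $\opn{Tor}^A_q(A_k, M)$ for $q > 0$ is immediate from Definition \ref{dfn:225}. To show that $A_k \ot_A M$ is flat as an $A_k$-module, take an arbitrary $A_k$-module $N$. Since $\a^{k+1}$ annihilates $N$, the $A$-module $N$ is $\a$-torsion, hence $\opn{Tor}^A_q(N, M) = 0$ for $q > 0$ by (i). Combined with the vanishing of $\opn{Tor}^A_{>0}(A_k, M)$, a standard change-of-rings identification (obtained, for instance, from an $A$-flat resolution of $N$ that becomes an $A_k$-flat resolution after applying $A_k \ot_A -$) gives
\[ \opn{Tor}^{A_k}_q(N, A_k \ot_A M) \cong \opn{Tor}^A_q(N, M) = 0 \]
for $q > 0$, proving the flatness of $A_k \ot_A M$ over $A_k$.

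For (iii) $\Rightarrow$ (i), I would proceed in two stages. \emph{Stage one}: using the same change-of-rings trick in reverse, (iii) implies $\opn{Tor}^A_q(N, M) = 0$ for every $A_0$-module $N$ and every $q > 0$, because $\opn{Tor}^A_q(N, M) \cong \opn{Tor}^{A_0}_q(N, A_0 \ot_A M)$ once $\opn{Tor}^A_{>0}(A_0, M)$ vanishes, and the right-hand side is zero by flatness of $A_0 \ot_A M$ over $A_0$. \emph{Stage two}: extend this to all $\a$-torsion modules. First, for an $A_k$-module $N$, filter it as
\[ 0 \sub \a^k \cd N \sub \a^{k-1} \cd N \sub \cdots \sub \a \cd N \sub N , \]
whose successive quotients are all $A_0$-modules; an induction on $k$ using the long exact Tor sequence of each successive short exact sequence, together with stage one, yields $\opn{Tor}^A_q(N, M) = 0$ for $q > 0$. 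Finally, an arbitrary $\a$-torsion module $N$ is the filtered colimit of the submodules $N_k := \{ n \in N \mid \a^{k+1} \cd n = 0 \}$, each of which is an $A_k$-module. Since Tor commutes with filtered colimits in the first argument, the vanishing transfers to $N$, establishing (i).

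The main obstacle I anticipate is the change-of-rings identification $\opn{Tor}^{A_k}_q(N, A_k \ot_A M) \cong \opn{Tor}^A_q(N, M)$, which has to be justified cleanly from the vanishing hypothesis on $\opn{Tor}^A_{>0}(A_k, M)$ -- the standard way is to choose an $A$-flat resolution $P_\bullet \to N$ and observe that, under that vanishing, $A_k \ot_A P_\bullet$ is a resolution of $N$ by flat $A_k$-modules, so $P_\bullet \ot_A M = (A_k \ot_A P_\bullet) \ot_{A_k} (A_k \ot_A M)$ computes both sides simultaneously. Once this change-of-rings tool is in place, the filtration/colimit argument in (iii) $\Rightarrow$ (i) is essentially bookkeeping.
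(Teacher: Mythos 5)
The paper does not prove this theorem internally -- it is quoted from \cite[Theorem 1.3]{Ye2} -- so there is no in-paper proof to compare against; judging your proposal on its own terms, the overall route is the natural one and is essentially the standard dévissage: the cycle (i) $\Rightarrow$ (ii) $\Rightarrow$ (iii) $\Rightarrow$ (i), a flat-base-change identification of Tor over $A$ with Tor over $A_k$, then reduction of an arbitrary $\a$-torsion module first to $A_k$-modules by the filtration with $\a$-annihilated quotients, and finally a filtered-colimit argument. All the intermediate statements you use are true, and (ii) $\Rightarrow$ (iii) is indeed trivial.

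There is, however, a genuine flaw in the one step you yourself flag as the crux. You justify the identification $\opn{Tor}^{A_k}_q(N, A_k \ot_A M) \cong \opn{Tor}^A_q(N, M)$ by taking an $A$-flat resolution $P_\bullet \to N$ and claiming that the vanishing of $\opn{Tor}^A_{>0}(A_k, M)$ makes $A_k \ot_A P_\bullet$ a flat $A_k$-resolution of $N$. It does not: the homology of $A_k \ot_A P_\bullet$ in degree $q > 0$ is $\opn{Tor}^A_q(A_k, N)$, which involves $N$, not $M$, and is nonzero in general (for instance $A = \Z$, $\a = (p)$, $k = 0$, $N = \Z / (p)$ gives $\opn{Tor}^{\Z}_1(\Z/(p), \Z/(p)) \neq 0$). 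So the step as described would fail. The repair is to resolve the other module: choose a projective resolution $Q_\bullet \to M$ over $A$; the hypothesis $\opn{Tor}^A_{>0}(A_k, M) = 0$ says precisely that $A_k \ot_A Q_\bullet \to A_k \ot_A M$ is a resolution by flat (even projective) $A_k$-modules, and then for any $A_k$-module $N$ one has $N \ot_{A_k} (A_k \ot_A Q_\bullet) \cong N \ot_A Q_\bullet$, which computes both $\opn{Tor}^{A_k}_q(N, A_k \ot_A M)$ and $\opn{Tor}^A_q(N, M)$ at once. With this one-line correction, both invocations of the change of rings (in (i) $\Rightarrow$ (ii) and in stage one of (iii) $\Rightarrow$ (i)) are valid, and the remaining bookkeeping -- induction along the filtration $\a^i \cd N$ and the colimit over the submodules of elements killed by $\a^{k+1}$ -- is correct as you wrote it.
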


The next theorem was considered by many to be unproved; but then, a few 
years ago, several different proofs of it have emerged. One of them -- the 
proof from \cite{Ye2} -- will be mentioned a bit later.

\begin{thm} \label{thm:226}
If $A$ is a noetherian commutative ring, $\a$ is an ideal in $A$, and $M$ is a 
flat $A$-module, then the $\a$-adic completion $\what{M} = \La_{\a}(M)$ is a 
flat $A$-module.
\end{thm}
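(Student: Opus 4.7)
The plan is to route the argument through the notion of $\a$-adic flatness just introduced in Definition \ref{dfn:225}, combining the two main theorems of this section (both previewed in the introduction) with the noetherian hypothesis. The idea is that one never has to touch $\opn{Tor}$ of $\La_\a(M)$ against an arbitrary module directly — the only $\opn{Tor}$ computations that intervene are handled inside the cited theorems.

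The first step is to record that since $A$ is noetherian, Grothendieck's theorem (Theorem \ref{thm:10}) says that every finite sequence in $A$ is WPR, and in particular the ideal $\a$ is weakly proregular. The second, trivial, step is that flatness of $M$ upgrades for free to $\a$-adic flatness in the sense of Definition \ref{dfn:225}: indeed $\opn{Tor}^A_q(N, M) = 0$ for every $A$-module $N$ and every $q>0$ when $M$ is flat, and a fortiori for every $\a$-torsion $N$. These two inputs are all that is needed to feed the main WPR preservation theorem.

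With these in hand, I would invoke Theorem \ref{thm:227} — the statement that $\a$-adic completion preserves $\a$-adic flatness whenever $\a$ is weakly proregular — to conclude that $\La_\a(M)$ is an $\a$-adically flat $A$-module. To promote this from adic flatness to plain flatness, I would use that $\La_\a(M)$ is $\a$-adically complete (which follows from the idempotence of $\La_\a$, Remark \ref{rem:180}, applicable because $\a$ is finitely generated in a noetherian ring), and then appeal to the noetherian criterion labeled Theorem 4.5 in the introduction: for $A$ noetherian, an $\a$-adically complete module is $\a$-adically flat iff it is flat. This closes the argument.

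The main obstacle lies entirely in the two black-box inputs, not in the assembly. Theorem \ref{thm:227} is the nontrivial point where weak proregularity is genuinely required, since one needs to control how $\La_\a$ interacts with the derived tensor product against an $\a$-torsion module; and Theorem 4.5 is where the noetherian hypothesis does real work, upgrading the Tor-vanishing from $\a$-torsion test modules to all $A$-modules by a devissage that uses Artin-Rees-type finiteness. Everything in the assembly above — the reduction from flat to adically flat, the automatic completeness of $\La_\a(M)$, and the invocation of WPR from noetherianness — is essentially formal.
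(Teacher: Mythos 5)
Your assembly is correct, but it is not the route the paper takes. The paper deduces Theorem \ref{thm:226} in a single step from Theorem \ref{thm:233}(2): a flat $A$-module $M$ yields the flat $\a$-adic system $\{ A_k \ot_A M \}_{k \in \N}$ (Example \ref{exa:240}, with each $A_k \ot_A M$ flat over $A_k$ by base change), whose inverse limit is $\what{M} = \La_{\a}(M)$, and item (2) of that theorem says that over a noetherian ring the limit of a flat adic system is flat --- no appeal to weak proregularity or to Theorem \ref{thm:228} is needed. Your route instead composes Theorem \ref{thm:10} (noetherian implies every ideal is WPR), the trivial implication flat $\Rightarrow$ $\a$-adically flat, Theorem \ref{thm:227} (completion preserves adic flatness when $\a$ is WPR), the completeness of $\La_{\a}(M)$ (Remark \ref{rem:180}, valid because $\a$ is finitely generated), and finally Theorem \ref{thm:228}. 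That is a legitimate derivation from the results as stated in the paper, and it has the small merit of exhibiting the intermediate fact that $\what{M}$ is $\a$-adically flat and complete; note, however, that within the paper's own organization Theorem \ref{thm:227} is itself deduced from Theorem \ref{thm:233}(1), so your chain is longer and uses the noetherian hypothesis twice (once via Grothendieck's theorem to get WPR, once inside Theorem \ref{thm:228}), whereas the paper's argument uses it exactly once, inside Theorem \ref{thm:233}(2). Both arguments ultimately rest on black boxes imported from \cite{Ye2}, so neither is more self-contained than the other.
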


Here is a similar result -- the assumptions are weaker and so is the outcome. 

\begin{thm}[{\cite[Theorem 1.4]{Ye2}}] \label{thm:227}
Let $\a$ be a weakly proregular ideal in $A$, and let $M$ be an $\a$-adically 
flat $A$-module, with $\a$-adic completion 
$\what{M} = \La_{\a}(M)$. Then the $A$-module $\what{M}$ is $\a$-adically flat. 
\end{thm}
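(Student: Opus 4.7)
The plan is to apply Theorem~\ref{thm:225}(ii): to show $\what{M}$ is $\a$-adically flat, it suffices to prove that for every $k \geq 0$ and $q > 0$ we have $\opn{Tor}^{A}_{q}(A_k, \what{M}) = 0$, and that the quotient $A_k \ot_A \what{M}$ is flat over $A_k$. Both conclusions would follow from the single derived-category identification
\[ A_k \ot^{\mrm{L}}_A \what{M} \;\cong\; M / \a^{k+1} M \quad \text{in } \cat{D}(A) , \]
since the right hand side is concentrated in degree zero and is $A_k$-flat by the $\a$-adic flatness of $M$ combined with Theorem~\ref{thm:225}(ii). The overall strategy is to pass through the idealistic derived completion $\mrm{L}\La_{\a}(M)$, exploit the WPR hypothesis via the sequential description (Theorem~\ref{thm:165}) and MGM equivalence (Theorem~\ref{thm:220}), and then use $\a$-adic flatness of $M$ to collapse the derived tensor to degree zero.

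First I would identify $\mrm{L}\La_{\a}(M) \cong \what{M}$ in $\cat{D}(A)$ — equivalently, show that the higher derived completions $\mrm{L}_q \La_{\a}(M)$ vanish for $q > 0$. The heuristic is that since each $M/\a^{k+1}M$ is $A_k$-flat, the inverse system defining $\La_{\a}(M)$ is sufficiently well-behaved — in the presence of WPR — that $\La_{\a}$ applied to a suitable flat resolution of $M$ is exact in positive cohomological degrees, producing $\what{M}$ concentrated in degree zero. This is the main obstacle, and I expect the bulk of the work to live here; the detailed argument would most naturally proceed via a Mittag-Leffler / pro-zero vanishing (Definition~\ref{dfn:45}) for the relevant inverse systems of Koszul cohomologies of a flat resolution of $M$.

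Second, I would establish the projection-type identity
\[ A_k \ot^{\mrm{L}}_A \mrm{L}\La_{\a}(M) \;\cong\; A_k \ot^{\mrm{L}}_A M \]
in $\cat{D}(A)$. Choose a finite WPR sequence $\aa$ generating $\a$. By Theorem~\ref{thm:165}, $\mrm{R}\Ga_{\a}(-) \cong \opn{K}^{\vee}_{\infty}(A; \aa) \ot^{\mrm{L}}_A (-)$, and the MGM equivalence (Theorem~\ref{thm:220}) applied after $\mrm{L}\La_{\a}$ yields $\mrm{R}\Ga_{\a}(\mrm{L}\La_{\a}(M)) \cong \mrm{R}\Ga_{\a}(M)$. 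Combining these gives
\[ \opn{K}^{\vee}_{\infty}(A; \aa) \ot^{\mrm{L}}_A \mrm{L}\La_{\a}(M) \;\cong\; \opn{K}^{\vee}_{\infty}(A; \aa) \ot^{\mrm{L}}_A M . \]
Tensoring both sides with $A_k$ over $A$ and using that $A_k$ is an $\a$-torsion $A$-module — hence, under WPR, derived $\a$-torsion, so $A_k \ot^{\mrm{L}}_A \opn{K}^{\vee}_{\infty}(A; \aa) \cong \mrm{R}\Ga_{\a}(A_k) \cong A_k$ — collapses the triple tensor products on each side and produces the desired identity.

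Combining the two steps gives $A_k \ot^{\mrm{L}}_A \what{M} \cong A_k \ot^{\mrm{L}}_A M$, and the $\a$-adic flatness of $M$ (Theorem~\ref{thm:225}(ii)) then reduces the right hand side to $A_k \ot_A M = M/\a^{k+1}M$, concentrated in degree zero and $A_k$-flat. This furnishes both the Tor vanishing $\opn{Tor}^{A}_{q}(A_k, \what{M}) = 0$ for all $q > 0$ and the $A_k$-flatness of $A_k \ot_A \what{M}$ required by the criterion, so $\what{M}$ is $\a$-adically flat.
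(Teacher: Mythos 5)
Your strategy is sound and genuinely different from the paper's. The paper obtains Theorem \ref{thm:227} as an immediate corollary of the adic-systems machinery: by Theorem \ref{thm:225} the modules $M_k := A_k \ot_A M$ form a flat $\a$-adic system (Definition \ref{dfn:232}, Example \ref{exa:240}) whose inverse limit is exactly $\what{M} = \La_{\a}(M)$, and Theorem \ref{thm:233}(1) then gives the $\a$-adic flatness of $\what{M}$; all of the derived-category work is hidden in the citation of \cite{Ye2}. Your Step 2 is correct as written: since $A_k$ is an $\a$-torsion module, $A_k \ot_A \opn{K}^{\vee}_{\infty}(A; \aa) \cong A_k$ (all positive-degree terms are localizations of $A_k$ at products of the $a_i$, which vanish because the $a_i$ are nilpotent in $A_k$ -- weak proregularity is not even needed here), and the collapse via associativity of $\ot^{\mrm{L}}_A$ is legitimate. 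The only small caveat is that the natural isomorphism $\mrm{R}\Ga_{\a} \circ \mrm{L}\La_{\a} \cong \mrm{R}\Ga_{\a}$ on all of $\cat{D}(A)$ is a bit more than the literal statement of Theorem \ref{thm:220}, though it is part of the MGM package proved in \cite{PSY}.

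The gap is Step 1, which you correctly flag as the heart of the matter but leave at the level of a heuristic: nothing in your sketch explains where $\a$-adic flatness of $M$ (as opposed to honest flatness) actually enters, and a ``flat resolution of $M$'' is not the natural vehicle. The missing mechanism is this. Under weak proregularity, $\mrm{L}\La_{\a}(M) \cong \opn{Hom}_A\bigl(\opn{Tel}(A; \aa), M\bigr)$, and because the Koszul complexes $\opn{K}(A; \aa^i)$ are finite free while $\opn{Tel}(A; \aa)$ is the telescope of their duals, this complex is a homotopy limit over $i$ of the complexes $\opn{K}(A; \aa^i) \ot_A M$. Every cohomology module of $\opn{K}(A; \aa^i)$ is annihilated by the ideal generated by $\aa^{i}$, hence is $\a$-torsion; therefore $\a$-adic flatness of $M$ (Definition \ref{dfn:225}) kills the higher Tor's against these cohomologies, and the hyper-Tor spectral sequence yields isomorphisms $\opn{H}^q\bigl(\opn{K}(A; \aa^i) \ot_A M\bigr) \cong \opn{H}^q\bigl(\opn{K}(A; \aa^i)\bigr) \ot_A M$, functorial in $i$. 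Weak proregularity makes the right-hand systems pro-zero for $q < 0$ (Definition \ref{dfn:45}; pro-zero survives tensoring), while in degree $0$ the transition maps of $\bigl\{ M / (a_1^i, \ldots, a_n^i) M \bigr\}_{i}$ are surjective; the $\lim$--${\lim}^{1}$ sequence for the homotopy limit then shows that $\mrm{L}\La_{\a}(M)$ is concentrated in degree $0$ and equal to $\lim_{\leftarrow i} \, M / (a_1^i, \ldots, a_n^i) M = \La_{\a}(M) = \what{M}$, the last equality by cofinality of the ideals $(a_1^i, \ldots, a_n^i)$ and $\a^{i}$. With this inserted your argument closes; it is in essence the argument carried out in \cite{Ye2}, which the present paper quotes only through Theorem \ref{thm:233}.
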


When the ring $A$ is noetherian, the two notions of flatness coincide:

\begin{thm}[{\cite[Theorem 1.5]{Ye2}}] \label{thm:228}
If $A$ is a noetherian ring, $\a$ is an ideal in $A$, and 
$\what{M}$ is an $\a$-adically flat $\a$-adically complete $A$-module,
then $\what{M}$ is a flat $A$-module. 
\end{thm}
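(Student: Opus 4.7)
My plan is to deduce flatness of $\what M$ from the classical local criterion for flatness, with $\a$-adic flatness supplying the easy hypotheses and $\a$-adic completeness supplying ideal-adic separation via Artin--Rees. I begin by reducing: since Tor commutes with filtered colimits in the first argument and $A$ is noetherian, it suffices to show $\opn{Tor}^A_q(N, \what M) = 0$ for every $q \geq 1$ and every finitely generated $A$-module $N$.

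Next I invoke the classical local flatness criterion (EGA $0_{\mrm{I}}$ 10.2.2, or Bourbaki AC III \S5 Thm.\ 1, in its ``ideal-adically separated'' form): for a noetherian ring $A$ with ideal $\a$, if an $A$-module $M$ is $\a$-adically ideal-separated---meaning $\b \ot_A M$ is $\a$-adically Hausdorff for every finitely generated ideal $\b \sub A$---then $M$ is $A$-flat iff $M/\a M$ is $A/\a$-flat and $\opn{Tor}^A_1(A/\a, M) = 0$. Applied to $M = \what M$, the hypothesis of $\a$-adic flatness together with Theorem \ref{thm:225} immediately furnishes the latter two conditions, so the entire task reduces to verifying ideal-adic separation of $\what M$.

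To verify ideal-adic separation, fix a finitely generated ideal $\b \sub A$ and choose a presentation $A^m \xar{\phi} A^n \surj \b$. Tensoring with $\what M$ identifies $\b \ot_A \what M$ with $\what M^n / \opn{im}(\phi \ot \mrm{id}_{\what M})$, and since $\what M^n$ is $\a$-adically complete, separation of $\b \ot_A \what M$ is equivalent to closedness of $\opn{im}(\phi \ot \mrm{id}_{\what M})$ inside $\what M^n$ for the $\a$-adic topology. The idea is then: given $y \in \what M^n$ lying in $\opn{im}(\phi \ot \mrm{id}_{\what M}) + \a^k \what M^n$ for every $k$, pick lifts $x_k \in \what M^m$ and use Artin--Rees in $A^n$---producing a constant $c$ with $\a^k A^n \cap \phi(A^m) \sub \a^{k-c} \phi(A^m)$ for $k \geq c$---together with the vanishing $\opn{Tor}^A_1(A/\a^k, \what M) = 0$ from $\a$-adic flatness, to modify the $x_k$ within $\ker(\phi \ot \mrm{id}_{\what M})$ into a genuinely $\a$-adically Cauchy sequence in $\what M^m$, whose limit exists by completeness and maps to $y$.

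The main obstacle is precisely this last reassembly: Artin--Rees is a statement about $A$-modules, and to push it through $\ot_A \what M$ one cannot invoke ordinary flatness, as that is what we are trying to establish. The correct substitute is $\a$-adic flatness of $\what M$, which makes $\ot_A \what M$ exact on the $\a$-torsion system $\{A/\a^k\}_{k}$; combining this with the completeness of $\what M$---supplying limits of Cauchy sequences---is what allows the finitistic Artin--Rees containment to be transferred to the desired closedness in $\what M^n$.
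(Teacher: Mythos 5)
Your overall strategy---the classical local criterion for flatness in its ``ideally separated'' form, with Theorem \ref{thm:225} supplying the two fibre conditions---is legitimate, and it differs from the route behind the paper's citation: the paper gives no proof (it quotes \cite[Theorem 1.5]{Ye2}), but the statement follows in one line from results recalled in Section 4, since $\{ A_k \ot_A \what{M} \}_{k \in \N}$ is a flat $\a$-adic system (Theorem \ref{thm:225} and Example \ref{exa:240}) whose limit is $\what{M}$ by completeness, so Theorem \ref{thm:233}(2) applies. However, as written your argument has a genuine gap, and it sits exactly where you flag it: verifying that $\what{M}$ is ideally separated is the entire content of the theorem on this route, and your last two paragraphs describe the obstacle rather than overcome it. Concretely, what your correction-of-lifts scheme needs is a uniform statement: there is a constant $c$ such that any $u \in \what{M}^m$ with $(\phi \ot \mrm{id})(u) \in \a^{k} \what{M}^n$ lies in $\ker(\phi \ot \mrm{id}) + \a^{k-c} \what{M}^m$. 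The two ingredients you actually name---Artin--Rees for $\opn{im}(\phi) \sub A^n$ and the vanishing $\opn{Tor}^A_1(A / \a^k, \what{M}) = 0$---do not yield this: that Tor vanishing only says $\a^k \ot_A \what{M} \to \what{M}$ is injective, and transporting the Artin--Rees containment across $\ot_A \what{M}$ is precisely the move that ordinary flatness would justify and that you correctly observe you may not make.

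The gap is fillable, but by the other half of adic flatness, not the part you cite. By Theorem \ref{thm:225}(ii) each reduction $\what{M} / \a^k \what{M}$ is flat over $A_k$, hence
$\ker \bigl( \phi \ot \mrm{id}_{\what{M}/\a^k \what{M}} \bigr) \cong \ker(\phi \bmod \a^k) \ot_{A_k} (\what{M}/\a^k \what{M})$;
Artin--Rees over the noetherian ring $A$ shows that the transition map $\ker(\phi \bmod \a^{k+c}) \to \ker(\phi \bmod \a^{k})$ factors through the image of $\ker(\phi)$, and tensoring this factorization with $\what{M}$ gives the uniform containment above (equivalently, a Mittag--Leffler property for the kernels modulo $\a^k$), after which completeness of $\what{M}$ produces the limit and the closedness of $\opn{im}(\phi \ot \mrm{id})$ follows, exactly as in your reassembly step. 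So the plan can be carried through, but the decisive lemma has to be proved along these lines, using the $A_k$-flatness of the reductions rather than the Tor vanishing you invoke; until that is done the proof is incomplete.
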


Theorem \ref{thm:228} was used in the recent preprint \cite{CS} -- see Lemma 
6.2.1 of op.\ cit. 

Here we feel an example is due, showing that the concepts we are talking about 
are truly distinct. 

\begin{exa} \label{exa:225}
Let $\K$ be a field of characteristic $0$, let $\K[[t_1]]$ and 
$\K[[t_2]]$ be the rings of power series in the variables $t_1$ and 
$t_2$, and let $A$ be the ring
\[ A := \K[[t_1]] \ot_{\K} \K[[t_2]] . \]
Let $\a$ be the ideal in $A$ generated by $t_1$ and $t_2$,
and let $\what{A}$ be the $\a$-adic completion of $A$. 
Then, according to \cite[Theorem 7.2]{Ye2}, the following hold:
\begin{enumerate}
\item The ideal $\a$ is weakly proregular. 
\item The ring $A$ is not noetherian.
\item The ring $\what{A}$ is noetherian.
\item The ring $\what{A}$ is $\a$-adically flat over $A$. 
\item The ring $\what{A}$ is not flat over $A$. 
\end{enumerate}
\end{exa}

Quite surprisingly, weak proregularity is a consequence of the preservation of 
adic flatness under completion. Indeed, we have the next result of Positselski, 
whose proof requires the deeper methods alluded to in Remark \ref{rem:220}.

\begin{thm}[{\cite[Theorem 7.2]{Po}}] \label{thm:230}
Let $\a$ be a finitely generated ideal in $A$, and let $P$ be a free $A$-module 
of infinite rank. If the $A$-module 
$\what{P} = \La_{\a}(P)$ is $\a$-adically flat, then the ideal $\a$ is weakly 
proregular. 
\end{thm}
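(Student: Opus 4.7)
The plan is to argue by contrapositive: assume $\a$ is not weakly proregular, and deduce that $\what{P}$ fails to be $\a$-adically flat.

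By Remark \ref{rem:250}, the failure of WPR together with $P$ being free of infinite rank implies that $\what{P} = \La_{\a}(P)$ is not derived $\a$-adically complete in the idealistic sense; that is, the morphism $\tau^{\mrm{L}}_{\a, \what{P}} : \what{P} \to \mrm{L}\La_{\a}(\what{P})$ is not an isomorphism in $\cat{D}(A)$. This is the starting obstruction I would exploit.

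I would then establish the following key comparison as a lemma: for an $A$-module $M$ that is already $\a$-adically complete, $\a$-adic flatness of $M$ forces $\tau^{\mrm{L}}_{\a, M}$ to be an isomorphism. The route is to fix a K-flat resolution $F^{\bullet} \to M$, so that $\mrm{L}\La_{\a}(M) \simeq \La_{\a}(F^{\bullet})$. Theorem \ref{thm:225} translates the $\a$-adic flatness of $M$ into the vanishings $\opn{Tor}^A_q(A/\a^k, M) = 0$ for all $q > 0$ and $k \geq 0$, together with flatness of each $M/\a^k M$ over $A/\a^k$. One would then combine these Tor-vanishings with the completeness identity $M = \lim_k M/\a^k M$ and the Mittag-Leffler property of the surjective tower $\{M/\a^k M\}_{k \in \N}$ (which kills the $\mrm{R}^1\lim$ term) to conclude that $\La_{\a}(F^{\bullet})$ is quasi-isomorphic to $M$ concentrated in degree zero, with the quasi-isomorphism being $\tau^{\mrm{L}}_{\a, M}$.

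Combining the pieces: since $\what{P}$ is automatically $\a$-adically complete (Remark \ref{rem:180}), if it were additionally $\a$-adically flat, the key comparison would force $\tau^{\mrm{L}}_{\a, \what{P}}$ to be an isomorphism, contradicting the first paragraph. Therefore $\what{P}$ cannot be $\a$-adically flat, and the contrapositive is established.

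The main obstacle is the key comparison. Without assuming WPR a priori, the convenient identity $\mrm{L}\La_{\a}(-) \simeq \mrm{R}\lim_k \bigl( (A/\a^k) \otimes^{\mrm{L}}_A (-) \bigr)$ cannot be invoked freely, since that formula itself relies on WPR-type hypotheses to relate the two flavors of derived completion. One must instead stay with the ``left-derived-of-$\La_{\a}$'' definition and directly control both the term-wise $\a$-adic completion of the K-flat resolution $F^{\bullet}$ and the passage to the inverse limit, using the specific flatness hypothesis on $M$ to collapse the would-be spectral sequence. This is the technical core of Positselski's argument in \cite[Section 7]{Po}, where his contramodule framework (cf.\ Remark \ref{rem:220}) provides the right categorical setting to carry out the comparison cleanly.
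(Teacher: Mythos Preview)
The paper does not supply a proof of this theorem; it is stated as a result of Positselski with the citation \cite[Theorem 7.2]{Po} and no argument is given in the paper. There is therefore nothing in the paper itself to compare your proposal against.

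On your sketch itself: the contrapositive structure is reasonable, and you honestly flag the central obstacle. Two remarks are in order. First, you lean on Remark~\ref{rem:250} for the starting obstruction, but that remark is itself only a citation to \cite[Proposition 5.3]{Po}; combined with your deferral of the ``key comparison'' to \cite[Section 7]{Po}, the proposal is really a map of how two pieces of Positselski's paper fit together rather than an independent argument. That is fine as an outline, but you should be aware that you have not reduced the theorem to anything proved in the present paper.

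Second, the key comparison you isolate---that an $\a$-adically complete, $\a$-adically flat module $M$ has $\tau^{\mrm{L}}_{\a,M}$ an isomorphism---is plausible, but the route you sketch (Tor-vanishing from Theorem~\ref{thm:225} plus Mittag--Leffler on a K-flat resolution $F^{\bullet}$) does not obviously close. The resolution $F^{\bullet}$ is unbounded below, and the passage from the termwise quasi-isomorphisms $F^{\bullet}/\a^k F^{\bullet} \to M/\a^k M$ to a quasi-isomorphism after $\lim_k$ requires control of cohomology of an inverse limit of unbounded complexes; the termwise surjectivity you invoke does not by itself give a Milnor-type short exact sequence in that generality. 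This is precisely the delicate point that Positselski's contramodule framework is built to handle, and your outline does not yet supply enough to bridge it on its own.
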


\begin{rem} \label{rem:230}
There is a description of the free module $P$ and its completion $\what{P}$ 
from the theorem above, which is sometimes useful, and is inspired by 
functional analysis. Given a set $X$, the module of finitely supported functions 
$f : X \to A$ is denoted by $\opn{F}_{\mrm{fin}}(X, A)$.
This is a free $A$-module with basis the delta functions $\de_x$. 

Now take a finitely generated ideal $\a \sub A$, and let $\what{A}$ be the 
$\a$-adic completion of $A$.
A function $f : X \to \what{A}$ is called {\em $\a$-adically decaying} if for 
every $k \geq 1$, the set 
$\{ x \in X \mid f(x) \notin \a^{k} \cd \what{A} \}$ 
is finite. We denote by 
$\opn{F}_{\mrm{dec}}(X, \what{A})$ the $A$-module of decaying functions. 
It turns out that $\opn{F}_{\mrm{dec}}(X, \what{A})$ is the 
$\a$-adic completion of $\opn{F}_{\mrm{fin}}(X, A)$.
See \cite[Corollary 2.9]{Ye1}.
\end{rem}

The next concept we shall introduce had a pivotal role in our study of adic 
flatness in the paper \cite{Ye2}, and we hope some readers might also find it 
useful.  

\begin{dfn} \label{dfn:230}
Let $\a$ be a  finitely generated ideal in $A$, and for every $k \geq 0$ let 
$A_k := A / \a^{k + 1}$. 
An {\em $\a$-adic system of $A$-modules} 
is an inverse system $\{ M_k \}_{k \in \N}$
of $A$-modules, such that each $M_k$ is an $A_k$-module, and for each $k$ 
the induced homomorphism 
$A_k \ot_{A_{k + 1}} M_{k + 1} \to M_k$
is bijective. 
\end{dfn}

The notation of this definition will be used implicitly below. 

\begin{exa} \label{exa:240}
An $A$-module $M$ gives rise to an $\a$-adic system 
$\{ M_k \}_{k \in \N}$, where $M_k := A_k \ot_{A} M$. 
\end{exa}

\begin{thm}[{\cite[Theorem 1.2]{Ye2}}] \label{thm:231}
Let $\a$ be a finitely generated ideal in $A$, 
and let $\{ M_k \}_{k \in \N}$ be an $\a$-adic system of 
$A$-modules, with limit $\what{M} := \lim_{\leftarrow k} M_{k}$. Then:
\begin{enumerate}
\item The $A$-module $\what{M}$ is $\a$-adically complete.

\item For every $k \geq 0$ the canonical homomorphism 
$A_k \ot_A \what{M} \to M_k$ is bijective.
\end{enumerate} 
\end{thm}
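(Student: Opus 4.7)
The plan is to first observe that part (1) follows from part (2): once $\what{M}/\a^{k+1}\what{M} \iso M_k$ is known for all $k$, we get $\what{M} \cong \lim_{\leftarrow k} M_k \cong \lim_{\leftarrow k} \what{M}/\a^{k+1}\what{M}$, which is the defining property of $\a$-adic completeness. I therefore focus on (2). The hypothesis forces each transition $M_{k+1} \to M_k$ to be surjective with kernel $\a^{k+1} M_{k+1}$; in particular, the inverse system $\{M_k\}$ satisfies the Mittag-Leffler condition, and the canonical map $\pi_k : \what{M} \to M_k$ is surjective. The containment $\a^{k+1}\what{M} \sub \ker(\pi_k)$ is immediate, so the real content is the reverse inclusion.

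For this I would reduce to the standard $\a$-adic system of a free module. Choose a free $A$-module $F$ together with a surjection $F \twoheadrightarrow M_0$, and use projectivity of $F$ together with the surjections $M_{k+1} \twoheadrightarrow M_k$ to inductively build compatible lifts $\psi_k : F \to M_k$. Since $\a^{k+1} M_k = 0$, these descend to maps $\bar\psi_k : F_k \to M_k$ for $F_k := F/\a^{k+1} F$. A short induction then shows each $\bar\psi_k$ is surjective: assuming this for $\bar\psi_k$, the fact that $\a^{2k+2} \sub \a^{k+2}$ annihilates $M_{k+1}$ forces $\a^{k+1} M_{k+1}$ to lie in the image of $\bar\psi_{k+1}$, which combined with surjectivity modulo $\a^{k+1} M_{k+1}$ gives surjectivity of $\bar\psi_{k+1}$. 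Setting $G_k := \ker(\bar\psi_k)$, the transitions $G_{k+1} \to G_k$ are surjective too: given $g \in G_k$, lift it to $\til g \in F_{k+1}$; the error $\bar\psi_{k+1}(\til g) \in \a^{k+1} M_{k+1}$ may be written $\sum_l b_l m_l$ for generators $b_l$ of $\a^{k+1}$, and lifting each $m_l$ to some $f_l \in F_{k+1}$ through $\bar\psi_{k+1}$, the element $\til g - \sum_l b_l f_l$ lies in $G_{k+1}$ and still projects to $g$. Mittag-Leffler on $0 \to \{G_k\} \to \{F_k\} \to \{M_k\} \to 0$ then yields an exact sequence of limits $0 \to \what{G} \to \what{F} \to \what{M} \to 0$ and shows each $\what{G} \to G_k$ is surjective.

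At this point I would invoke the free-module base case of (2), namely $\ker(\what{F} \to F_k) = \a^{k+1}\what{F}$, which is \cite[Corollary 2.9]{Ye1} (proved via the description of $\what{F}$ as $\a$-adically decaying functions from the basis of $F$ to $\what{A}$, cf.\ Remark \ref{rem:230}). A diagram chase closes the argument. Writing $p_k : \what{F} \to F_k$ and $\phi : \what{F} \twoheadrightarrow \what{M}$, one has
\[ \phi^{-1}(\ker \pi_k) \,=\, p_k^{-1}(G_k) \,=\, \ker(p_k) + \what{G} \,=\, \a^{k+1}\what{F} + \what{G} , \]
where the middle equality uses that $\what{G} \twoheadrightarrow G_k$. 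Applying $\phi$, and using $\phi(\what{G}) = 0$ and $\phi(\a^{k+1}\what{F}) = \a^{k+1}\what{M}$, gives $\ker(\pi_k) = \a^{k+1}\what{M}$, which completes (2).

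The main obstacle is the free-module base case: unlike the formal surjection/Mittag-Leffler manipulations above, it is not a pure consequence of inverse-limit theory, and is precisely the point at which the finite generation of $\a$ really has to be used (through the concrete ``decaying functions'' picture of $\a$-adic completions).
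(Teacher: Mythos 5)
The paper does not actually prove this theorem: it is imported verbatim from \cite[Theorem 1.2]{Ye2}, so your argument has to be judged on its own terms, and it holds up. The reduction of (1) to (2) is fine, since the isomorphisms $A_k \ot_A \what{M} \to M_k$ are compatible with the transition maps, so the canonical map $\what{M} \to \La_{\a}(\what{M})$ becomes the identity of $\lim_{\leftarrow k} M_k$. Your identification of the transition maps $M_{k+1} \to M_k$ as surjections with kernel $\a^{k+1} M_{k+1}$ is exactly what Definition \ref{dfn:230} gives; the induction showing each $\bar{\psi}_k$ is surjective is correct because $\a^{k+2}$ annihilates $M_{k+1}$; the surjectivity of $G_{k+1} \to G_k$ and the Mittag--Leffler argument producing $0 \to \what{G} \to \what{F} \to \what{M} \to 0$ together with $\what{G} \twoheadrightarrow G_k$ are standard and correctly applied; and the final chase $\ker(\pi_k) = \phi\bigl(\a^{k+1}\what{F} + \what{G}\bigr) = \a^{k+1}\what{M}$ is valid. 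You are also right to flag the free-module base case $\ker\bigl(\La_{\a}(F) \to F/\a^{k+1}F\bigr) = \a^{k+1}\La_{\a}(F)$ as the one nonformal input and the only place finite generation of $\a$ is truly used; this is indeed a known fact, obtainable from the decaying-function description of completions of free modules in \cite{Ye1} (cf.\ Remark \ref{rem:230}), and it is also recorded in \cite[tag \texttt{05GG}]{SP} in the stronger form valid for arbitrary modules. In spirit this matches the cited source: \cite{Ye2} likewise rests on the behaviour of completions of free modules with respect to finitely generated ideals developed in \cite{Ye1}, so your proof is a legitimate, essentially self-contained substitute for the citation rather than a divergent strategy.
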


\begin{dfn} \label{dfn:232}
An $\a$-adic system $\{ M_k \}_{k \in \N}$ is called {\em flat} if each $M_k$ 
is a flat $A_k$-module.  
\end{dfn}

\begin{thm}[{\cite[Theorem 1.6]{Ye2}}] \label{thm:233}
Let $\a$ be a  finitely generated ideal in $A$, and let $\{ M_k \}_{k \in \N}$ 
be a flat  $\a$-adic system, with limit $\what{M} := \lim_{\leftarrow k} M_{k}$.
\begin{enumerate} 
\item If the ideal $\a$ is weakly proregular, then $\what{M}$ is an 
$\a$-adically flat $A$-module. 
\item If the ring $A$ is noetherian, then $\what{M}$ is a flat $A$-module. 
\end{enumerate}
\end{thm}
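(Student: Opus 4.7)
My plan is to derive part~(2) as a corollary of part~(1), and to attack part~(1) by reducing $\a$-adic flatness to a Tor-vanishing statement that can be verified using weak proregularity.

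For part~(1), by Theorem~\ref{thm:231}(2) we have $A_k \ot_A \what{M} \cong M_k$, which is a flat $A_k$-module by assumption; hence, by the characterization in Theorem~\ref{thm:225}(ii), it suffices to show $\opn{Tor}^A_q(A_k, \what{M}) = 0$ for all $q>0$ and $k \ge 0$. The general vanishing $\opn{Tor}^A_q(N, \what{M}) = 0$ for $\a$-torsion $N$, needed for Definition~\ref{dfn:225}, then follows by writing $N$ as the filtered colimit of its $\a^m$-torsion submodules (each of which is an $A_{m-1}$-module), observing that Tor commutes with filtered colimits, and invoking the change-of-rings formula $N \ot^{\mrm{L}}_A \what{M} \cong N \ot^{\mrm{L}}_{A_k}(A_k \ot^{\mrm{L}}_A \what{M})$ together with the $A_k$-flatness of $M_k$.

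To establish the Tor vanishing I choose a finite sequence $\aa = (a_1,\ldots,a_n)$ generating $\a$; it is WPR by Corollary~\ref{cor:140}. Let $K_j := \opn{K}(A; \aa^j)$, a bounded complex of finite free $A$-modules. Because each term is finite free, $K_j \ot_A \what{M} \cong \lim_{\leftarrow k}(K_j \ot_A M_k)$; moreover, for $k \le j-1$ the sequence $\aa^j$ acts as zero on the $A_k$-module $M_k$, so $K_j \ot_A M_k$ has vanishing differential and decomposes as a direct sum of shifted copies of $M_k$. The WPR hypothesis, asserting that the pro-system $\{H^q(K_j)\}_j$ is pro-zero for $q<0$, combined with the cofinality of the pro-systems $\{A/(\aa^j)\}_j$ and $\{A/\a^{k+1}\}_k$, identifies $\{K_j\}_j$ with $\{A_k\}_k$ in the pro-category of $\cat{D}(A)$. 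Applying $-\ot^{\mrm{L}}_A \what{M}$ and using the $A_k$-flatness of each $M_k$ together with the Mittag--Leffler property of the inverse system $\{M_k\}$ (which kills $\mrm{R}^1\lim$) should yield that $A_k \ot^{\mrm{L}}_A \what{M}$ is concentrated in degree zero, equal to $M_k$, giving the desired Tor vanishing.

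I expect the main obstacle to be the translation in the final step from a pro-categorical vanishing (the WPR condition on Koszul cohomologies) into an actual Tor vanishing against $\what{M}$. This requires careful bookkeeping of two nested inverse limits, over the index $j$ of the Koszul approximations and over the index $k$ of the $\a$-adic system, as well as of the various Milnor exact sequences and spectral sequences relating them. Finally, part~(2) follows from part~(1): if $A$ is noetherian, then $\a$ is WPR by Theorem~\ref{thm:10}, so by part~(1) the module $\what{M}$ is $\a$-adically flat; it is also $\a$-adically complete by Theorem~\ref{thm:231}(1); hence, by Theorem~\ref{thm:228}, $\what{M}$ is flat over $A$.
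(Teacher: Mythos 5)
The theorem is not proved in this paper at all; it is quoted from \cite[Theorem 1.6]{Ye2}, so your attempt must be judged on its own. Your formal reductions are fine: part (2) does follow from part (1) together with Theorems \ref{thm:10}, \ref{thm:231}(1) and \ref{thm:228}; and for part (1), since $A_k \ot_A \what{M} \cong M_k$ is flat over $A_k$ by Theorem \ref{thm:231}(2), everything reduces (this is exactly Theorem \ref{thm:225}(ii); your colimit/change-of-rings paragraph just re-derives it) to the single hard claim $\opn{Tor}^A_q(A_k, \what{M}) = 0$ for all $q > 0$. That claim is the entire content of the theorem, and it is precisely where your proposal stops being a proof.

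The mechanism you sketch cannot deliver it in the form stated. Identifying $\{ \opn{K}(A;\aa^j) \}_j$ with $\{ A_k \}_k$ in $\opn{Pro}(\cat{D}(A))$ is essentially a restatement of weak proregularity plus cofinality, but applying $- \ot^{\mrm{L}}_A \what{M}$ to a pro-isomorphism yields only a pro-isomorphism, hence at best that $\{ \opn{Tor}^A_q(A_k, \what{M}) \}_k$ is pro-isomorphic to $\{ \opn{H}^{-q}(\opn{K}(A;\aa^j) \ot_A \what{M}) \}_j$. Even if the latter system were shown pro-zero, you would conclude only that the Tor system has eventually vanishing transition maps, not that each $\opn{Tor}^A_q(A_k, \what{M})$ is zero, which is what Definition \ref{dfn:225} and Theorem \ref{thm:225} require; pro-vanishing is strictly weaker (the long exact sequence for $0 \to \a/\a^{k+1} \to A_k \to A_0 \to 0$ only embeds $\opn{Tor}^A_1(A_0,\what{M})$ into $(\a/\a^{k+1}) \ot_A \what{M}$, it does not kill it). Moreover the pro-vanishing of $\{ \opn{H}^{-q}(\opn{K}(A;\aa^j) \ot_A \what{M}) \}_j$ is itself not established: for fixed $k$ the systems $\{ \opn{H}^{-q}(\opn{K}(A;\aa^j) \ot_A M_k) \}_j$ are pro-zero for trivial reasons (the transition maps multiply by $a_i^{j'-j}$, which annihilate $M_k$ once $j'-j > k$), with no uniformity in $k$, so the whole difficulty is the interchange with $\lim_{\leftarrow k}$; the Mittag--Leffler property of $\{M_k\}$ controls $\mrm{R}^1{\lim}$ of the modules $M_k$ but not of the Koszul cohomologies $\opn{H}^{-q}(\opn{K}(A;\aa^j) \ot_A M_k)$, whose transition maps in $k$ need not be surjective, and weak proregularity of $\aa$ in $A$ does not directly control $\opn{K}(A;\aa^j) \ot_A A_k$, since WPR is not known to descend to quotients (Question \ref{que:140}(2)). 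Finally, Positselski's converse (Theorem \ref{thm:230}) shows the conclusion fails without WPR, so no purely formal limit/Mittag--Leffler bookkeeping can close this gap: WPR must be used quantitatively exactly at the step you label ``should yield'', and supplying that argument is the actual substance of \cite[Theorem 1.6]{Ye2}.
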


Item (1) of this theorem, coupled with Theorem \ref{thm:225}, immediately 
implies Theorem \ref{thm:227}. Item (2) of the theorem implies Theorem 
\ref{thm:226}.

\section{Weak Proregularity and Quotient Rings}

Again $A$ is some commutative ring. The purpose of this extremely technical 
section is to prove Theorem \ref{thm:105}. 

\begin{dfn} \label{dfn:106} 
An element $a \in A$ is called a {\em weakly proregular element} 
if the length one sequence $\aa := (a)$ is weakly proregular in $A$.
\end{dfn}

Given an element $a \in A$ and an $A$-module $M$, the {\em annihilator} of $a$ 
in $M$ is the submodule 
\[ \opn{Ann}_M(a) := \opn{Ker} \bigl( \opn{mult}_M(a) \bigr) = 
\{ m \in M \mid a \cd m = 0 \} \sub  M . \]
The element $a$ is called {\em $M$-regular} or a {\em non-zero-divisor on $M$}
if $\opn{Ann}_M(a) = 0$. This means that $\opn{mult}_M(a)$ is an 
injective endomorphism of $M$. When $M = A$ we just call $a$ a {\em regular 
element}. Of course 
\begin{equation} \label{eqn:102}
\opn{Ann}_M(a^i) \sub \opn{Ann}_M(a^j) \sub M
\end{equation}
if $i \leq j$. For such $i, j$ there is a homomorphism 
\begin{equation} \label{eqn:130} 
\opn{mult}_M(a^{j - i}) : \opn{Ann}_M(a^j) \to \opn{Ann}_M(a^i) . 
\end{equation}
Note that $a^0 = 1$, so that $\opn{Ann}_M(a^{0}) = 0$. The element $a$ is 
$M$-regular iff  $\opn{Ann}_M(a^{}) = 0$, iff 
$\opn{Ann}_M(a^{i}) = 0$ for all $i \in \N$.

As can be seen immediately from (\ref{eqn:101}), for every $i \in \N$
there is equality 
\begin{equation} \label{eqn:105}
\opn{Ann}_M(a^i) = \opn{H}^{-1} \bigl( \opn{K}(A; a^{i}) \ot_A M \bigr)
\end{equation}
of submodules of $M$, after we make the obvious identification 
$M \cong A \ot_A M$.

\begin{dfn} \label{dfn:100}
Let $a \in A$ and let $M$ be an $A$-module. We say that $M$ has {\em bounded 
$a$-torsion} if there is some $j_0 \in \N$ such that 
$\opn{Ann}_M(a^{j}) = \opn{Ann}_M(a^{j_0})$ for all $j \geq j_0$.

Let $\opn{tb}_{M}(a)$ be the smallest such number $j_0 \in \N$, if it exists; 
and otherwise let $\opn{tb}_{M}(a) := \infty$. The generalized number
$\opn{tb}_{M}(a) \in \N \cup \{ \infty \}$ is called the {\em torsion bound} of
$a$ on $M$. 

\end{dfn}

Thus $M$ has bounded $a$-torsion iff $\opn{tb}_{M}(a) < \infty$.
Also $a$ is an $M$-regular element iff $\opn{tb}_{M}(a) = 0$. 

The next proposition is well-known, yet we find 
it instructive to give a proof here, since the same ideas will be used in the 
subsequent lemmas.

\begin{prop} \label{prop:100}
Let $a \in A$. The following conditions are equivalent\tup{:}
\begin{itemize}
\rmitem{i} The element $a$ is WPR. 

\rmitem{ii} $A$ has bounded $a$-torsion. 
\end{itemize}
\end{prop}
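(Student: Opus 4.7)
The plan is to unwind all definitions to the point where the equivalence becomes a direct manipulation of annihilator ideals. For a length-one sequence $\aa = (a)$, the Koszul complex $\opn{K}(A; a^{i})$ is concentrated in degrees $-1$ and $0$, so the only possibly nonzero negative cohomology is in degree $-1$, where by equation (\ref{eqn:105}) we have $\opn{H}^{-1}(\opn{K}(A; a^{i})) = \opn{Ann}_{A}(a^{i})$. By the description of the transition homomorphisms in (\ref{eqn:197}), the inverse system $\bigl\{ \opn{H}^{-1}(\opn{K}(A; a^{i})) \bigr\}_{i \in \N}$ is nothing but $\bigl\{ \opn{Ann}_{A}(a^{i}) \bigr\}_{i \in \N}$ with transition maps $\opn{mult}_{A}(a^{j-i}) : \opn{Ann}_{A}(a^{j}) \to \opn{Ann}_{A}(a^{i})$. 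Thus condition (i) translates into: for every $i \in \N$ there exists $j \geq i$ with $a^{j-i} \cd \opn{Ann}_{A}(a^{j}) = 0$.

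For the implication (ii) $\Rightarrow$ (i), I would set $j_{0} := \opn{tb}_{A}(a) < \infty$ and, given $i \in \N$, pick $j := i + j_{0}$. Since $j \geq j_{0}$, Definition \ref{dfn:100} yields $\opn{Ann}_{A}(a^{j}) = \opn{Ann}_{A}(a^{j_{0}})$, and this latter module is annihilated by $a^{j_{0}}$, hence a fortiori by $a^{j-i} = a^{j_{0}}$. This verifies the pro-zero condition.

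For the harder direction (i) $\Rightarrow$ (ii), the key step is to apply the WPR hypothesis at the single index $i = 1$. This produces $j \geq 1$ for which $a^{j-1} \cd \opn{Ann}_{A}(a^{j}) = 0$, i.e.\ $\opn{Ann}_{A}(a^{j}) \sub \opn{Ann}_{A}(a^{j-1})$. Combined with the automatic reverse inclusion from (\ref{eqn:102}), this gives a one-step stabilization $\opn{Ann}_{A}(a^{j}) = \opn{Ann}_{A}(a^{j-1})$. I would then set $j_{0} := j - 1$ and argue by induction on $k - j_{0}$ that $\opn{Ann}_{A}(a^{k}) = \opn{Ann}_{A}(a^{j_{0}})$ for every $k \geq j_{0}$: given the equality for $k$, any $m \in \opn{Ann}_{A}(a^{k+1})$ satisfies $a \cd m \in \opn{Ann}_{A}(a^{k}) = \opn{Ann}_{A}(a^{j_{0}})$, so $a^{j_{0} + 1} \cd m = 0$ and therefore $m \in \opn{Ann}_{A}(a^{j_{0} + 1}) = \opn{Ann}_{A}(a^{j_{0}})$, using the base case. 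Hence $\opn{tb}_{A}(a) \leq j_{0} < \infty$.

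The only slightly subtle point is psychological: WPR is a statement quantified over all $i$, so one might expect to need it for arbitrarily large $i$; the main obstacle is recognizing that invoking it only at $i = 1$ already suffices, because a single one-step stabilization of the ascending chain $\{\opn{Ann}_{A}(a^{k})\}$ automatically propagates to full stabilization via the elementary induction above.
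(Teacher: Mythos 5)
Your proposal is correct and follows essentially the same route as the paper: for (ii) $\Rightarrow$ (i) take $j = i + \opn{tb}_A(a)$, and for (i) $\Rightarrow$ (ii) invoke weak proregularity only at $i = 1$ and propagate the resulting one-step stabilization of the chain $\{\opn{Ann}_A(a^k)\}$ by the same elementary induction (your reindexing via $j_0 = j-1$ is only cosmetic). No gaps.
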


\begin{proof} \mbox{}

\smallskip \noindent 
(ii) $\Rightarrow$ (i): Let $i_0 := \opn{tb}_{A}(a) \in \N$. 
We must prove that given $i$, there 
exists some $j \geq i$ such that the homomorphism 
\[ \opn{mult}(a^{j - i}) : \opn{Ann}_A(a^j) \to \opn{Ann}_A(a^i) \]
is the zero homomorphism. This just means that $a^{j - i}$ annihilates 
$\opn{Ann}_A(a^j)$; or in other words, taking the inclusion (\ref{eqn:102}) 
into account, that 
$\opn{Ann}_M(a^j) = \opn{Ann}_M(a^{j - i})$.  
By the choice of $i_0$, the number $j := i + i_0$ works. 

\medskip \noindent 
(i) $\Rightarrow$ (ii): By the WPR condition, with $i = 1$, there is some 
$j_0 \in \N$ such that the homomorphism 
\begin{equation} \label{eqn:106}
\opn{mult}(a^{j_0}) : \opn{Ann}_A(a^{j_0 + 1}) \to \opn{Ann}_A(a^1)
\end{equation}
is the zero homomorphism. We will prove that 
$\opn{tb}_{A}(a) \leq j_0$. This will be done as follows: by induction on 
$j \geq j_0$, we will prove that 
$\opn{Ann}_A(a^{j}) = \opn{Ann}_A(a^{j + 1})$. 

\medskip \noindent 
$\triangleright$ \, 
For $j = j_0$ this is simply the vanishing of the homomorphism (\ref{eqn:106}). 

\medskip \noindent 
$\triangleright$ \, Assume this is true for $j \geq j_0$. Consider an 
element $b \in \opn{Ann}_A(a^{j + 2})$. Then 
$a \cd b \in \opn{Ann}_A(a^{j + 1})$. By assumption 
$a \cd b \in \opn{Ann}_A(a^{j})$. Hence $b \in \opn{Ann}_A(a^{j + 1})$.
\end{proof}

\begin{lem} \label{lem:320} 
Let $b \in A$, and let 
\[ 0 \to M' \xar{\ep} M \xar{\pi} M'' \to 0 \]
be a short exact sequence in $\cat{M}(A)$. Then 
$\opn{tb}_{M}(b) \leq \opn{tb}_{M'}(b) + \opn{tb}_{M''}(b)$.
\end{lem}

\begin{proof}
Let's write $l' := \opn{tb}_{M'}(b)$ and $l'' := \opn{tb}_{M''}(b)$; and we may 
assume that $l', l'' < \infty$. 
Given some $j \geq l' + l''$ and some $m \in M$ s.t.\
$b^j \cd m = 0$, we must prove that 
$b^{l' + l''} \cd m = 0$. 

Since $b^j \cd m = 0$ in $M$, it follows that 
$b^j \cd \pi(m) = 0$ in $M''$. Now $j \geq l''$, so by the definition of 
$l'' = \opn{tb}_{M''}(b)$ we have $b^{l''} \cd \pi(m) = 0$ in $M''$.

Define $n := b^{l''} \cd m \in M$. Then $\pi(n) = 0$, i.e.\ 
$n \in \opn{Ker}(\pi)$. Let us identify $\opn{Ker}(\pi) \sub M$ with $M'$ by 
way of $\ep$. We know that $b^j \cd n = b^{j + l''} \cd m = 0$, and that $j 
\geq l'$; so by the definition of $l' = \opn{tb}_{M'}(b)$ it follows that 
$b^{l'} \cd n = 0$ in $M' \sub M$. 

Combining the calculations above we obtain 
$b^{l' + l''} \cd m = b^{l'} \cd n = 0$,
as required. 
\end{proof}

\begin{lem} \label{lem:105}
Let $a, b \in A$. For every $k \in \N$ let $A_k := A / (a^{k + 1})$.
Assume that $a$ is a regular element, and that $A_0$ has bounded $b$-torsion. 
Then $A_k$ has bounded $b$-torsion for every $k$. 
\end{lem}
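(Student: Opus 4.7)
The plan is to prove this by induction on $k$, with the base case $k = 0$ being exactly the hypothesis. For the inductive step, fix $k \geq 1$, set $j_0 := \opn{tb}_{A_0}(b)$ and $j_{k-1} := \opn{tb}_{A_{k-1}}(b)$ (both finite by hypothesis and the inductive assumption), and aim to show that $\opn{tb}_{A_k}(b) \leq j_0 + j_{k-1}$.

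The key tool is the short exact sequence of $A$-modules
$$0 \to A_0 \to A_k \to A_{k-1} \to 0,$$
whose first arrow is multiplication by $a^{k}$ (sending the class of $x$ modulo $(a)$ to the class of $a^{k} x$ modulo $(a^{k+1})$) and whose second arrow is the canonical projection. Well-definedness and exactness at $A_k$ are immediate; the injectivity of the first map is exactly where the hypothesis is used: if $a^{k} x \in (a^{k+1})$, then $a^{k}(x - a y) = 0$ for some $y$, so regularity of $a$ (hence of $a^{k}$) forces $x \in (a)$.

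The heart of the proof is a direct element chase. Suppose $y \in A$ satisfies $b^j y \in (a^{k+1})$ for some $j \geq j_0 + j_{k-1}$. Reducing modulo $(a^{k})$, the image of $y$ in $A_{k-1}$ is killed by $b^j$, so by the inductive bound it is already killed by $b^{j_{k-1}}$; write $b^{j_{k-1}} y = a^{k} u$ for some $u \in A$. Substituting into $b^j y \in (a^{k+1})$ gives $a^{k} \cd b^{j - j_{k-1}} u \in (a^{k+1})$, and canceling the non-zero-divisor $a^{k}$ yields $b^{j - j_{k-1}} u \in (a)$. Since $j - j_{k-1} \geq j_0$, the base case applied in $A_0$ forces $b^{j_0} u \in (a)$, so $b^{j_0} u = a w$ for some $w$. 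Multiplying through,
$$b^{j_0 + j_{k-1}} y \, = \, a^{k} \cd b^{j_0} u \, = \, a^{k+1} w \, \in \, (a^{k+1}).$$
Hence $\opn{Ann}_{A_k}(b^j) \subseteq \opn{Ann}_{A_k}(b^{j_0 + j_{k-1}})$ for every $j \geq j_0 + j_{k-1}$; together with the reverse inclusion this gives $\opn{tb}_{A_k}(b) \leq j_0 + j_{k-1}$, completing the induction.

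The main obstacle is really just bookkeeping: one has to organize the chase so that the resulting bound is uniform in the element $y$, and has to apply the regularity of $a$ at precisely the right moment to cancel $a^{k}$. No deeper input is needed, though one could alternatively phrase the same computation via the snake lemma applied to multiplication by $b^j$ on the above short exact sequence.
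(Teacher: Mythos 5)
Your proof is correct and follows essentially the same route as the paper's: induction on $k$ via the short exact sequence $0 \to A_0 \to A_k \to A_{k-1} \to 0$ (the paper phrases the kernel as $N_k = \a^k/\a^{k+1} \cong A_0$, using regularity of $a$), with the same two-stage annihilator chase giving the bound $\opn{tb}_{A_k}(b) \leq (k+1)\cd \opn{tb}_{A_0}(b)$. The only difference is cosmetic: you cancel $a^k$ element-wise where the paper invokes the isomorphism $N_k \cong A_0$.
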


\begin{proof}
The assumption is that $l := \opn{tb}_{A_0}(b) < \infty$. 
We shall prove that 
\begin{equation} \label{eqn:120}
\opn{tb}_{A_k}(b) \leq (k + 1) \cd l 
\end{equation}
for all $k \geq 0$.
 
Define the ideal $\a := (a) \sub A$.
For  $k \geq 1$ define the $A$-module
\[ N_k := \a^k / \a^{k + 1} \cong \opn{Ker}(A_k \xar{\pi_k} A_{k - 1}) , \]
where $\pi_k : A_k \xar{} A_{k - 1}$ is the $A$-ring homomorphism. 
And let $N_0 := A_0$. 
Since $a$ is a regular element of $A$, it follows that 
$\opn{mult}_A(a) : \a^k \to \a^{k + 1}$
is bijective for all $k \geq 0$; and hence it induces an 
$A$-module isomorphism $N_k \iso N_{k + 1}$. We see that  
$\opn{tb}_{N_k}(b) = l$ for all $k \geq 0 $. 

We shall now prove (\ref{eqn:120}), by induction on $k$.
For $k = 0$ there is nothing to prove.
Next, take any $k \geq 0$, and assume formula (\ref{eqn:120}) holds for $k$. 
Consider the short exact sequence of $A$-modules 
\begin{equation} \label{eqn:121}
0 \to N_{k + 1} \to  A_{k + 1} \xar{\pi_{k + 1}} A_k \to 0 . 
\end{equation}
According to Lemma \ref{lem:320}, applied to this short exact sequence, we 
obtain 
\[ \opn{tb}_{A_{k + 1}}(b) \leq 
\opn{tb}_{N_{k + 1}}(b) + \opn{tb}_{A_{k}}(b) \leq 
l + (k + 1) \cd l = (k + 2) \cd l . \]
This is the inequality (\ref{eqn:120}) for $k + 1$.
\end{proof}

\begin{thm} \label{thm:105}
Let $A$ be a commutative ring, and let $a, b \in A$. Assume that $a$ is 
a regular element of $A$, and the image $\bar{b}$ of $b$ in 
$\mbar{A} := A / (a)$ is a weakly proregular element. Then the length $2$ 
sequence $\aa := (a, b)$ in $A$ is weakly proregular.
\end{thm}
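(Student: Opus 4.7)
The plan is to analyze the Koszul complex $\opn{K}(A;(a^i,b^i)) = \opn{K}(A;a^i) \ot_A \opn{K}(A;b^i)$ directly, using the regularity of $a$ to kill one cohomology group entirely and to identify the other as an annihilator module; then invoke Lemma \ref{lem:105} to bound $b$-torsion in every quotient $A/(a^k)$ uniformly enough to force the relevant inverse system to be pro-zero.

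First I would observe that $\opn{H}^{-2}(\opn{K}(A;(a^i,b^i))) = 0$ for every $i$: the degree $-2$ differential is essentially $(a^i, -b^i)$ (up to sign), whose kernel is $\opn{Ann}_A(a^i) \cap \opn{Ann}_A(b^i) \sub \opn{Ann}_A(a^i) = 0$ since $a$ is regular. Next I would identify $\opn{H}^{-1}$: tensoring the canonical short exact sequence $0 \to A\langle 0 \rangle \to \opn{K}(A;b^i) \to A\langle -1 \rangle \to 0$ with $\opn{K}(A;a^i)$ yields
\[ 0 \to \opn{K}(A;a^i) \to \opn{K}(A;(a^i,b^i)) \to \opn{K}(A;a^i)[1] \to 0 , \]
and because $\opn{H}^{-1}(\opn{K}(A;a^i)) = \opn{Ann}_A(a^i) = 0$, the long exact sequence reduces to
\[ 0 \to \opn{H}^{-1}(\opn{K}(A;(a^i,b^i))) \to A/(a^i) \xrightarrow{\, b^i \,} A/(a^i) , \]
producing a canonical isomorphism $\opn{H}^{-1}(\opn{K}(A;(a^i,b^i))) \cong \opn{Ann}_{A/(a^i)}(b^i)$. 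A direct calculation on Koszul cocycles (the transition $\mu_{j,i}$ multiplies the $e_1 \ot 1$-coordinate by $a^{j-i}$ and the $1 \ot e_2$-coordinate by $b^{j-i}$) shows that, under this identification, the transition map
\[ \opn{Ann}_{A/(a^j)}(b^j) \to \opn{Ann}_{A/(a^i)}(b^i) \]
is given by $\bar y \mapsto \overline{b^{j-i} \cd y}$, i.e.\ reduce modulo $a^i$ and multiply by $b^{j-i}$.

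Finally I would conclude pro-zeroness using the torsion-bound machinery of the previous lemmas. Since $\bar b \in A/(a)$ is weakly proregular, Proposition \ref{prop:100} provides a finite bound $l := \opn{tb}_{A/(a)}(b) < \infty$, and Lemma \ref{lem:105} extends this to $\opn{tb}_{A/(a^k)}(b) \le k \cd l$ for every $k \geq 1$. Given $i \geq 1$, I would choose $j := i(l+1)$. For any $\bar y \in \opn{Ann}_{A/(a^j)}(b^j)$ with lift $y \in A$, reducing modulo $a^i$ gives $b^j \cd \bar y = 0$ in $A/(a^i)$; since $j \ge i \cd l$, the torsion bound in $A/(a^i)$ yields $b^{i l} \cd \bar y = 0$ in $A/(a^i)$, hence $b^{j-i} \cd \bar y = b^{(j - i) - i l} \cd b^{i l} \cd \bar y = 0$ because $j - i = i l$. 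So the transition map vanishes, and combined with Step 1 every negative-degree inverse system $\bigl\{ \opn{H}^q(\opn{K}(A; (a^i, b^i))) \bigr\}$ is pro-zero, proving that $\aa = (a,b)$ is WPR.

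The main obstacle I anticipate is the bookkeeping around the transition map: one must keep track of the factor $b^{j-i}$ that the transition on $\opn{K}(A;b^i)$ contributes to the quotient piece in the SES, since the naive reading of the LES might suggest the transition on $\opn{H}^{-1}$ is merely reduction $A/(a^j) \to A/(a^i)$, whereas the correct answer picks up exactly the power of $b$ that makes the torsion bounds of Lemma \ref{lem:105} sufficient to force pro-vanishing.
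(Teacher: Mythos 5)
Your proof is correct and follows essentially the same path as the paper's: both arguments kill $\opn{H}^{-2}$ using the regularity of $a$, identify $\opn{H}^{-1}\bigl(\opn{K}(A;(a^i,b^i))\bigr) \cong \opn{Ann}_{A/(a^i)}(b^i)$ with transition maps given by reduction modulo $a^i$ followed by multiplication by $b^{j-i}$, and then invoke Proposition \ref{prop:100} and Lemma \ref{lem:105} to obtain the torsion bounds that force pro-vanishing. The only difference is cosmetic: the paper obtains the identification via the quasi-isomorphism $\opn{K}(A;(a^i,b^i)) \to \opn{K}(A/(a^i); b^i)$ induced by the augmentation of $\opn{K}(A;a^i)$, whereas you use the short exact sequence of Koszul complexes and its long exact sequence.
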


\begin{proof}
The Koszul complexes $\opn{K}(A; \aa^i)$ are concentrated in degrees 
$0, -1, -2$. We need to prove that for $q = -1, -2$ and for  
$i \geq 0$ there exists some $j \geq i$ for which the homomorphism 
\begin{equation} \label{eqn:133}
\opn{H}^q(\mu_{j, i}) : \opn{H}^q \bigl( \opn{K}(A; \aa^j) \bigr) \to
\opn{H}^q \bigl( \opn{K}(A; \aa^i) \bigr) 
\end{equation}
is zero. We will consider all these cases of $q$ and $i$ in three steps. 
We shall use the fact that 
\begin{equation} \label{eqn:134}
\opn{K}(A; \aa^i) = \opn{K}(A; a^i) \ot_A \opn{K}(A; b^i)
\end{equation}
as complexes.

\medskip \noindent 
Step 1. Here we deal with $i = 0$. 
Since $a^0 = b^0 = 1$, the complexes 
$\opn{K}(A; a^0)$ and $\opn{K}(A; b^0)$ are acyclic. 
From (\ref{eqn:134}) we conclude that the 
complex $\opn{K}(A; \aa^0)$ is acyclic. Hence the homomorphism 
(\ref{eqn:133}) is zero for all $q$ and all $j \geq 0$. 

\medskip \noindent 
Step 2. Here we deal with $q = -2$. Take any $i \geq 1$. 
The regularity of $a$ implies that 
$\opn{H}^{-1} \bigl( \opn{K}(A; a^i) \bigr) = 0$, 
so the augmentation homomorphism
\begin{equation} \label{eqn:123}
\opn{K}(A; a^i) \to \opn{H}^{0} \bigl( \opn{K}(A; a^i) \bigr)
= A / (a^i) = A_{i - 1} 
\end{equation}
is a quasi-isomorphism. From (\ref{eqn:134}), with the fact that 
$\opn{K}(A; b^i)$ is a K-flat complex, we see that the homomorphism of 
complexes 
\begin{equation} \label{eqn:240}
\opn{K}(A; \aa^i) \to A_{i - 1} \ot_A \opn{K}(A; b^i)
\end{equation}
induced by (\ref{eqn:123}) is a quasi-isomorphism. But 
\begin{equation} \label{eqn:241}
A_{i - 1} \ot_A \opn{K}(A; b^i) \cong \opn{K}(A_{i - 1}; b^i)
\end{equation}
canonically as complexes of $A$-modules.
The complex $\opn{K}(A_{i - 1}; b^i)$ is concentrated in degrees $-1, 0$. 
It follows that 
$\opn{H}^{-2} \bigl( \opn{K}(A; \aa^i) \bigr) = 0$,
and hence the homomorphism (\ref{eqn:133}) is zero for $q = -2$ and all 
$i \geq 1$. 

\medskip \noindent 
Step 3. Here we handle the case $q = -1$ and $i \geq 1$. For every 
$j \geq i \geq 1$ the ring homomorphism 
$\pi : A_{j - 1} \to A_{i - 1}$
induces a homomorphism of complexes 
\[  \opn{K}(\pi; b^j) : \opn{K}(A_{j - 1}; b^j) \to 
\opn{K}(A_{i - 1}; b^j) . \]
Taking the $q = -1$ cohomology in (\ref{eqn:240}) and (\ref{eqn:241}), we 
obtain a canonical isomorphism of $A$-modules 
\[ \opn{H}^{-1} \bigl( \opn{K}(A; \aa^j) \bigr) \cong 
\opn{H}^{-1} \bigl( \opn{K}(A_{j - 1}; b^j) \bigr) . \]
Likewise with $i$ instead of $j$. These, with the isomorphism 
(\ref{eqn:105}), make the diagram 
\begin{equation} \label{eqn:124}
\UseTips \xymatrix @C=6ex @R=6ex {
\opn{H}^{-1} \bigl( \opn{K}(A; \aa^j) \bigr)
\ar[d]_{ \opn{H}^{-1}(\mu_{j, i}) }
\ar[r]^(0.45){\simeq}
&
\opn{H}^{-1} \bigl( \opn{K}(A_{j - 1}; b^j) \bigr) 
\ar[d]_{ \opn{H}^{-1}(\mu_{j, i} \, \circ \, \opn{K}(\pi; b^j)) }
\ar[r]^(0.55){\simeq}
&
\opn{Ann}_{A_{j - 1}}(b^j)
\ar[d]_{ \opn{mult}(b^{j - i}) \, \circ \, \pi }
\\
\opn{H}^{-1} \bigl( \opn{K}(A; \aa^i) \bigr)
\ar[r]^(0.45){\simeq}
&
\opn{H}^{-1} \bigl( \opn{K}(A_{i - 1}; b^i) \bigr) 
\ar[r]^(0.55){\simeq}
&
\opn{Ann}_{A_{i - 1}}(b^i)
} 
\end{equation}
commutative. The rightmost column factors into this commutative diagram:
\begin{equation} \label{eqn:127}
\UseTips \xymatrix @C=6ex @R=6ex {
\opn{Ann}_{A_{j - 1}}(b^j)
\ar[r]^{ \pi }
\ar[dr]_{ \opn{mult}(b^{j - i}) \, \circ \, \pi \ }
&
\opn{Ann}_{A_{i - 1}}(b^j)
\ar[d]^{ \opn{mult}_{A_{i - 1}}(b^{j - i}) }
\\
&
\opn{Ann}_{A_{i - 1}}(b^i)
}
\end{equation}

Let's fix $i \geq 1$ now. By Lemma \ref{lem:105} the $b$-torsion on 
$A_{i - 1}$ is bounded, i.e.\ 
$\opn{tb}_{A_{i - 1}}(b) < \infty$. This implies that for every 
$j \geq i + \opn{tb}_{A_{i - 1}}(b)$
the vertical arrow in (\ref{eqn:127}) is zero. Going back to 
diagram (\ref{eqn:124}), we see that the leftmost vertical arrow in it is zero 
for every $j \geq i + \opn{tb}_{A_{i - 1}}(b)$.
The conclusion is that the homomorphism (\ref{eqn:133}) is zero 
for $q = -1$ and $j \gg i$.
\end{proof}

\begin{rem} \label{rem:320}
In an earlier version of this paper we asked the following three questions, 
which are variations of Theorem \ref{thm:105}.
Let $\a$ and $\b$ be finitely generated ideals in the ring $A$. Define the ring 
$\mbar{A} := A / \a$ and the ideal $\bar{\b} := \b \cd \mbar{A} \sub \mbar{A}$.
\begin{enumerate}
\item[(Q1)] Suppose $\a$ and $\b$ are WPR. Is the ideal $\a + \b \sub A$ WPR?

\item[(Q2)] Suppose $\a$ and $\b$ are WPR. Is the ideal 
$\bar{\b} \sub \mbar{A}$ WPR?

\item[(Q3)] (Disegni) Assume that $\a \sub A$ and $\bar{\b} \sub \mbar{A}$ are 
WPR ideals. Is the ideal $\a + \b \sub A$ WPR?
\end{enumerate}

Since then, L. Positselski (in a private communication) provided us with 
counterexamples to the first two questions, which we now reproduce. 
Consider a ring $C$, with an element $c \in C$ that is not weakly proregular 
(there are known examples of this in the literature). 
By Corollary \ref{cor:140}, the ideal 
$\c := (c) \sub C$ is not WPR. Let 
$A := C[t]$, the polynomial ring in the variable $t$, so that 
$A \cong C \ot_{\Z} D$, where $D := \Z[t]$. Define the element
$b := c + t \in A$, and the principal ideals $\a := (t)$ and $\b := (b)$ in 
$A$. The elements $t, b \in A$ are regular, and hence the ideals $\a, \b \sub A$ 
are WPR. 

The ring $\bar{A} = A / \a$ is isomorphic to $C$, and the ideal 
$\bar{\b} = \b \cd \bar{A} \sub \bar{A}$ is sent to the ideal 
$\c \sub C$ under this isomorphism. By assumption the ideal 
$\c \sub C$ is not WPR. This is a counterexample to (Q2). 

Next let's consider the ideal 
$\a + \b \sub A$. It is easy to see that the ideal $\a + \b$ is generated by 
the sequence $(t, c)$. We will prove that the sequence $(t, c)$ in $A$ is not 
WPR, implying that the ideal $\a + \b$ is not WPR, thus providing a 
counterexample to (Q1). 

For $k \geq 0$ define 
$A_k := A / \a^{k + 1} = A / (t ^{k + 1})$
and $D_k :=  D / (t ^{k + 1}) = \Z[t] / (t ^{k + 1})$. There is a canonical 
ring isomorphism 
$A_k \cong C \ot_{\Z} D_k$, and a canonical isomorphism of complexes
\begin{equation} \label{eqn:322}
\opn{K}(A; t^{k + 1}, c^{k + 1}) \cong 
\opn{K}(D; t^{k + 1}) \ot_{\Z} \opn{K}(C; c^{k + 1}) .
\end{equation}
The canonical homomorphism $\opn{K}(D; t^{k + 1}) \to D_k$ is a 
quasi-isomorphism. Due to the flatness of $D$ and $D_k$ over $\Z$ we get a 
canonical isomorphism of $C$-modules 
\begin{equation} \label{eqn:321}
\opn{H}^{-1} \bigl( \opn{K}(D; t^{k + 1}) \ot_{\Z} \opn{K}(C; c^{k + 1}) 
\bigr) \cong 
D_k \ot_{\Z} \opn{H}^{-1} \bigl( \opn{K}(C; c^{k + 1}) \bigr) .
\end{equation}
Because the homomorphisms $D_{k + 1} \to D_k$ are surjective, and the 
inverse system
\[ \bigl\{ \opn{H}^{-1} \bigl( \opn{K}(C; c^{k + 1}) \bigr) \bigr\}_{k \in \N} 
\]
is not pro-zero, it follows that the inverse system (\ref{eqn:321}) is not 
pro-zero. Combining this with the canonical isomorphisms (\ref{eqn:322}) we 
conclude that the sequence $(t, c)$ in $A$ is not WPR. 

As for (Q3): it is still open. 
\end{rem}

\section{The Local Nature of Weak Proregularity}

As before, $A$ is a commutative ring. 
A sequence of elements $\ss = (s_1, \ldots, s_n)$ in $A$ is called a {\em 
covering sequence} if
$\opn{Spec}(A) = \bigcup\nolimits_{i} \opn{Spec}(A_{s_i})$.
Here $A_{s_i} = A[s_i^{-1}]$, the localized ring. 
Clearly $\ss$ is a covering sequence iff 
$\sum_{i = 1, \ldots, n} A \cd s_i = A$.

\begin{thm} \label{thm:306} 
Let $A$ be a ring and let $\a \sub A$ be an ideal.
The following two conditions are equivalent\tup{:}
\begin{itemize}
\rmitem{i} The ideal $\a$ is weakly proregular.  

\rmitem{ii} There is a covering sequence $\ss = (s_1, \ldots, s_n)$  of $A$, 
such that for every $i$ the ideal 
$\a_{s_i} := A_{s_i} \ot_A \a \sub A_{s_i}$
is weakly proregular. 
\end{itemize}
\end{thm}

\begin{proof} \mbox{} 

\smallskip \noindent 
(i) $\Rightarrow$ (ii): This is trivial, since we can take the covering sequence 
$\ss$ of $A$ with $n = 1$ and $s_1 = 1 \in A$.

\medskip \noindent
(ii) $\Rightarrow$ (i): 
For $i = 1, \ldots, n$ let $g_i : A \to A_{s_i}$ be the localization ring 
homomorphism. For every $i$ there is some WPR sequence 
$\aa_i$ in $A_{s_i}$ that generates the ideal $\a_{s_i}$.
Now the element $a_{i,j} \in A_{s_i}$, the $j$-th element in the sequence 
$\aa_i$, is of the form $a_{i, j} = g_i(b_{i, j}) \cd s_i^{e_{i, j}}$
for some $b_{i, j} \in \a$ and $e_{i, j} \leq 0$. 
Define the finite sequence
$\bb_i := (b_{i, 1}, \ldots)$ in $\a$. 
We see that the 
sequence $g_i(\bb_i) := \bigl( g_i(b_{i, 1}), \ldots \bigr)$ also generates the 
ideal $\a_{s_i} \sub A_{s_i}$.

Define the finite sequence 
$\bb := \bb_1 \smallsmile \bb_2 \smallsmile \cdots \smallsmile \bb_n$,
the concatenation of the $\bb_i$. Since for every $i = 1, \ldots, n$ the 
sequence $g_i(\bb)$ in $A_{s_i}$ generates the ideal $\a_{s_i}$, it follows 
that the sequence $\bb$ generates the ideal  
$\a \sub A$. It remains to prove that $\bb$ is WPR. 

Fix $p < 0$ and $j \geq 0$. For every $k \geq j$ we consider the  homomorphism
\begin{equation} \label{eqn:60}
\opn{H}^p(\mu_{k, j}) :
\opn{H}^p \bigl( \opn{K}(A; \bsym{b}^k) \bigr) \to 
\opn{H}^p \bigl( \opn{K}(A; \bsym{b}^j) \bigr) . 
\end{equation}
Because the sequence $g_i(\bb)$ in $A_{s_i}$ generates the 
WPR ideal $\a_{s_i}$, Corollary \ref{cor:140} says that this is a WPR 
sequence. Therefore there is some 
$k_i \geq j$ such that the homomorphism
\begin{equation} \label{eqn:325}
\opn{H}^p(\mu_{k_i, j}) : 
\opn{H}^p \bigl( \opn{K}( A_{s_i}; g_i(\bb)^{k_i}) \bigr) \to 
\opn{H}^p \bigl( \opn{K}( A_{s_i}; g_i(\bb)^j) \bigr)
\end{equation}
is zero. Now the homomorphism (\ref{eqn:325}) is gotten from the homomorphism 
(\ref{eqn:60}) by replacing $k$ with $k_i$, and applying the functor 
$A_{s_i} \ot_A (-)$. Hence, taking $k := \opn{max} (k_1, \ldots, k_n)$, 
the homomorphism (\ref{eqn:60}) is zero. So the sequence $\bb$ is WPR. 
\end{proof}

\begin{exa} \label{exa:115}
Suppose the ideal $I \sub A$ defines an {\em effective Cartier divisor} on 
$\opn{Spec}(A)$. This means  
(see \cite[Section
{\href{https://stacks.math.columbia.edu/tag/01WQ}{tag=\texttt{01WQ}}}]{SP}) 
that there is some covering sequence
$(s_1, \ldots, s_n)$ of $A$, such that each of the ideals 
$I_{s_i} := A_{s_i} \ot_A I \sub A_{s_i}$ 
is generated by a single regular element.  
So the ideals $I_{s_i} \sub A_{s_i}$ are all WPR. By Theorem \ref{thm:306} 
it follows that the ideal $I \sub A$ is WPR.
\end{exa}

\section{Weak Proregularity and Prisms} \label{sec:prisms}

For a prime number $p$ let $\Z_p \sub \Q$ denote the local ring of $\Z$ at $p$; 
i.e.\ $\Z_p = \Z_{\p}$ where $\p := (p) \in \opn{Spec}(\Z)$. (This is not to be 
confused with the complete local ring $\what{\Z}_p = \what{\Z}_{\p}$.)

\begin{dfn}[\cite{BS2}] \label{dfn:135}
A {\em $p$-adic prism} is a pair $(A, I)$, where $A$ is a $\Z_p$-ring, and 
$I \sub A$ is an ideal.
The conditions are:
\begin{itemize}
\rmitem{i} The ideal $I$ defines an effective Cartier divisor on 
$\opn{Spec}(A)$. 

\rmitem{ii} The ring $A$ is sequentially derived $\a$-adically complete, where 
$\a := I + (p) \sub A$. 

\rmitem{iii} A condition about a generalized Frobenius lift,  which is not 
relevant to our discussion.
\end{itemize}
\end{dfn}

The ring $A$ is not assumed to be noetherian. However the ideal $I$ is WPR, as 
can be seen in Example \ref{exa:115}. Therefore the ideal $\a$ is finitely 
generated. 

\begin{dfn}[\cite{BS2}] \label{dfn:136}
A $p$-adic prism $(A, I)$ is called {\em bounded} if the ring $A / I$ has 
bounded $p$-torsion.
\end{dfn}

\begin{thm} \label{thm:135}
Suppose $(A, I)$ is a bounded $p$-adic prism. Then the ideal 
$\a := I + (p) \sub A$ is weakly proregular.
\end{thm}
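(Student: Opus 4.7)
The plan is to combine the local criterion for weak proregularity (Theorem \ref{thm:306}) with the quotient criterion of Theorem \ref{thm:105}, and use Proposition \ref{prop:100} to translate the bounded $p$-torsion hypothesis into a WPR statement modulo $I$.

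First, since $I$ defines an effective Cartier divisor on $\opn{Spec}(A)$, there exists a covering sequence $\ss = (s_1, \ldots, s_n)$ of $A$ such that for each $i$ the localized ideal $I_{s_i} := A_{s_i} \ot_A I \sub A_{s_i}$ is generated by a single regular element $a_i \in A_{s_i}$. By Theorem \ref{thm:306}, the ideal $\a = I + (p)$ is WPR if and only if each localized ideal $\a_{s_i} = I_{s_i} + (p) = (a_i, p) \sub A_{s_i}$ is WPR. So it suffices to prove that the length-$2$ sequence $(a_i, p)$ is weakly proregular in $A_{s_i}$ for each $i$.

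To apply Theorem \ref{thm:105} with $A$ replaced by $A_{s_i}$, $a := a_i$ and $b := p$, I need $a_i$ to be a regular element of $A_{s_i}$ (which is the case by construction) and the image $\bar{p}$ of $p$ in $A_{s_i}/(a_i) \cong (A/I)_{s_i}$ to be a weakly proregular element. By Proposition \ref{prop:100}, this last condition is equivalent to $(A/I)_{s_i}$ having bounded $p$-torsion.

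Thus the last step is the only potential obstacle: show that bounded $p$-torsion on $A/I$ (given in Definition \ref{dfn:136}) is preserved under the flat localization $A/I \to (A/I)_{s_i}$. Set $j_0 := \opn{tb}_{A/I}(p) < \infty$. Given $x \in (A/I)_{s_i}$ with $p^j \cd x = 0$ for some $j \geq j_0$, write $x = y / s_i^k$ with $y \in A/I$; then $s_i^m \cd p^j \cd y = 0$ in $A/I$ for some $m$, so by definition of $j_0$ we have $s_i^m \cd p^{j_0} \cd y = 0$ in $A/I$, which gives $p^{j_0} \cd x = 0$ in $(A/I)_{s_i}$. Hence $\opn{tb}_{(A/I)_{s_i}}(p) \leq j_0 < \infty$. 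Applying Proposition \ref{prop:100}, Theorem \ref{thm:105}, and Theorem \ref{thm:306} in this order completes the proof.
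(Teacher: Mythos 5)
Your proof is correct and follows essentially the same route as the paper: localize along the covering sequence furnished by the effective Cartier divisor, apply Theorem \ref{thm:105} to the pair (regular generator of $I_{s_i}$, $p$), translate bounded $p$-torsion into weak proregularity of an element via Proposition \ref{prop:100}, and reassemble with Theorem \ref{thm:306}. The only (harmless) variation is that you check directly, by an elementary element computation, that bounded $p$-torsion of $A/I$ passes to the localizations $(A/I)_{s_i}$, whereas the paper reaches the same point by noting that $\bar{p}$ is WPR in $A/I$, localizing the WPR ideal $(\bar{p})$ via Theorem \ref{thm:306} and Proposition \ref{prop:310}, and invoking Corollary \ref{cor:140}.
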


\begin{proof}
Choose a covering sequence $(s_1, \ldots, s_n)$ of $A$, such that for every $k$
the ideal $I_{s_k} := A_{s_k} \ot_A I \sub A_{s_k}$
is generated by a single regular element $b_k \in A_{s_k}$.

Write $\p := (p) \sub A$. 
Let $\mbar{A} := A / I$, 
and let $\mbar{p} \in \mbar{A}$ be the image of $p$.
By assumption the ring $\mbar{A}$ has bounded  $\mbar{p}$-torsion, 
so by Proposition \ref{prop:100} the element $\mbar{p} \in \mbar{A}$ is WPR.

Fix some index $k$. Let $g_k : A \to A_{s_k}$ and
$\bar{g}_k : \mbar{A} \to \mbar{A}_{s_k}$ be the localization ring 
homomorphisms, and define the elements
$p_{k} := g_k(p) \in A_{s_k}$ and 
$\mbar{p}_{k} := \bar{g}_k(\mbar{p}) \in \mbar{A}_{s_k}$.
Note that $\mbar{A}_{s_k} = A_{s_k} / I_{s_k}$, 
and the element $\mbar{p}_{k}$ is the image of the element $p_k$ under the 
surjection $A_{s_k} \to \mbar{A}_{s_k}$. 
Since $\bar{g}_k$ is a flat homomorphism, according to Proposition 
\ref{prop:310} the element 
$\mbar{p}_{k} \in \mbar{A}_{s_k}$ is WPR. 
We see that the pair of elements $(b_k, p_{k})$ in the ring $A_{s_k}$
satisfies the conditions of Theorem \ref{thm:105}. Therefore 
the pair $(b_k, p_{k})$ is a WPR sequence in $A_{s_k}$, and 
the ideal $\a_{s_k} = I_{s_k} + \p_{s_k}\sub A_{s_k}$ that this pair generates 
is WPR.

Finally, by Theorem \ref{thm:306} we conclude that the ideal 
$\a \sub A$ is WPR.
\end{proof}


\end{document}